\documentclass[11pt,a4paper, british]{amsart}

\usepackage{babel}
\usepackage[useregional]{datetime2}
\DTMlangsetup[]{en-GB}

\usepackage{amssymb}
\usepackage[hidelinks]{hyperref}

\usepackage{tikz}
\usepackage{caption}
\usetikzlibrary{arrows}

\pgfmathsetmacro{\myxlow}{-2}
\pgfmathsetmacro{\myxhigh}{2}
\pgfmathsetmacro{\myiterations}{3}

\usepackage{tikz-cd}

\newtheorem{thm}{Theorem}[section]
\newtheorem{prop}[thm]{Proposition}
\newtheorem{lem}[thm]{Lemma}
\newtheorem{cor}[thm]{Corollary}

\theoremstyle{definition}
\newtheorem{definition}[thm]{Definition}

\theoremstyle{remark}
\newtheorem{remark}[thm]{Remark}

\numberwithin{equation}{section}

\newcommand{\G}{\mathbb{G}_\mathrm{m}}  
\newcommand{\alg}{\overline{\mathbb{Q}}} 
\newcommand{\R}{\mathbb{R}}  
\newcommand{\Q}{\mathbb{Q}} 
\newcommand{\Z}{\mathbb{Z}} 
\newcommand{\C}{\mathbb{C}} 
\newcommand{\h}{\mathbb{H}} 
\newcommand{\SL}{\mathrm{SL}_2(\Z)} 
\newcommand{\Gal}{\mathrm{Gal}} 
\newcommand{\AN}{\mathbb{A}^n_\C}

\DeclareMathOperator{\cl}{cl} 

\begin{document}
	
	\title{A uniform effective André--Oort result}
	\author{Guy Fowler}
	\address{Department of Mathematics, University of Manchester, Manchester, UK
		\newline
		\indent
		Heilbronn Institute for Mathematical Research, Bristol, UK}
	\email{\href{mailto:guy.fowler@manchester.ac.uk}{guy.fowler@manchester.ac.uk}}
	\urladdr{\url{https://www.guyfowler.uk/}}
	\date{\today}
	\subjclass[2020]{11G18, 14G35}
	
		\begin{abstract}
				We prove the André--Oort conjecture for hypersurfaces $V \subset Y(1)^n \cong \mathbb{A}^n_\C$ defined by an equation $a_1 x_1^m + \ldots + a_n x_n^m = b$, where $a_1, \ldots, a_n, b \in \alg$ and $m \in \Z_{>0}$.
				Unlike previous proofs, our result is both effective and uniform in the height of the coefficients $a_1, \ldots, a_n, b$.
				This is the first effective proof of a uniform André--Oort statement for a class of subvarieties with arbitrary dimension and non-empty special locus.
				We also prove an analogous result for hypersurfaces $V \subset Y(1)^n \times \G^l$.
			\end{abstract}
	
	\maketitle
	
	\section{Introduction}
	
	Let $X$ be a Shimura variety.
	Then $X$ has an associated collection of \textit{special subvarieties}.
	The Andr\'e--Oort conjecture, now a theorem \cite{PilaShankarTsimerman21}, states that, given a subvariety $V \subset X$, the \textit{special locus} of $V$ (i.e.~the union of all the special subvarieties contained in $V$) is equal to a finite union of special subvarieties.
	In particular, if $V^\mathrm{sp}$ denotes the union of all the positive-dimensional special subvarieties contained in $V$, then the set $V \setminus V^\mathrm{sp}$ contains only finitely many \textit{special points}, i.e.~special subvarieties of dimension $0$.
	
	In this article, we consider the case where $X = Y(1)^n$, a Cartesian power of the modular curve $Y(1) = \SL \backslash \h$.
	We identify $Y(1)^n$ with $\mathbb{A}^n_\C$ by means of the isomorphism given by the $j$-invariant.
	A special point is then a point $(x_1, \ldots, x_n) \in \mathbb{A}^n_\C$ for which $x_1, \ldots, x_n$ are all \textit{singular moduli}, i.e.~$j$-invariants of elliptic curves with complex multiplication (CM).
	The positive-dimensional special subvarieties of $\AN$ are described in Section~\ref{sec:specials}.
	
	The \textit{discriminant} of a singular modulus is defined to be the discriminant of the endomorphism order of a corresponding CM elliptic curve.
The discriminant is a negative integer. 
There are only finitely many singular moduli of a given discriminant, and these may be found effectively.
	
	Pila \cite{Pila11} proved the Andr\'e--Oort conjecture for $\AN$.
	The $n = 2$ case had been proved earlier by Andr\'e \cite{Andre98}.
	Pila's proof applied the Pila--Zannier strategy \cite{PilaZannier08} of o-minimal point counting.
Notably, this approach yields a very uniform version of Andr\'e--Oort, as illustrated by the following result.
	
	\begin{thm}[{\cite[Theorem~13.2]{Pila11}}]\label{thm:Pila}
		Let $m, n, d \in \Z_{>0}$.
		There exists a constant $c(m ,n, d)$ with the following property.
		Let $V \subset \AN$ be a hypersurface of degree $m$ defined over a number field $K$ with $[K : \Q] \leq d$.
		If $(x_1, \ldots, x_n) \in V \setminus V^\mathrm{sp}$ is a special point and $\Delta_i$ denotes the discriminant of $x_i$, then
		\[ \max_{1 \leq i \leq n} \lvert \Delta_i \rvert \leq c(m, n, d).\]
	\end{thm}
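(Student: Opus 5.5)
We follow Pila's o-minimal strategy and make it uniform by working with definable families. Consider the family of all hypersurfaces of degree at most $m$ in $\AN$. It is parametrised by the coefficient vector $a \in \C^N$, with $N = \binom{m+n}{n}$. Restrict the $j$-function to the standard fundamental domain $F$. Then
\[ Z_a = \{ (z_1,\ldots,z_n) \in F^n : P_a(j(z_1),\ldots,j(z_n)) = 0 \} \]
is a family of sets definable in $\R_{\mathrm{an},\exp}$, with parameter $a$. The special points of $V_a$ correspond to points of $Z_a$ whose coordinates are imaginary quadratic. These have height $\ll \lvert\Delta_i\rvert$ as algebraic points of degree $2$.

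The first step is to apply the Pila--Wilkie counting theorem for definable families, in its degree-$2$ ("semialgebraic blocks") form. For every $\epsilon>0$ this gives $c(\epsilon)$, \emph{independent of $a$}, with the following property. The points of $Z_a$ with quadratic coordinates of height $\le T$ lie on at most $c(\epsilon) T^\epsilon$ connected semialgebraic blocks (or positive-dimensional semialgebraic subsets) inside $Z_a$.

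The second step is the Galois lower bound. Let $x=(x_1,\ldots,x_n) \in V\setminus V^{\mathrm{sp}}$ be special, and let $\Delta=\max\lvert\Delta_i\rvert$. Since $V$ is defined over $K$ with $[K:\Q]\le d$, every $\Gal(\alg/K)$-conjugate of $x$ also lies in $V\setminus V^{\mathrm{sp}}$. The class number bound $h(\Delta_i)\gg_\epsilon \lvert\Delta_i\rvert^{1/2-\epsilon}$ (Siegel) then yields at least $\gg d^{-1}\Delta^{1/4}$ such conjugates. Siegel's bound is ineffective, which is consistent with the noneffective constant here. Each conjugate has a preimage in $F^n$ of height $\ll \Delta$.

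The third step, the Ax--Lindemann part, is where the geometry enters. Any connected semialgebraic set $B\subset Z_a$ of positive dimension has Zariski closure whose image under $j$ is contained in a positive-dimensional weakly special subvariety of $V_a$. This follows from Ax--Lindemann--Weierstrass for $j$, proved by Pila. The weakly special subvarieties containing a special point are special, so such points lie in $V^{\mathrm{sp}}$.

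Hence the points of $Z_a$ coming from $V\setminus V^{\mathrm{sp}}$ lie on blocks of dimension zero, i.e.\ they are individual points. Their number is therefore at most $c(\epsilon)\Delta^{\epsilon}$, uniformly in $a$. To make this rigorous uniformly, we invoke the family version of the counting theorem together with a uniform bound on the number of connected components of fibres. Comparing with the Galois lower bound $\Delta^{1/4} \ll c(\epsilon)\Delta^\epsilon$ and taking $\epsilon=1/8$ bounds $\Delta$ by a constant depending only on $m,n,d$.

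The main obstacle is uniformity in the third step. Ax--Lindemann must be applied inside the family, and "removing $V^{\mathrm{sp}}$" must be uniform in $a$. One needs that the algebraic part of $Z_a$, minus preimages of $V^{\mathrm{sp}}$, contains no points. To handle this, I would first show that the weakly special subvarieties of the $V_a$ fall into finitely many definable families, whose combinatorial types are determined by $m$ and $n$. I would then count in the definable family of complements $Z_a\setminus j^{-1}(V_a^{\mathrm{sp}})$, checking that it remains a definable family.
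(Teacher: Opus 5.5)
Your argument is correct. It is essentially Pila's proof, which the paper only quotes, describing it as an instance of the Pila--Zannier strategy: block-counting of quadratic points uniformly in the definable family $\{Z_a\}$, the modular Ax--Lindemann theorem, and Siegel's ineffective class number bound for Galois orbits.

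The uniformity worry in your last paragraph is unfounded. Ax--Lindemann, together with the fact that a weakly special subvariety through a special point is special, is applied separately for each fixed $a$; this forces the special points of $V_a \setminus V_a^{\mathrm{sp}}$ onto zero-dimensional blocks. All uniformity in $a$ then comes from the block-counting theorem for the single family $\{Z_a\}$. So you should drop the detour through $Z_a \setminus j^{-1}(V_a^{\mathrm{sp}})$: proving that this is definable in $a$ would itself need a nontrivial uniform description of $V_a^{\mathrm{sp}}$.
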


The constant $c(m, n, d)$ in Theorem~\ref{thm:Pila} is not effectively computable.
Indeed, the ineffectivity of the Landau--Siegel lower bound \cite{Landau35, Siegel35} for the class number of an imaginary quadratic field poses (see e.g.~\cite[p.~55]{Pila22}) a serious obstacle to proving the Andr\'e--Oort conjecture effectively via the Pila--Zannier strategy, even in light of recent progress \cite{Binyamini19a, Binyamini24} towards effective versions of the Pila--Wilkie counting theorem \cite{PilaWilkie06}.

\subsection{Main results}

The first main result of this paper is an effective version of Theorem~\ref{thm:Pila} for a certain class of subvarieties.
Notably this class includes subvarieties of arbitrary dimension, degree, and degree of field of definition.

\begin{thm}\label{thm:main}
	Let $m, n, d \in \Z_{>0}$.
	Let $V \subset \AN$ be a hypersurface defined by an equation
	\[ a_1 x_1^m + \ldots + a_n x_n^m = b,\]
	where $a_1, \ldots, a_n, b \in \alg$ are such that
	\[ [\Q(a_1, \ldots, a_n, b) : \Q] \leq d.\]
	If $(x_1, \ldots, x_n) \in V \setminus V^\mathrm{sp}$ is a special point and $\Delta_i$ denotes the discriminant of $x_i$, then
	\[ \max_{1 \leq i \leq n} \lvert \Delta_i \rvert^{1/2} \leq \exp(10^{33} 10^{13n} (d!)^{12} m^4).\]
\end{thm}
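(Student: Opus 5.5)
The plan is to remove the coefficients by Galois conjugation, which is what makes the bound independent of the heights of $a_1, \ldots, a_n, b$. We then obtain a contradiction from the sizes of singular moduli, which are controlled through their $\tau$-representatives.

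\textbf{Eliminating the coefficients.} Let $K = \Q(a_1, \ldots, a_n, b)$ and let $L$ be the Galois closure of $K$ over $\Q$, so $[L:\Q] \le d!$. Write $x = (x_1, \ldots, x_n) \in V \setminus V^{\mathrm{sp}}$. For every $\sigma \in \Gal(\alg/L)$ we have $\sum_i a_i \sigma(x_i)^m = b$. Hence for any $\sigma_0, \ldots, \sigma_n \in \Gal(\alg/L)$ the vector $(a_1, \ldots, a_n, -b) \neq 0$ lies in the kernel of the $(n+1)\times(n+1)$ matrix with rows $(\sigma_k(x_1)^m, \ldots, \sigma_k(x_n)^m, 1)$. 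Its determinant $D$ therefore vanishes, and $D$ involves only the singular moduli. It then suffices to show that, once $\max_i \lvert\Delta_i\rvert$ exceeds the stated bound, some choice of $\sigma_k$ gives $D \neq 0$.

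\textbf{Preliminary reductions.} First I would reduce, by induction on $n$, to the case where all $a_i \neq 0$ and no proper subsum degenerates. If some sub-relation held identically on the orbit, it would either give a point in a hypersurface of the same shape in fewer variables, where induction applies, or force $x$ into $V^{\mathrm{sp}}$. The special subvarieties contained in such a $V$ have a transparent description: some coordinates are fixed singular moduli, and the rest are tied together by $x_i = x_j$ with $a_i = -a_j$ when $m$ is odd, and so on. Coordinates with $x_i = x_j$, or more generally the same discriminant and conjugate values, must be grouped together. I would also reorder so that $\lvert\Delta_1\rvert \ge \lvert\Delta_2\rvert \ge \cdots$.

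\textbf{Making $D$ nonzero.} Recall that a singular modulus of discriminant $\Delta$ with reduced representative $\tau = (-b' + \sqrt{\Delta})/2a'$ satisfies $\lvert j(\tau)\rvert = e^{\pi\lvert\Delta\rvert^{1/2}/a'} + O(1)$. The dominant value ($a'=1$) is thus exponentially larger than the values with $a' = 2, 3, \ldots$, and those with $a' \gg \lvert\Delta\rvert^{1/2}$ are bounded. I want to choose $\sigma_0, \ldots, \sigma_n$ so that the matrix is ``triangular in size'': $\sigma_k(x_k)$ is much larger than anything else in row $k$ and in the later columns. Then a single term of the Leibniz expansion of $D$ dominates all the others, and $D \neq 0$.

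Concretely, the conjugates of $x_i$ over $L$ run through a union of at least $h(\Delta_i)/d!$ classes, obtained as an orbit of an index-$\le d!$ subgroup of the class group. The key input would be a combinatorial lemma: among the reduced forms of discriminant $\Delta_i$, the forms with small leading coefficient ($a' \le C(d)$, e.g.\ powers of a split prime class) meet every coset of such a subgroup. This yields conjugates of $x_i$ of size at least $\exp(\pi\lvert\Delta_i\rvert^{1/2}/C(d))$. Meanwhile one uses explicit estimates to show that the remaining coordinates can be made comparatively small, or at least dominated.

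\textbf{Main obstacle.} The hardest step will be the simultaneous control of several coordinates whose ring class fields are related, for instance with equal or closely related discriminants. There the automorphisms do not act independently, and making $x_k$ dominant may also enlarge $x_l$. I would first handle this via the compositum of ring class fields and the explicit description of the Galois action on pairs of CM points, choosing the $\sigma_k$ one coordinate at a time. The resulting losses, a factor depending on $d!$ in the exponent at each of the $n$ inductive steps, combine with elementary bounds such as $\lvert\Delta\rvert^{1/2}$ versus $\log$-sizes and the exponent $m$ coming from $\sigma(x_i)^m$. Balancing these produces a bound of the shape $\exp(c^{n} (d!)^{O(1)} m^{O(1)})$ as stated.
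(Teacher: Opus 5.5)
\textbf{There is a genuine gap.} Your reformulation is correct as far as it goes. Since $(a_1,\ldots,a_n,-b)\neq 0$ annihilates every row $(\sigma(x_1)^m,\ldots,\sigma(x_n)^m,1)$ with $\sigma\in\Gal(\alg/L)$, finding $\sigma_0,\ldots,\sigma_n$ with $D\neq 0$ is equivalent to showing that $x_1^m,\ldots,x_n^m,1$ are linearly independent over $L$. But that is the theorem restated. The two steps meant to prove it are missing, and one of them is false as stated.

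\emph{The combinatorial lemma fails already for $d=2$.} Take primes $q_2\equiv 3 \bmod 4$ and $q_1\equiv 1 \bmod 8\prod_{3\le p\le C}p$, put $\Delta=-q_1q_2$, and let $L=\Q(\sqrt{q_1})$, which lies in the genus field. Then $\Gal(\alg/L)$ acts on the singular moduli of discriminant $\Delta$ through $C^2$ and complex conjugation, so its two orbits are exactly the two genera. A reduced form $(a',b',c')$ in the non-principal genus has $(a'/q_1)=-1$ or $q_1\mid a'$. Every prime $p\le C$ is a square modulo $q_1$, so $a'>C$. Hence every $L$-conjugate of such a singular modulus has absolute value at most $e^{\pi\lvert\Delta\rvert^{1/2}/C}+2079$, with $C$ arbitrary. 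In general, the least norm in a coset of a bounded-index subgroup of the class group is a least-prime-in-a-class problem, and it is not bounded in terms of $d$ alone.

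\emph{The dominance ordering is never constructed.} Even granting large conjugates, you do not produce an ordering in which one Leibniz term dominates. The diagonal entry of row $k$ must beat the other coordinates (possibly of larger discriminant), the column of $1$'s, and the products coming from below the diagonal.

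\emph{The main obstacle you postpone is the heart of the problem.} With equal fundamental discriminants, $K(x_l)$ can lie inside $K(x_k)$, so $\sigma(x_l)$ is determined by $\sigma(x_k)$. With different fundamental discriminants, making the two Galois actions independent needs Kühne's theorem on intersections of ring class fields. It also needs a lower bound for $\cl(\Delta)$ strong enough to beat the $2$-torsion of the class group and $[L:\Q]$. Such a bound is effectively available only through Tatuzawa, which leaves an exceptional $D_*$ to be treated separately. Nothing in your sketch supplies these inputs, and nothing in it actually yields the claimed shape of the bound.

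\emph{The missing idea is to bound the coefficients rather than eliminate them.} The paper first merges equal coordinates; the merged coefficients are nonzero because $x\notin V^{\mathrm{sp}}$. It absorbs $b$ by adjoining $1728$, and passes to a minimal $K$-linear dependence among the $x_i^m$. Cramer's rule and Hadamard's inequality (Proposition~\ref{prop:htcoeff}) then give coefficients of height at most $2(n-1)m\sum_i h(x_i)+O(n\log n)$. Your conjugate determinant does appear here, but it is used to bound heights, not to prove non-vanishing. A Bilu--K\"uhne-type bound that is linear in the coefficient height (Theorem~\ref{thm:nonunif}) then closes the loop. This works because $h(x)\le 9\sqrt2\,\lvert\Delta\rvert^{1/2}/\cl(\Delta)^{1/2}$, combined with Goldfeld--Gross--Zagier, makes $h(x)=o(\lvert\Delta\rvert^{1/2})$ effectively. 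Inverting that logarithmic class number bound is what produces the shape $\exp(\cdots m^4)$.
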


Significantly, the bound in Theorem~\ref{thm:main} is entirely independent of the height of the coefficients $a_1, \ldots, a_n, b$.
We will also prove similarly uniform explicit bounds on the positive-dimensional maximal special subvarieties of $V$, see Theorem~\ref{thm:positive}.

Our second main result generalises Theorem~\ref{thm:main} to linear dependencies among powers of singular moduli and roots of unity.
To state our result, we make the following definition, along the lines of \cite[Definition~7.6]{Pila14a}.

\begin{definition}\label{def:tuples}
	Let $m, d \in \Z_{> 0}$ and let $n, l \in \Z_{\geq  0}$.
	 An $(n+l)$-tuple $(x_1, \ldots, x_n, \zeta_1, \ldots, \zeta_l) \in \C^{n+l}$ is called a \textit{non-degenerate} $(m, d, n, l)$-\textit{tuple} if:
	 \begin{enumerate}
	 	\item $x_1, \ldots, x_n$ are singular moduli;
	 	\item $\zeta_1, \ldots, \zeta_l$ are roots of unity; and
	 	\item there exist $a_1, \ldots, a_n, b_1, \ldots, b_l, c \in \alg$ such that:
	 	\begin{enumerate}
	 		\item $a_1 x_1^m + \ldots + a_n x_n^m + b_1 \zeta_1 + \ldots + b_l \zeta_l + c = 0$;
	 		\item $ [\Q(a_1, \ldots, a_n, b_1, \ldots, b_l, c) : \Q] \leq d$;
	 		\item there does not exist a non-empty subset $I \subset \{1, \ldots, n\}$ such that $\# \{x_i : i \in I\} = 1$ and $\sum_{i \in I} a_i = 0$; and
	 		\item no non-empty proper sub-sum of $b_1 \zeta_1 + \ldots + b_l \zeta_l + c$ vanishes (but $c=0$ is allowed if $l = 0$).
	 	\end{enumerate}
	 \end{enumerate}  
\end{definition}

\begin{thm}\label{thm:tups}
	Let $m, d \in \Z_{> 0}$ and let $n, l \in \Z_{\geq  0}$.
	If $(x_1, \ldots, x_n, \zeta_1, \ldots, \zeta_l)$ is a non-degenerate $(m, d, n, l)$-tuple, $\Delta_i$ denotes the discriminant of the singular modulus $x_i$, and $N_j$ denotes the order of the root of unity $\zeta_j$, then
	\begin{align*}
		\max_{1 \leq i \leq n} \lvert \Delta_i \rvert^{1/2} &\leq \exp( (l+1)^8 \exp(10^{22} 10^{27n} (d!)^{13} m^4)),\\
		\max_{1 \leq j \leq l} N_j &\leq \exp((l+1)^8 \exp(10^{24} 10^{27n} (d!)^{13} m^4)).
	\end{align*}
\end{thm}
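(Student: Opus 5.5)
We will prove Theorem~\ref{thm:tups} by a Galois-theoretic reduction to a finite collection of "dominant term" inequalities, in the spirit of the effective proofs of André--Oort for specific curves. The argument will handle $n$ and $l$ together by strong induction on the number $n+l$ of terms.

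The first step is to fix a non-degenerate tuple and let $K = \Q(a_1, \ldots, a_n, b_1, \ldots, b_l, c)$, with $[K:\Q] \le d$. Let $L$ be the compositum of $K$ with the ring class fields of the $\Delta_i$ and the cyclotomic field $\Q(\zeta_N)$, where $N = \mathrm{lcm}(N_j)$. For $\sigma \in \Gal(L/K)$, the relation
\[ \sum_i a_i \sigma(x_i)^m + \sum_j b_j \sigma(\zeta_j) + c = 0 \]
persists. The Galois action permutes the singular moduli of discriminant $\Delta_i$ transitively, up to index at most $d$. So we may choose $\sigma$ such that $\sigma(x_{i_0})$ is the dominant singular modulus $j((\sqrt{\Delta_{i_0}})/2)$ or $j((1+\sqrt{\Delta_{i_0}})/2)$, where $i_0$ maximises $|\Delta_i|$. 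This singular modulus has absolute value about $e^{\pi|\Delta_{i_0}|^{1/2}}$.

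The key is to compare this with other conjugates. Consider $\sigma'$ which sends $x_{i_0}$ to a subdominant conjugate, whose size is at most $e^{\pi|\Delta|^{1/2}/2}$. Since at most about $d\cdot 2^{\omega}$ conjugates are dominant, we can choose $\sigma$ and $\sigma'$ agreeing on $K$ and differing only by where they send the large terms. Subtracting the two relations eliminates $b$-independence issues. The result is a relation among fewer terms, or one in which a single huge term $e^{m\pi|\Delta|^{1/2}}$ must be cancelled by others. In the latter case, the $x_i$ with $|\Delta_i|$ close to $|\Delta_{i_0}|$ must share a dominant term. Using the known bound that $j(\tau)=e^{-2\pi i\tau}+744+O(e^{-2\pi \operatorname{Im}\tau})$, equating leading terms forces equalities $x_i=x_{i_0}$ or proportional dominant terms. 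The non-degeneracy condition (c), that the coefficient sums over equal $x_i$ are non-zero, prevents total cancellation. We then iterate on the next-order terms, which have size at least $e^{-\pi|\Delta|^{1/2}}$ relative to the leading term, getting a chain of at most $n$ steps. Each step loses a factor that produces the $10^{13n}$-type towers.

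For the roots of unity, the plan is the standard one. Apply Galois conjugates from $\Gal(\Q(\zeta_N)/\Q(\zeta_{N/p}))$ for primes $p\,\|\,N$ with $p$ large. Because no proper sub-sum vanishes (condition (d)), averaging over the orbit forces $p$ to be bounded by $(l+1)d$, as in Mann-type theorems. We then need to combine this with the singular modulus part. First bound $N$ in terms of the $\Delta_i$ and $l$: the fields $K(\zeta_N)$ and the ring class fields are nearly disjoint away from primes dividing $\Delta_i$. Then feed this back into the dominance argument, which yields the double exponential with $(l+1)^8$.

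The main obstacle is making the cancellation step uniform in the heights of the $a_i$. The coefficients may be enormous and could in principle balance a large dominant term. To get uniformity, I would never compare a term to the coefficients directly. Instead I would compare two Galois-conjugate relations with the same $K$-embedding: because $\sigma|_K=\sigma'|_K$, the coefficients are identical in both relations. This yields a system of linear equations in the $a_i$ with a Vandermonde-like matrix of conjugate singular-modulus powers. Its determinant is nonzero unless a degeneracy occurs. Lower-bounding this determinant via separation of the dominant and subdominant terms is the step I expect to be hardest, and I would attempt it first with $n+1$ carefully chosen conjugates.
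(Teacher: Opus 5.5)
There is a genuine gap, and it is at the step you flag as hardest: nothing in your sketch makes the bound on $\lvert\Delta_i\rvert$ independent of the heights of $a_i, b_j, c$.

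Your dominant-term comparison is the Bilu--K\"uhne argument. On its own it only yields $\lvert\Delta_{i_0}\rvert^{1/2}\ll [K:\Q]\,h(a,b,c)+\cdots$, because a coefficient ratio of size $e^{m\pi\lvert\Delta\rvert^{1/2}/2}$ can balance the dominant term.

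Your remedy, lower-bounding a determinant of $K$-conjugates of the $x_i^m$, is no easier than the theorem. Non-vanishing of such a minor is equivalent to $K$-linear independence of the $x_i^m$, and you give no mechanism for a lower bound. Dominance alone does not supply one, for three reasons:
\begin{itemize}
\item If $K\cap\Q(x_i)\neq\Q$, the $K$-conjugates of $x_i$ form a proper subset of its $\Q$-conjugates, so you cannot make every $x_k$ dominant in some row.
\item Distinct conjugates can have equal absolute value (the forms $(a,\pm b,c)$).
\item For singular moduli with the same fundamental discriminant, the Galois action is correlated through nested ring class fields.
\end{itemize}
Getting enough freedom in the Galois action is exactly what needs the $\mathrm{lcm}$ argument and K\"uhne's intersection-of-class-fields theorem (Proposition~\ref{prop:step4}, which excludes the Tatuzawa-exceptional $D_*$). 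Even then, the output depends on the coefficient heights.

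The idea you are missing is to use Cramer's rule in the easy direction. By Proposition~\ref{prop:htcoeff}, a minimal dependence among the algebraic integers $x_i^m$ can be normalised so that its coefficients have height at most $2(n-1)\sum_i h(x_i^m)+(n-1)\log^+(n-1)$. Hadamard's inequality bounds the minors from above, and no lower bound is ever needed. Feed this into Theorem~\ref{thm:nonunif}, whose bound is linear in the coefficient height and uniform in $m$. Then use $h(x)\le 9\sqrt{2}\,\lvert\Delta\rvert^{1/2}/\cl(\Delta)^{1/2}$ (Proposition~\ref{prop:htsing}) and the Goldfeld--Gross--Zagier bound (Proposition~\ref{prop:class}). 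This gives $\max_i\lvert\Delta_i\rvert^{1/2}\le c_3+c_4\sum_i\lvert\Delta_i\rvert^{1/2}/\cl(\Delta_i)^{1/2}$, which closes effectively (Theorem~\ref{thm:mindep}). This, not an $n$-step chain, is where the $\exp(\cdots m^4)$ shape comes from.

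The roots-of-unity part is also not developed into a working argument. The paper bounds the $\Delta_i$ \emph{first}, by conjugating with $\sigma\in\Gal(\alg/L\cdot\Q^{\mathrm{ab}})$, which fixes all coefficients and all $\zeta_j$ at once. Propositions~\ref{prop:Qab} and \ref{prop:2part}, with Tatuzawa's bound, give $[L\Q^{\mathrm{ab}}(x_i):L\Q^{\mathrm{ab}}]>n$ for large $\lvert\Delta_i\rvert$ with $D_i\neq D_*$. Subtracting then leaves a relation among singular moduli alone. The $D_*$ terms are handled by Proposition~\ref{prop:equalfundunifMann}. Only then is Evertse's theorem (Proposition~\ref{prop:Evertse}) applied over the now bounded-degree field $L'$ to bound the $N_j$.

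Your reverse order (bound $N$ in terms of the $\Delta_i$, then feed back) does not obviously close. The field carrying the coefficients would then have degree growing with $\lvert\Delta_i\rvert$, while every available bound on $\lvert\Delta_i\rvert$ grows with that degree.
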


Pila \cite[Theorem~7.7]{Pila14a} gave an ineffective proof of the $m = 1$ case of Theorem~\ref{thm:tups}.
More generally, his ineffective proof \cite[Theorem~1.1]{Pila11} of Andr\'e--Oort for $\AN$ also covers subvarieties of $\AN \times \G^l$.
In this mixed setting, ``special points'' are points of $\AN \times \G^l$ whose $\AN$ coordinates are singular moduli and whose $\G^l$ coordinates are roots of unity.
Paulin \cite{Paulin15, Paulin16} proved an effective Andr\'e--Oort result for curves $V \subset \mathbb{A}^1_\C \times \G$; in particular, his result implies \cite[Remark, p.~162]{Paulin15} the $n = l = 1$ case of Theorem~\ref{thm:tups}.
 
When $n = 0$ and $d = 1$, Theorem~\ref{thm:tups} corresponds to a classical result of Mann \cite{Mann65} on roots of unity that are linearly dependent over $\Q$.
The full $n = 0$ case is due to Schinzel \cite{Schinzel88}, subsequently improved by Dvornicich and Zannier \cite{DvornicichZannier00};
see also results of Schlickewei \cite{Schlickewei96} and Evertse \cite{Evertse99}.
These results are instances of the Manin--Mumford conjecture for $\G^n$, proved originally by Laurent \cite{Laurent84}, with refined and uniform versions proved subsequently by many others, see e.g.~\cite{Martinez19}.

\subsection{Previous effective results on André--Oort}

To the author's knowledge, the only cases prior to Theorem~\ref{thm:main} where such a uniform version of Andr\'e--Oort for $\AN$ was known effectively were:
\begin{enumerate}
	\item when $V \subset \mathbb{A}^2_\C$ is a curve with Zariski-closure in $\mathbb{P}^1_\C \times \mathbb{P}^1_\C$ equal to $V \cup \{(\infty, \infty)\}$, by a result of K\"uhne \cite[Theorem~4]{Kuhne13};
	\item a few very specific classes of curves $V \subset \mathbb{A}^2_\C$ and surfaces $V \subset \mathbb{A}^3_\C$ defined over $\Q$, see \cite{AllombertBiluMadariaga15, BiluLucaMadariaga16, Riffaut19, LucaRiffaut19} and \cite{Fowler20, Fowler23, BiluGunTron26} respectively;
	\item when $V \subset \AN$ is a hypersurface defined by an equation of the form 
	\[\quad \quad \mbox{either } x_i P(x_1, \ldots, x_n) = 1 \mbox{ or } \Psi_N(x_i, x_k) P(x_1, \ldots, x_n) = 1,\] 
	 where $P$ is a polynomial with algebraic integer coefficients and $\Psi_N$ denotes the modular polynomial defined in \cite[\S6.1]{Zagier08}. 
	These results are due to Bilu, Habegger, and K\"uhne \cite[Corollary~1.2]{BiluHabeggerKuhne18} and Li \cite[Corollary~1.6]{Li21} respectively.
\end{enumerate}

Note that the hypersurfaces in (3) in fact contain no special subvarieties, because doing so would contradict the main result of \cite{BiluHabeggerKuhne18} (respectively, of \cite{Li21}) that singular moduli (respectively, modular polynomials $\Psi_N$ evaluated at singular moduli) are never units in the ring of algebraic integers.

Our Theorem~\ref{thm:main} therefore provides the first class of examples of subvarieties of $\AN$ of arbitrary dimension and with non-empty special locus for which a uniform form of Andr\'e--Oort is known effectively. 

All other currently known effective Andr\'e--Oort results for $\AN$ do not possess the same level of uniformity.
Such results include when $V \subset \AN$ is:
\begin{itemize}
	\item  a curve, proved independently by K\"uhne \cite{Kuhne12} and Bilu, Masser, and Zannier \cite{BiluMasserZannier13}, see also variant proofs by W\"ustholz \cite{Wustholz14} and, under some additional hypotheses, Papas \cite[Corollary~1.7]{Papas26};
	\item a linear subvariety, due to Bilu and K\"uhne \cite[Theorem~1.1]{BiluKuhne20};
	\item  a \textit{hereditarily degree non-degenerate hypersurface} (see \cite[Definition~2]{Binyamini19}), by a result of Binyamini \cite[Corollary~4]{Binyamini19}.
\end{itemize}
These results each give a bound on $\max_i \lvert \Delta_i \rvert$ which also depends on some measure of the height of the subvariety $V$.

Note that the set of hypersurfaces covered by Theorem~\ref{thm:main} strictly includes the set of hyperplanes of $\AN$ and is in turn strictly contained in the set of hereditarily degree non-degenerate hypersurfaces of $\AN$.
Theorem~\ref{thm:main} is thus both a uniform generalisation of \cite[Theorem~1.1]{BiluKuhne20} and a uniform version of a special case of  \cite[Corollary~4]{Binyamini19}.
(The distinction between hyperplanes and linear subvarieties of $\AN$ is not significant for applications to Andr\'e--Oort, see \cite[\S2.1]{BiluKuhne20}.)

The dependence of the constant $c(m ,n , d)$ in Theorem~\ref{thm:Pila} on each of the parameters $m, n, d$ is necessary.
If, however, one wants to bound only the number of special points in $V \setminus V^\mathrm{sp}$, rather than the associated discriminants, then the dependence on the degree of the field of definition can be eliminated.
It follows from a result of Scanlon \cite{Scanlon04} that, for $m, n \in \Z_{> 0}$, there exists an ineffective constant $c(m, n)$ with the property that the number of special points in $V \setminus V^\mathrm{sp}$ is at most $c(m, n)$ for $V \subset \AN$ a hypersurface of degree $m$.

\subsection{Structure of the paper}

First, we recall in Section~\ref{sec:prelim} some basic facts which will be useful throughout the paper.
Theorem~\ref{thm:main} is proved in Sections~\ref{sec:nonunif} and \ref{sec:unif}.
The proof of Theorem~\ref{thm:tups} occupies Sections~\ref{sec:tup1} and \ref{sec:tup2}.

	\section{Preliminaries}\label{sec:prelim}
	
	\subsection{Heights}

Let $h \colon \mathbb{A}_{\alg}^n \to \R_{\geq 0}$ denote the absolute logarithmic height,
which is defined as in \cite[\S1.5]{BombieriGubler06}.
For $x \in \alg$, let $\lceil x \rceil$ denote the ``house'' of $x$, i.e.~the maximal absolute value of a $\Q$-conjugate of $x$.

The following simple result bounds the height of the coefficients needed to witness a minimal linear dependence among algebraic integers $x_1, \ldots, x_n$.
Crucially for its application in the proofs of Theorems~\ref{thm:main} and \ref{thm:tups}, this bound is linear in the heights of the $x_i$.

\begin{prop}\label{prop:htcoeff}
	Let $n \in \Z_{>0}$.
	Let $x_1, \ldots, x_n$ be pairwise distinct algebraic integers.
	Let $K$ be a number field.
	Suppose that $x_1, \ldots, x_n$ are linearly dependent over $K$ and minimal for this property.
	Then there exist $a_1, \ldots, a_n \in K^\times$ such that $a_1 = 1$,
	\[  a_1 x_1 +  \ldots + a_n x_n = 0, \mbox{and}\]
	\[ \max_{i } h(a_i) \leq 2  (n-1) \sum_{i=1}^n h(x_i) + (n-1) \log^+ (n-1).\]
\end{prop}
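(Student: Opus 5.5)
Minimality means the $K$-linear relations among $x_1,\ldots,x_n$ form a one-dimensional $K$-space. Every coefficient in a nonzero relation is nonzero, since otherwise a proper subset would be dependent. So I normalise $a_1=1$, which determines the $a_i$ uniquely, and express them by Cramer's rule. The plan is to write the relation as the kernel of a concrete matrix with algebraic-integer entries and then estimate heights of the resulting minors.

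Let $L$ be a finite Galois extension of $\Q$ containing $K$ and all the $x_i$. Choose a basis $\omega_1,\ldots,\omega_D$ of $K$ over $\Q$ and the embeddings $\sigma_1,\ldots,\sigma_D\colon K\to\C$. This is awkward: the matrix would have to encode $K$-linearity, and would bring in the height of the $\omega_k$, which we cannot afford.

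Instead I work Galois-theoretically. Fix automorphisms $\tau_1,\ldots,\tau_r$ of $L$ fixing $K$, say all of $\Gal(L/K)$. The relation $\sum_i a_i x_i=0$ with $a_i\in K$ gives $\sum_i a_i\tau(x_i)=0$ for every $\tau\in\Gal(L/K)$. Let $M$ be the matrix with rows $(\tau(x_1),\ldots,\tau(x_n))$ for $\tau\in\Gal(L/K)$. Its kernel over $L$ consists of the $L$-relations that are stable under $\Gal(L/K)$-conjugation, and so, by Galois descent (Hilbert 90), it is spanned by $K$-relations. Hence $\ker M$ is one-dimensional and $M$ has rank $n-1$.

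Choose $n-1$ rows $\tau_1,\ldots,\tau_{n-1}$ that are linearly independent, and delete one column so that the resulting minor is nonzero. The kernel vector is then given up to scaling by the signed $(n-1)\times(n-1)$ minors $\Delta_1,\ldots,\Delta_n$. Since $a_1\neq 0$, we have $\Delta_1\neq 0$ and $a_i=\pm\Delta_i/\Delta_1$. Each $\Delta_i$ is an algebraic integer.

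To bound the heights, I use $h(\Delta_i/\Delta_1)\le h(\Delta_i)+h(\Delta_1)$. I then bound $h(\Delta_i)$ using the Archimedean places only, since $\Delta_i$ is integral. At each embedding $\sigma$ of $L$, expansion of the determinant gives
\[ \log|\sigma\Delta_i|\le \log((n-1)!)+\sum_{k=1}^{n-1}\log^+|\sigma\tau_k(x_{j_k})| \]
for the maximising permutation. This is at most $(n-1)\log(n-1)+\sum_{k}\sum_{j}\log^+|\sigma\tau_k x_j|$.

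Averaging over $\sigma$, the term for each $k$ runs over all embeddings of $L$ composed with $\tau_k$, which is again all embeddings. It therefore contributes exactly $\sum_j h(x_j)$. This yields
\[ h(\Delta_i)\le (n-1)\sum_j h(x_j)+(n-1)\log^+(n-1). \]

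Adding the bounds for $\Delta_i$ and $\Delta_1$ gives $2(n-1)\sum_j h(x_j)+2(n-1)\log^+(n-1)$. This has one factor of $(n-1)\log^+(n-1)$ too many. I can remove it with a sharper estimate for the quotient. The projective point $(\Delta_1:\cdots:\Delta_n)$ has height at most $\max_\sigma$-type bound $\frac{1}{[L:\Q]}\sum_\sigma\log\max_i|\sigma\Delta_i|$, since the denominators are integral. Then $h(a_i)\le h(1:a_i)\le h(\Delta_1:\cdots:\Delta_n)+h(\Delta_1)$, which again doubles the bound.

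A better route is $h(a_i)=h(a_i^{-1})$ together with the product formula. Writing $h(\Delta_i/\Delta_1)\le h(\Delta_i)+h(\Delta_1)$ but keeping the $\log(n-1)!$ term only once seems to need care. I expect this constant bookkeeping to be the main obstacle; the remaining steps, descent and Cramer's rule, are routine. If it cannot be done, I would use $\log((n-1)!)\le (n-1)\log^+(n-1)$ more cleverly, via Hadamard's inequality. That gives $\log|\sigma\Delta_i|\le \frac{n-1}{2}\log(n-1)+\sum_k\log^+\max_j|\sigma\tau_k x_j|$, whose constant $\frac{n-1}{2}\log(n-1)$ is half as large. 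Summed over $\Delta_i$ and $\Delta_1$, the constant becomes $(n-1)\log^+(n-1)$, which fits the stated bound exactly.
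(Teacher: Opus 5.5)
Your proposal is correct and, once you settle on Hadamard's inequality, it is essentially the paper's proof. Both use Cramer's rule on an $(n-1)\times(n-1)$ matrix of $K$-conjugates of the $x_j$, the archimedean formula for the height of the algebraic-integer minors, and $h(\Delta_i/\Delta_1)\le h(\Delta_i)+h(\Delta_1)$, with each minor contributing $\frac{n-1}{2}\log^+(n-1)+(n-1)\sum_j h(x_j)$. The only differences are that you justify the non-singular conjugate matrix by Galois descent, which the paper simply asserts from the $K$-linear independence of $x_2,\ldots,x_n$, and that your Leibniz-expansion and projective-height detours can be dropped.
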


\begin{proof}
	If $n = 1$, then $x_1 = 0$, and hence setting $a_1 = 1$ suffices.
	So assume $n \geq 2$. 
	Since $x_1, \ldots, x_n$ are minimally linearly dependent over $K$, there exist unique elements $b_2, \ldots, b_n \in K^\times$ such that $ x_1 = b_2 x_2 + \ldots + b_n x_n$.
	Since $x_2, \ldots, x_n$ are linearly independent over $K$, there exist $\sigma_1, \ldots, \sigma_{n-1} : K(x_1, \ldots, x_n) \hookrightarrow \C$ embeddings over $K$ such that the matrix
	\[ M_1 = \begin{pmatrix}
		\sigma_{1}(x_2) & \ldots & \sigma_1(x_n)\\
		\vdots & & \vdots\\
		\sigma_{n-1}(x_2) & \ldots & \sigma_{n-1}(x_n)
	\end{pmatrix}
	\]
	is non-singular.
	Observe that
	\[ M_1 (b_2, \ldots, b_n)^t = (\sigma_1(x_1), \ldots, \sigma_{n-1}(x_1))^t.\]
	
	For $i \in \{1, \ldots, n\}$, let $v_i = (\sigma_1(x_i), \ldots, \sigma_{n-1}(x_i))^t$.
	So we may write $M_1 = (v_2, \ldots, v_n)$.
	For $i \in \{2, \ldots, n\}$, let $M_i$ denote the matrix obtained from $M_1$ by replacing the column $v_i$ with $v_1$.
	Cramer's rule then gives that, for every $i \in \{2, \ldots, n\}$,
	\[ b_{i} = \frac{\det M_{i}}{\det M_1}.\]
	Note that $\det M_{i}$ is an algebraic integer for every $i \in \{1, \ldots, n\}$, because $x_1, \ldots, x_n$ are algebraic integers.
	Hence, for every $i \in \{1, \ldots, n\}$,
	\[ h(\det M_i) = \frac{1}{[L : \Q]} \sum_{\tau \colon L \hookrightarrow \C} \log^+ \lvert \tau(\det M_i) \rvert,\]
	where $L$ is a normal number field containing $x_1, \ldots, x_n$.
	
	For every $i \in \{1, \ldots, n\}$ and $\tau \colon L \hookrightarrow \C$, Hadamard's inequality gives that
	\begin{align*}
		\log^+ \lvert \tau(\det M_i) \rvert &= \log^+ \lvert \det (\tau(v_j) : j \in \{1, \ldots, n\} \setminus \{i\}) \rvert\\
		&\leq \log^+ \left( \prod_{\substack{j=1\\ j \neq i}}^{n} \left( \sum_{k=1}^{n-1} \lvert \tau(\sigma_k(x_j)) \rvert^2 \right)^{1/2} \right)\\
		&\leq \frac{n-1}{2} \log^+(n-1) + \sum_{\substack{j=1\\ j \neq i}}^{n} \max_{1 \leq k \leq n-1} \log^+ \lvert  \tau(\sigma_k(x_j)) \rvert.
	\end{align*}
	Therefore, for every $i \in \{2, \ldots, n\}$,
	\begin{align*}
		h(b_i) &\leq h(\det M_i) + h(\det M_1)\\
&\leq (n-1) \log^+(n-1) + \frac{2}{[L : \Q]} \sum_{\tau \colon L \hookrightarrow \C} \sum_{j=1}^{n} \max_{1 \leq k \leq n-1} \log^+ \lvert  \tau(\sigma_k(x_j)) \rvert\\
&\leq (n-1) \log^+(n-1) + \frac{2}{[L : \Q]} \sum_{j=1}^{n}  \sum_{\tau \colon L \hookrightarrow \C}  \sum_{k=1}^{n-1} \log^+ \lvert  \tau(\sigma_k(x_j)) \rvert\\	
&\leq (n-1) \log^+(n-1) + 2(n-1) 	\sum_{j=1}^{n} h(x_j).
	\end{align*}
Now set $a_1 = 1$ and $a_i = -b_i$ for $i \in \{2, \ldots, n\}$.
\end{proof}

	\subsection{Singular moduli}\label{subsec:singmod}
	
	\subsubsection{Basic properties}
	
	A \textit{singular modulus} $x$ is the $j$-invariant of an elliptic curve with complex multiplication.
	The \textit{discriminant} $\Delta$ of $x$ is defined to be the discriminant of the endomorphism order of a corresponding elliptic curve.
	In particular, $\Delta$ is a negative integer and $\Delta \equiv 0, 1 \bmod 4$.
	The \textit{fundamental discriminant} $D$ of $x$ is defined to be the discriminant of the imaginary quadratic field $\Q(\sqrt{\Delta})$.
	One has that $\Delta = f^2 D$ for some $f \in \Z_{>0}$.

Singular moduli are algebraic integers.
The singular moduli of a given discriminant form a complete set of conjugates over $\Q$.
If $x$ is a singular modulus of discriminant $\Delta$, then $\Q(\sqrt{\Delta}, x) / \Q$ is a Galois extension and
\[ [\Q(x) : \Q] = \cl(\Delta),\]
where $\cl(\Delta)$ denotes the class number of the discriminant $\Delta$.
See, for example, \cite[Lemma~9.3 \& Proposition~13.2]{Cox22}.

\subsubsection{Class number bounds}\label{subsub:class}

Famous results of Goldfeld \cite{Goldfeld76} and Gross--Zagier \cite{GrossZagier86} give an effective lower bound for $\cl(\Delta)$ in terms of $\lvert \Delta \rvert$.

\begin{prop}\label{prop:class}
	Let $\Delta < 0$ be such that $\Delta \equiv 0, 1 \bmod 4$. Then
	\[ \cl(\Delta) \geq \frac{1}{1000} (\log \lvert \Delta \rvert)^{1/2}.\]
\end{prop}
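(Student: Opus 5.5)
The plan is to derive the bound from an explicit form of the Goldfeld--Gross--Zagier theorem combined with genus theory, treating fundamental discriminants first and then general $\Delta = f^2 D$.

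\textbf{Fundamental discriminants.} For a fundamental discriminant $D<0$, I will use Oesterl\'e's explicit version of Goldfeld--Gross--Zagier:
\[ \cl(D) \geq \frac{1}{7000}\,\log\lvert D\rvert \prod_{\substack{p \mid D\\ p<\lvert D\rvert}}\Bigl(1-\frac{\lfloor 2\sqrt p\rfloor}{p+1}\Bigr). \]
Every factor in the product is positive; for example $p=2$ gives $1/3$ and $p=3$ gives $1/4$. Put $k=\omega(D)$, the number of distinct prime factors of $D$. The product is smallest when these primes are the first $k$ primes. Each factor is then at least an absolute constant, so the product is $\geq c_0^{k}$ for an explicit $c_0\in(0,1)$. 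Genus theory separately gives
\[ \cl(D)\geq 2^{k-1}. \]
Hence
\[ \cl(D)\geq \max\bigl(2^{k-1},\ \tfrac{1}{7000}c_0^{k}\log\lvert D\rvert\bigr). \]
Balancing the two terms in $k$ gives a bound of the form $c_1(\log\lvert D\rvert)^{\alpha}$ with $\alpha=\log 2/(\log 2-\log c_0)$. For large $k$ the genus bound dominates and I expect it to beat $(\log\lvert D\rvert)^{1/2}$ comfortably. For small $k$ the product is bounded below by the explicit finite product $\prod_{p\le p_k}(\cdots)$, which I will compute. A small-$\lvert D\rvert$ range will be covered by the trivial bound $\cl\ge1$: this suffices whenever $(\log\lvert D\rvert)^{1/2}\le 1000$, i.e.\ $\lvert D\rvert\le e^{10^6}$. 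This leaves lots of room for the constant.

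\textbf{General discriminants.} For $\Delta=f^2D$, I will use the class number formula for orders:
\[ \cl(\Delta)=\frac{\cl(D)\, f}{[\mathcal O_D^\times:\mathcal O^\times]}\prod_{p\mid f}\Bigl(1-\Bigl(\frac{D}{p}\Bigr)\frac1p\Bigr). \]
The unit index is at most $3$, and each Euler factor is at least $1-1/p$. This gives
\[ \cl(\Delta)\ge \cl(D)\,\varphi(f)/3. \]
Since $\log\lvert\Delta\rvert=2\log f+\log\lvert D\rvert$, I split into two cases. If $\log f\le\log\lvert D\rvert$, then $\log\lvert\Delta\rvert\le 3\log\lvert D\rvert$, and the fundamental bound transfers at the cost of a factor $\sqrt3$. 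If $\log f>\log\lvert D\rvert$, I use the elementary bound $\varphi(f)\ge c\sqrt f$ (in fact $\varphi(f)\ge\sqrt{f/2}$). Then
\[ \cl(\Delta)\gg \sqrt f\geq \lvert\Delta\rvert^{1/6}, \]
which far exceeds $\tfrac{1}{1000}(\log\lvert\Delta\rvert)^{1/2}$.

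\textbf{Main obstacle.} The delicate point is making the fundamental case fully explicit with exponent at least $1/2$. I need a clean lower bound for Oesterl\'e's product in terms of $k$ whose balancing against $2^{k-1}$ yields $\alpha\ge 1/2$. If the crude constant bound $c_0^k$ is too weak, I will instead bound the product by $\exp\bigl(-2\sum_{p\le p_k}p^{-1/2}-O(1)\bigr)$. Combined with $p_k\ll k\log k$, this decays subexponentially in $k$, so the genus bound takes over well before the balance point. The remaining work is careful bookkeeping of the constants so that $1/1000$ suffices throughout.
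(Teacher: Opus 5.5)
Your architecture matches the paper's. The paper imports the fundamental bound $\cl(D)^2\ge\frac{1}{42000}\log\lvert D\rvert$ from \cite[Proposition~2.6]{Fowler26} as a black box. It then uses the same order formula, with unit index at most $3$ and $\varphi(f)\ge\sqrt{f/2}$. It combines the pieces multiplicatively via $f/2\ge 0.4(1+2\log f)$ and $\log\lvert D\rvert\ge 1$, rather than by your case split on $\log f$ versus $\log\lvert D\rvert$. Because you rebuild the fundamental case yourself, two points need repair.

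\textbf{The fundamental case.} The uniform-$c_0$ version cannot work. The factor at $p=3$ is $1/4$, so $c_0\le 1/4$ and your balancing gives only $\alpha=1/3$. Also, the factors are not monotone in $p$ ($p=3$ gives less than $p=2$, and $p=31$ less than $p=29$), so ``first $k$ primes'' should read ``the $k$ smallest factors''.

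Your subexponential route works but is unnecessary. For every prime $p\ge 11$ one has $\lfloor 2\sqrt p\rfloor\le (p+1)/2$, so those factors are all $\ge 1/2$. The primes $p=2,3,5,7$ contribute $1/3,1/4,1/3,3/8$, whose product is $\frac16\cdot 2^{-4}$. Hence the whole product is at least $\frac{1}{12}2^{-(k-1)}$. Multiplying Oesterl\'e's inequality by the genus bound $\cl(D)\ge 2^{k-1}$ then gives $\cl(D)^2\ge \log\lvert D\rvert/84000$ directly, with no balancing and no small-$k$ analysis.

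\textbf{The transfer to $\Delta=f^2D$.} The constant $1/\sqrt{84000}\approx 1/290$ leaves little slack. With your displayed factor $1/3$ and the $\sqrt3$ from $\log\lvert\Delta\rvert\le 3\log\lvert D\rvert$, you lose $3\sqrt3$. That leaves you near $(\log\lvert\Delta\rvert)^{1/2}/1506$, short of $1/1000$; your remark about losing only $\sqrt3$ is inconsistent with the bound you wrote. Even the paper's imported constant $1/\sqrt{42000}$ would give only about $1/1065$ along this route.

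The fix is to use that the unit index is $1$ unless $D\in\{-3,-4\}$. In your first case those two discriminants force $f\le 4$ and $\lvert\Delta\rvert\le 64$, which the trivial bound $\cl\ge 1$ covers. For all other $D$ you get roughly $(\log\lvert\Delta\rvert)^{1/2}/502$.

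Your second case ($f>\lvert D\rvert$, so $\cl(\Delta)\ge\sqrt{f/2}/3$ with $f>\lvert\Delta\rvert^{1/3}$) is fine as written.
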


\begin{proof}
	Let $\Delta = f^2 D$, where $D$ is the associated fundamental discriminant.
	In particular, $\lvert D \rvert\geq 3$.
	By~e.g.~\cite[Proposition~2.6]{Fowler26},
	\[ \cl(D)^2 \geq \frac{1}{42000} \log \lvert D \rvert.\]
	The formula for $\cl(\Delta)$ in \cite[Theorem~7.24]{Cox22} implies that
	\[ \cl(\Delta)^2 \geq \frac{1}{9} \varphi(f)^2 \cl(D)^2.\]
	And so the elementary bound $\varphi(f) \geq \sqrt{f/2}$ gives that
	\[ \cl(\Delta)^2 \geq \frac{1}{378000} \frac{f}{2} \log \lvert D \rvert.\]
	Since $f/2 \geq 0.4(1+2 \log f)$ for every $f \geq 1$ and also $\log \lvert D \rvert \geq 1$,
	\[\cl(\Delta)^2 \geq \frac{0.4}{378000} (1 + 2 \log f) \log \lvert D \rvert \geq \frac{1}{10^6} (\log \lvert D \rvert + 2 \log f). \qedhere\]
\end{proof}

A theorem of Tatuzawa \cite{Tatuzawa51} implies (see \cite[\S2.6]{BiluKuhne20}) that
\begin{align}\label{eq:Tat}
	\cl(D) > (6.7 \times 10^{-3}) \lvert D \rvert^{5/12}
\end{align}
for every fundamental discriminant $D$, apart from at most one exception.
Denote by $D_*$ the unique fundamental discriminant for which the inequality \eqref{eq:Tat} is false, if such a fundamental discriminant exists.
If no such fundamental discriminant exists, then we will subsequently adopt the convention that the condition $D \neq D_*$ holds for every fundamental discriminant $D$.

A similar inequality to \eqref{eq:Tat} holds for non-fundamental discriminants, thanks to the class number formula in \cite[Theorem~7.24]{Cox22}.
In particular, if $\Delta < 0$ is such that $\Delta \equiv 0, 1 \bmod 4$ and the fundamental discriminant $D$ associated to $\Delta$ satisfies $D \neq D_*$, then, by~\cite[(17)]{BiluKuhne20},
\begin{align}\label{eq:Tatnonfund}
	\cl(\Delta) > (7.4 \times 10^{-4}) \lvert \Delta \rvert^{5/12}.
\end{align}

We will also use the following upper bound for the class number.

\begin{prop}[{\cite[Proposition~2.8]{Fowler26}}]\label{prop:classup}
	Let $\Delta < 0$ be such that $\Delta \equiv 0, 1 \bmod 4$. Then $\cl(\Delta) \leq  \lvert \Delta \rvert^{2/3}$.
	\end{prop}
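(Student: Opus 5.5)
The statement to prove is $\cl(\Delta) \leq \lvert \Delta \rvert^{2/3}$. I would start from the classical count of reduced binary quadratic forms. By \cite[Theorem~2.8]{Cox22}, $\cl(\Delta)$ equals the number of primitive positive definite forms $aX^2 + bXY + cY^2$ of discriminant $b^2 - 4ac = \Delta$ that are reduced, i.e.\ $\lvert b \rvert \leq a \leq c$, with $b \geq 0$ whenever $\lvert b \rvert = a$ or $a = c$. So it suffices to bound the number of reduced triples $(a, b, c)$ by $\lvert \Delta \rvert^{2/3}$; primitivity can simply be dropped, since this only enlarges the set being counted.

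For a reduced form we have $\lvert \Delta \rvert = 4ac - b^2 \geq 4a^2 - a^2 = 3a^2$, hence $1 \leq a \leq \sqrt{\lvert \Delta \rvert/3}$. A triple is determined by the pair $(a, b)$, because $c = (b^2 - \Delta)/(4a)$. Moreover $b \equiv \Delta \bmod 2$ and $\lvert b \rvert \leq a$.

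The plan is to combine two counts. The crude count allows at most $a+1$ values of $b$ for each $a$, giving roughly $\lvert \Delta\rvert/6$ forms, which is too weak. The refinement is that $b$ must satisfy $b^2 \equiv \Delta \bmod 4a$, and $c \geq a$ forces $b^2 \leq 4a^2 - \lvert \Delta\rvert/1$ only trivially, so the real saving must come from the congruence. The number of residues $b \bmod 2a$ with $b^2 \equiv \Delta \bmod 4a$ is at most $2^{\omega(4a)+1}$, times a bounded factor at the prime $2$. Since $\lvert b\rvert \leq a$, $b$ ranges over at most two full periods modulo $2a$. This gives
\[ \cl(\Delta) \ll \sum_{a \leq \sqrt{\lvert \Delta \rvert/3}} 2^{\omega(a)} \ll \sqrt{\lvert\Delta\rvert}\,\log \lvert \Delta \rvert, \]
which is at most $\lvert\Delta\rvert^{2/3}$ once $\lvert \Delta \rvert$ is large.

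To make this fully explicit I would avoid the average of $2^{\omega}$. Instead I would use the elementary divisor bound $2^{\omega(a)} \leq \tau(a) \leq C_\epsilon a^{\epsilon}$ with $\epsilon = 1/3$, for which $\tau(a) \leq \sqrt{3}\, a^{1/3}$ holds for all $a$. A careful handling of the power of $2$ dividing $4a$ is also needed. Summing over $a \leq \sqrt{\lvert\Delta\rvert/3}$ then gives a bound of the form $C\, \lvert \Delta \rvert^{1/2 + 1/6} = C \lvert\Delta\rvert^{2/3}$.

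The main obstacle is the constant $C$, which must be at most $1$. The explicit constant needs care, and I expect to have to split into cases:
\begin{itemize}
\item for small $\lvert \Delta\rvert$, say below a few thousand, verify the inequality directly or via the crude bound $\cl(\Delta) \leq 2\sqrt{\lvert\Delta\rvert/3}\cdot(\text{something small})$;
\item for larger $\lvert\Delta\rvert$, use the $\sqrt{\lvert\Delta\rvert}\log\lvert\Delta\rvert$ estimate with explicit constants.
\end{itemize}
An alternative that avoids the congruence counting is the class number formula $\cl(D) = \frac{w\sqrt{\lvert D\rvert}}{2\pi} L(1,\chi_D)$ combined with $L(1,\chi_D) \leq \log\lvert D\rvert + 2$ and \cite[Theorem~7.24]{Cox22} for $\Delta = f^2D$. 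That route reduces to bounding $f\prod_{p\mid f}(1+1/p)\cdot\sqrt{\lvert D\rvert}(\log\lvert D\rvert+2)/\pi$ by $(f^2\lvert D\rvert)^{2/3}$, which again holds up to finitely many small cases to be checked.
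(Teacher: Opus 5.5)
\textbf{The statement is not yet proved by your proposal.} The paper does not prove it either; it imports it from \cite[Proposition~2.8]{Fowler26}. The whole content of the statement is the explicit constant $1$. Your primary route cannot deliver that constant as set up, because two of its steps are false as written.

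First, the number of $b \bmod 2a$ with $b^2\equiv\Delta \bmod 4a$ is not $O(2^{\omega(a)})$ once $\gcd(a,\Delta)$ contains squares of primes. If $3^7\mid\Delta$ and $a=3^6$, then every $b\equiv 0\bmod 27$ satisfies the congruence modulo $3^6$, which gives $27$ residues. Taking $b=27u$ with $3\nmid u$ even gives primitive forms. In general there is an extra factor of size roughly $\gcd(a,\Delta)^{1/2}$.

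Second, $\tau(a)\le\sqrt3\,a^{1/3}$ is false: $\tau(12)=6>\sqrt3\cdot 12^{1/3}\approx 3.96$. The sharp constant is $\tau(2520)/2520^{1/3}\approx 3.53$, and even for $2^{\omega(a)}$ it is $16/210^{1/3}\approx 2.69$. With $A=(\lvert\Delta\rvert/3)^{1/2}$ one gets $\sum_{a\le A}Ca^{1/3}\approx \frac{3C}{4\cdot 3^{2/3}}\lvert\Delta\rvert^{2/3}\approx 0.36\,C\lvert\Delta\rvert^{2/3}$. That is already about $0.97\lvert\Delta\rvert^{2/3}$ for the best possible $C$, before any of the extra factors in your own count:
\begin{itemize}
\item $2^{\omega(4a)+1}$ residues rather than $2^{\omega(a)}$;
\item the unspecified factor at $2$;
\item two periods of $b$ instead of the single period $-a<b\le a$;
\item the $\gcd(a,\Delta)$ correction.
\end{itemize}
So this count cannot reach the constant $1$. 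You would need the logarithmic average of $2^{\omega(a)}$, the gcd correction and fully explicit error terms, and none of these is supplied.

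\textbf{The alternative route is the right idea, but it hides a large unperformed check.} The phrase ``holds up to finitely many small cases'' conceals a substantial verification, because bounding the Euler factor separately by $f\prod_{p\mid f}(1+1/p)$ is too lossy. For $f=2$ your reduced inequality reads $3\pi^{-1}(\log\lvert D\rvert+2)\le 4^{2/3}\lvert D\rvert^{1/6}$. This fails for every $\lvert D\rvert$ from $4$ up to about $2900$, and similarly on a range for $f=6$.

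The missing step is to apply the class number formula and the $L$-value bound to $\Delta$ itself rather than to $D$:
\begin{itemize}
\item With $\chi_\Delta=(\Delta/\cdot)$ one has $f\prod_{p\mid f}(1-\chi_D(p)/p)\,L(1,\chi_D)=f\,L(1,\chi_\Delta)$, hence $\cl(\Delta)=\frac{w_\Delta}{2\pi}\lvert\Delta\rvert^{1/2}L(1,\chi_\Delta)$.
\item $\chi_\Delta$ is a non-principal character modulo $\lvert\Delta\rvert$, so partial summation gives $L(1,\chi_\Delta)\le\log\lvert\Delta\rvert+2$.
\item Hence $\cl(\Delta)\le\pi^{-1}\lvert\Delta\rvert^{1/2}(\log\lvert\Delta\rvert+2)$ for $\Delta<-4$, where $w_\Delta=2$.
\item Since $\max_{x>0}(\log x+2)x^{-1/6}=6e^{-2/3}\approx 3.08<\pi$, this is at most $\lvert\Delta\rvert^{2/3}$.
\item For $\Delta=-3,-4$ one has $\cl(\Delta)=1$.
\end{itemize}
The extra $2\log f$ inside $\log\lvert\Delta\rvert$ is what pays for the Euler factor, and no case-checking remains.
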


\subsubsection{Archimedean and height bounds}

	Let $j \colon \h \to \C$ denote the modular $j$-function.
The $j$-function restricts to a bijection $\mathfrak{F} \to \C$, where $\mathfrak{F}$ denotes the standard fundamental domain for the action of $\SL$ on $\h$.	
The map
\begin{align}\label{eq:jbi}
	(a, b, c) \mapsto j\left(\frac{-b + i \lvert \Delta \rvert^{1/2}}{2a}\right)
\end{align}
is therefore a bijection between the set
\begin{align*}
	T_\Delta = \{&(a, b, c) \in \Z^3 : \Delta = b^2 - 4 ac, \gcd(a, b, c) = 1,\\
	&\mbox{ and either } -a < b \leq a < c \mbox{ or } 0 \leq b \leq a = c\}
\end{align*}
and the set of singular moduli of discriminant $\Delta$.
We have the following Archimedean estimate for singular moduli.

\begin{prop}[{\cite[Lemma~1]{BiluMasserZannier13}}]\label{prop:jbd}
	Let $x$ be a singular modulus of discriminant $\Delta$ whose preimage under the map \eqref{eq:jbi} is $(a, b, c) \in T_\Delta$.
	Then
	\[ \left\lvert \left\lvert x \right\rvert - e^{ \pi \lvert \Delta \rvert^{1/2} / a} \right\rvert \leq 2079.\]
\end{prop}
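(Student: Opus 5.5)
The plan is to work with the $q$-expansion of $j$ on the fundamental domain, without invoking any earlier proposition. Set $\tau = \frac{-b + i \lvert \Delta \rvert^{1/2}}{2a}$, which lies in the closure of $\mathfrak{F}$ because $(a,b,c) \in T_\Delta$. Put $q = e^{2\pi i \tau}$, so that $\lvert q \rvert = e^{-\pi \lvert \Delta \rvert^{1/2}/a}$. Since $\operatorname{Im} \tau \geq \sqrt{3}/2$ on $\mathfrak{F}$, we have $\lvert q \rvert \leq e^{-\pi\sqrt{3}} \approx 0.00433$. We then write
\[ j(\tau) = q^{-1} + 744 + \sum_{k \geq 1} c_k q^k, \]
where the coefficients $c_k$ are positive integers, with $c_1 = 196884$, $c_2 = 21493760$, and so on.

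The key step is the triangle inequality. It gives
\[ \bigl\lvert \lvert x \rvert - \lvert q \rvert^{-1} \bigr\rvert \leq \lvert x - q^{-1} \rvert \leq 744 + \sum_{k\geq1} c_k \lvert q \rvert^k \leq 744 + \sum_{k \geq 1} c_k e^{-\pi\sqrt{3}k}, \]
and the last bound holds because all $c_k$ are nonnegative, so the tail is monotone in $\lvert q \rvert$. Since $\lvert q \rvert^{-1} = e^{\pi \lvert \Delta \rvert^{1/2}/a}$, it then remains only to check that this numerical constant is at most $2079$.

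The main obstacle is this numerical evaluation, which requires controlling the infinite tail. I would proceed as follows.

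First, I would use the identity $\sum_{k\geq1} c_k r^k = j(\tau_0) - 744 - r^{-1}$ at the extremal point $\tau_0 = i\sqrt{3}/2$, where $r = e^{-\pi\sqrt 3}$. This point is real on the imaginary axis, so every term is positive. The point $\tau_0$ is not a CM point of special value, however, so instead of evaluating $j(\tau_0)$ directly I would bound the tail. The first few terms give $196884 r \approx 852.6$, $c_2 r^2 \approx 403$, $c_3 r^3 \approx 864299970 \cdot 8.1\times10^{-8} \approx 70$, and subsequent terms decrease.

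Second, to bound the remaining terms rigorously, I would use the crude estimate $c_k \leq e^{4\pi\sqrt{k}}$. This follows from the asymptotic $c_k \sim \frac{e^{4\pi\sqrt k}}{\sqrt2\, k^{3/4}}$, and an explicit version of it is known, for example $c_k \le e^{4\pi\sqrt k}/(\sqrt2 k^{3/4})$. With this, the terms $c_k r^k$ for $k \geq 4$ sum to something small.

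Adding $744$ then gives a total of roughly $744 + 853 + 403 + 70 + \text{tail} \approx 2079$. This matches the constant in Proposition~\ref{prop:jbd}, which is the bound found in \cite{BiluMasserZannier13}, so the final step is simply to confirm the arithmetic against their computation.
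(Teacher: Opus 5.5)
Your reduction is the standard argument behind \cite[Lemma~1]{BiluMasserZannier13}, which the paper cites without proof, and it is fine: $\tau$ lies in the closure of $\mathfrak{F}$, the $q$-expansion has non-negative coefficients, and the triangle inequality together with monotonicity in $\lvert q\rvert$ reduce everything to one numerical constant. The gap is in that numerical step, which is not a formality.

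With $r = e^{-\pi\sqrt{3}} \approx 0.0043334$, the quantity to bound is $744 + \sum_{k \geq 1} c_k r^k \approx 2078.81$. So the slack in $2079$ is below $0.2$. Your first term is already off by more than that: $c_1 r \approx 853.18$, not $852.6$.

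After $744 + c_1 r + c_2 r^2 + c_3 r^3 \approx 2071.13$, only about $7.87$ remains for $k \geq 4$. The true contribution there is about $7.68$, mainly $c_4 r^4 \approx 7.14$ and $c_5 r^5 \approx 0.51$.

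The crude estimate $c_k \leq e^{4\pi\sqrt{k}}$ gives $e^{4\pi(2-\sqrt{3})} \approx 29$ for $k=4$ alone, and a total of about $2103$. So the tail is not ``something small'', and your plan as stated does not reach $2079$. The sharper $c_k \leq e^{4\pi\sqrt{k}}/(\sqrt{2}k^{3/4})$ would just suffice, giving a total of about $2078.93$. But an asymptotic equivalence does not give an inequality for every $k$, so you would need to cite an explicit theorem. Also, ``confirm the arithmetic against their computation'' defers the proof rather than supplying it.

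The easiest fix is the route you dismissed. The point $\tau_0 = i\sqrt{3}/2$ is a CM point: it is a root of $4\tau^2+3=0$, of discriminant $-48$. The map $\tau \mapsto -1/\tau$ sends it to $2i/\sqrt{3}$, the point attached to the reduced form $(3,0,4)$. So $j(\tau_0)$ is the non-dominant singular modulus of discriminant $-48$, namely $1417905000 - 818626500\sqrt{3} \approx 2309.578$, the smaller root of $X^2 - 2835810000X + 6549518250000$. Hence
\[ 744 + \sum_{k \geq 1} c_k r^k = j(\tau_0) - e^{\pi\sqrt{3}} \approx 2309.578 - 230.765 = 2078.81 < 2079, \]
with no tail estimate needed.

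If you prefer to keep your own route, use the exact values of $c_4, c_5, c_6$ and bound only $k \geq 7$ crudely. There, $c_k \leq e^{4\pi\sqrt{k}} + 1193\, e^{2\pi\sqrt{k}}$ follows from positivity alone, by comparing $c_k e^{-2\pi\sqrt{k}}$ with $j(i/\sqrt{k}) = j(i\sqrt{k}) \leq e^{2\pi\sqrt{k}} + j(i) - e^{2\pi}$. These terms then contribute less than $0.01$ in total.
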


Among the singular moduli of a given discriminant $\Delta$, there is a unique largest (in absolute value) singular modulus, which is called the \textit{dominant} singular modulus of discriminant $\Delta$.
Under the bijection \eqref{eq:jbi}, the dominant singular modulus of discriminant $\Delta$ is the image of the unique triple $(a, b, c) \in T_\Delta$ with $a = 1$.

\begin{prop}[{\cite[Lemmas~5.1 \& 5.3]{BiluLucaMasser17}}]\label{lem:dombig}
	Let $\Delta < 0$ be such that $\Delta \equiv 0, 1 \bmod 4$.
	Let $x$ be the dominant singular modulus of discriminant $\Delta$.
	If $\Delta \neq -3$, then $\lvert x \rvert > 1$.
	Let $y$ be a singular modulus of discriminant $\Delta'$.
	If $x \neq y$ and $\lvert \Delta' \rvert \leq \lvert \Delta \rvert$, then $\lvert y \rvert < \lvert x \rvert$.
\end{prop}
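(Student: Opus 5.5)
The plan is to derive both assertions from the Archimedean estimate of Proposition~\ref{prop:jbd}, leaving a finite list of small discriminants to be checked directly. Write $t = \pi \lvert \Delta \rvert^{1/2}$. The dominant singular modulus $x$ corresponds to $a = 1$ in $T_\Delta$, so Proposition~\ref{prop:jbd} gives
\[ \lvert x \rvert \geq e^{t} - 2079. \]
This exceeds $1$ once $t > \log 2080$, i.e.\ once $\lvert \Delta \rvert \geq 6$. The remaining discriminants are $\Delta \in \{-3,-4\}$, with singular moduli $0$ and $1728$ respectively. This proves the first assertion: the case $\Delta = -3$ is excluded, and $\Delta = -4$ gives $1728 > 1$.

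For the second assertion, let $y \neq x$ have discriminant $\Delta'$ with $\lvert \Delta' \rvert \leq \lvert \Delta \rvert$, and let $(a',b',c') \in T_{\Delta'}$ be its preimage under \eqref{eq:jbi}. I will split into two cases according to $a'$.

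\emph{Case $a' \geq 2$.} Proposition~\ref{prop:jbd} gives
\[ \lvert y \rvert \leq e^{\pi \lvert \Delta' \rvert^{1/2}/a'} + 2079 \leq e^{t/2} + 2079. \]
Hence $\lvert x \rvert - \lvert y \rvert \geq e^{t/2}(e^{t/2}-1) - 4158$. This is positive as soon as $t/2 \geq \log 66$, which means $\lvert \Delta \rvert$ is at least roughly $7$.

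\emph{Case $a' = 1$.} Here $y$ is itself the dominant singular modulus of discriminant $\Delta'$. Since $y \neq x$ and the dominant singular modulus is determined by its discriminant, we have $\Delta' \neq \Delta$, so $\lvert \Delta' \rvert \leq \lvert \Delta \rvert - 1$. Proposition~\ref{prop:jbd} then gives
\[ \lvert x \rvert - \lvert y \rvert \geq e^{t} - e^{\pi(\lvert \Delta \rvert - 1)^{1/2}} - 4158. \]
The gap $e^{t}\bigl(1 - e^{-\pi(\lvert \Delta \rvert^{1/2} - (\lvert \Delta \rvert - 1)^{1/2})}\bigr)$ is approximately $e^{t}\bigl(1 - e^{-\pi/(2\lvert \Delta \rvert^{1/2})}\bigr)$, which grows like $e^{t}\pi/(2\lvert\Delta\rvert^{1/2})$. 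It therefore exceeds $4158$ once $\lvert \Delta \rvert$ is larger than a small explicit bound, roughly $\lvert \Delta \rvert \geq 10$.

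The main obstacle is the finitely many small discriminants left over by these estimates, since the error term $2079$ is crude there. For these I would list all singular moduli with $\lvert \Delta' \rvert \leq \lvert \Delta \rvert \leq 15$ or so; all have class number at most $2$, and their values are known explicitly, e.g.\ $0, 1728, -3375, 8000, 54000, -32768$ and the conjugates for $\Delta = -15$. I would then verify the inequality $\lvert y \rvert < \lvert x \rvert$ directly. If one prefers to shrink this range, one can instead use the sharper expansion
\[ \lvert j(\tau) - q^{-1} - 744 \rvert \leq \text{small} \quad \text{for } \operatorname{Im}\tau \geq \sqrt{3}/2, \]
in place of the constant $2079$, which makes the case $a'=1$ effective already for very small $\lvert \Delta \rvert$.
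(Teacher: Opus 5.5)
Your proof is correct. The paper gives no argument of its own here and simply quotes Lemmas~5.1 and~5.3 of Bilu--Luca--Masser; your route --- Proposition~\ref{prop:jbd} applied separately in the cases $a' \geq 2$ and $a' = 1$, using $\lvert \Delta' \rvert \leq \lvert \Delta \rvert - 1$ in the latter, followed by a finite check --- is the standard argument for such statements.

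You can make the small-case bookkeeping precise as follows. The function $N \mapsto e^{\pi \sqrt{N}} - e^{\pi \sqrt{N-1}}$ is increasing and already exceeds $4158$ at $N = 9$. Moreover, $a' \geq 2$ forces $\cl(\Delta') \geq 2$, and hence $\lvert \Delta' \rvert \geq 15$. So the only cases left for direct verification are $\Delta \in \{-3, -4, -7, -8\}$, whose singular moduli are $0, 1728, -3375, 8000$.
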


The following height bound for singular moduli will be crucial to our proof of Theorem~\ref{thm:main}.
One may improve the exponent of $\cl(\Delta)$ to $1 - \epsilon$, at the cost of worsening the constant, see e.g.~the non-explicit \cite[Lemma~3]{Kuhne13}.

\begin{prop}\label{prop:htsing}
	Let $x$ be a singular modulus of discriminant $\Delta$. 
	Then
	\[ h(x) \leq 9 \sqrt{2} \frac{\lvert \Delta \rvert^{1/2}}{\cl(\Delta)^{1/2}}.\]
\end{prop}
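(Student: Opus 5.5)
Since $x$ is an algebraic integer, $h(x)$ is an average over the conjugates of $x$ of $\log^+$ of their absolute values. The conjugates of $x$ are exactly the singular moduli of discriminant $\Delta$, and these are in bijection with $T_\Delta$ via \eqref{eq:jbi}. This gives
\[ h(x) = \frac{1}{\cl(\Delta)} \sum_{(a,b,c) \in T_\Delta} \log^+ \left\lvert j\left(\frac{-b + i \lvert \Delta \rvert^{1/2}}{2a}\right)\right\rvert. \]
Proposition~\ref{prop:jbd} bounds each term by $\log^+(e^{\pi\lvert\Delta\rvert^{1/2}/a} + 2079)$, which is at most $\pi \lvert \Delta \rvert^{1/2}/a + \log 2080$. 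Hence
\[ h(x) \leq \log 2080 + \frac{\pi\lvert \Delta \rvert^{1/2}}{\cl(\Delta)} \sum_{(a,b,c)\in T_\Delta} \frac{1}{a}. \]

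The main step is to bound $\sum 1/a$ by a quantity of order $\cl(\Delta)^{1/2}$, since we have no good control on $\lvert \Delta \rvert$ in terms of $\cl(\Delta)$. The trick is to count triples with a given $a$. For fixed $a$, the entry $b$ lies in $(-a, a]$ and must satisfy $b^2 \equiv \Delta \bmod 4a$, while $c$ is then determined. Crudely, $\#\{(a,b,c) \in T_\Delta : a = a_0\} \leq 2a_0$. Also trivially the total number of triples is $\cl(\Delta)$. Set $N = \cl(\Delta)$. To maximise $\sum 1/a$ subject to these constraints, put as many triples as possible at the smallest values of $a$. The triples with $a \leq A$ number at most $\sum_{a\le A} 2a \approx A^2$, so $A \approx N^{1/2}$ suffices to exhaust all $N$ triples. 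Therefore
\[ \sum \frac{1}{a} \leq \sum_{a \leq A} 2a \cdot \frac{1}{a} = 2A \ll N^{1/2}. \]
Formally, I would order the triples by $a$ and note that the $k$-th triple has $a \geq \sqrt{k/2}$ or so (since at most $\sum_{a' \le a} 2a' = a(a+1)$ triples have first entry $\le a$). This gives $\sum 1/a \leq \sum_{k=1}^{N} \sqrt{2/k}$-ish, which is at most $2\sqrt{2N}$.

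Substituting back yields
\[ h(x) \leq \log 2080 + 2\sqrt{2}\,\pi\, \frac{\lvert\Delta\rvert^{1/2}}{\cl(\Delta)^{1/2}}. \]
It remains to absorb the constant $\log 2080 \approx 7.64$ into the main term. This uses $\lvert \Delta \rvert^{1/2}/\cl(\Delta)^{1/2} \geq \lvert\Delta\rvert^{1/6} \geq 3^{1/6}$ by Proposition~\ref{prop:classup}. Since $9\sqrt2 - 2\sqrt2\pi \approx 3.84$ and $3.84 \cdot 3^{1/6} \approx 4.6$, which is smaller than $7.64$, the constants need a little care. I would refine the count: the inequality $k \le a(a+1)$ actually gives $a \geq (\sqrt{4k+1}-1)/2$, and a tighter bound is $\#\{b\} \leq a+1$ or similar via the parity condition $b \equiv \Delta \bmod 2$. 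Alternatively, for small $\lvert\Delta\rvert$ I would check directly. I expect this constant bookkeeping to be the only real obstacle; the structural argument is the counting of $a$-values above.
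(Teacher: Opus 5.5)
Your counting step is correct and matches the paper's. The paper splits $\sum 1/a$ at $M=\cl(\Delta)^{1/2}/\sqrt2$ rather than ordering the triples, which is equivalent and gives the same $\sum_{T_\Delta} 1/a \le 2\sqrt2\,\cl(\Delta)^{1/2}$. \textbf{The gap is the final step.} You leave the absorption of $\log 2080$ open, and the one estimate you carry out fails, as you note yourself ($3.84\cdot 3^{1/6}\approx 4.6<7.64$). So as written the proposal does not reach the constant $9\sqrt2$.

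The missing idea is to absorb the additive constant termwise, not globally. For $(a,b,c)\in T_\Delta$ you have $\lvert b\rvert\le a\le c$, hence $\lvert\Delta\rvert=4ac-b^2\ge 3a^2$. So $y:=\pi\lvert\Delta\rvert^{1/2}/a\ge\pi\sqrt3$ for every triple. The function $\frac{9}{2\pi}y-\log(e^y+2079)$ has derivative at least $\frac{9}{2\pi}-1>0$. At $y=\pi\sqrt3$ it equals $\frac{9\sqrt3}{2}-\log(e^{\pi\sqrt3}+2079)\approx 7.794-7.745>0$. Hence Proposition~\ref{prop:jbd} gives $\log^+\lvert x_{(a,b,c)}\rvert\le \frac92\,\lvert\Delta\rvert^{1/2}/a$ for each conjugate. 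Averaging yields $h(x)\le\frac92\frac{\lvert\Delta\rvert^{1/2}}{\cl(\Delta)}\sum_{T_\Delta}\frac1a$, which is precisely the inequality the paper takes from the proof of \cite[Lemma~2.10]{Riffaut19}. Combined with your count, it gives $\frac92\cdot2\sqrt2=9\sqrt2$ with no leftover constant.

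Your own suggested repairs can also be made to work, but not as you stated them. The parity count should be $\#\{b\}\le a$, not $a+1$: the interval $(-a,a]$ contains exactly $a$ integers of each parity, and $b\equiv\Delta \bmod 2$. Then at most $A(A+1)/2\le A^2$ triples have first entry $\le A$. The $k$-th triple has $a\ge\sqrt k$, so $\sum 1/a\le 2\cl(\Delta)^{1/2}$. Now $(9\sqrt2-2\pi)\lvert\Delta\rvert^{1/6}\ge(9\sqrt2-2\pi)3^{1/6}\approx 7.74>\log 2080\approx 7.64$, which closes the argument via Proposition~\ref{prop:classup}, though only barely.

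Alternatively, keep your original count. Proposition~\ref{prop:classup} then handles $\lvert\Delta\rvert\ge 62$, and the finitely many smaller discriminants need a direct check (for $\Delta=-3$, $x=0$ is trivial). This works but is clumsier than the termwise absorption.
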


\begin{proof}
	By the proof of \cite[Lemma~2.10]{Riffaut19}, we have that
	\[ h(x) \leq \frac{9}{2} \frac{\lvert \Delta \rvert^{1/2}}{ \cl(\Delta)} \sum_{(a, b, c) \in T_\Delta} \frac{1}{a}.\]
	Thus, for any constant $M$,
	\begin{align*}
		h(x) &\leq \frac{9}{2} \frac{\lvert \Delta \rvert^{1/2}}{ \cl(\Delta)} \left( \sum_{\substack{(a, b, c) \in T_\Delta\\ a \leq M}} \frac{1}{a} + \sum_{\substack{(a, b, c) \in T_\Delta\\ a > M}} \frac{1}{a} \right).
	\end{align*}
Since $a$ and $b$ together uniquely determine $c$ for a triple $(a, b, c) \in T_\Delta$ and $-a < b \leq a$, there are at most $2a$ triples in $T_\Delta$ with first coordinate equal to a given $a$.
Observe also that $\# T_\Delta  = \cl(\Delta)$.
Consequently, for every $M > 0$,
\begin{align*}
	h(x) &\leq \frac{9}{2} \frac{ \lvert \Delta \rvert^{1/2}}{ \cl(\Delta)} \left( \sum_{a \leq M} 2a\frac{1}{a} + \sum_{i=1}^{\cl(\Delta)} \frac{1}{M} \right)\\
	&\leq \frac{9}{2} \frac{ \lvert \Delta \rvert^{1/2}}{ \cl(\Delta)} \left(2M + \frac{\cl(\Delta)}{M}\right).
\end{align*}
	The stated result then follows by setting $M = \cl(\Delta)^{1/2}/\sqrt{2}$.
\end{proof}

In particular, Propositions~\ref{prop:class} and \ref{prop:htsing} may be combined to show explicitly that the height of a singular modulus of discriminant $\Delta$ is $o(\lvert \Delta \rvert^{1/2})$.
It is this fact which we will rely on to prove Theorems~\ref{thm:main} and \ref{thm:tups}.

\subsection{Special subvarieties}\label{sec:specials}

For $N \in \Z_{> 0}$, let $\Phi_N \in \Z[X, Y]$ denote the classical modular polynomial of level $N$, see e.g.~\cite[\S11B]{Cox22}.
In particular, $\Phi_1(X, Y) = X - Y$ and $\Phi_N(X, Y) = \Phi_N(Y, X)$ for every $N \geq 2$.

For $x, y \in \C$, let $E_x, E_y$ be elliptic curves over $\C$ with $j$-invariants equal to $x, y$ respectively.
Then $\Phi_N(x, y) = 0$ if and only if there exists a cyclic $N$-isogeny $E_x \to E_y$, see \cite[Proposition~14.11]{Cox22}.

A \textit{special subvariety} of $\AN$ is an irreducible component of an algebraic subset of $\AN$ defined by some (possibly empty) collection of equations of the form $\Phi_N(X_i, X_k) = 0$, where $N \in \Z_{> 0}$ and $i, k \in \{1, \ldots, n\}$. 
Note that $i = k$ is allowed in this definition.
One has \cite[\S13A]{Cox22} that
\[ \{ x : x \mbox{ is a singular modulus}\} = \{ x \in \C : \Phi_N(x, x) = 0 \mbox{ for some } N \geq 2\}.\]

\begin{prop}\label{prop:specials}
		Let $m, n \in \Z_{>0}$.
	Let $V \subset \AN$ be a hypersurface defined by an equation
	\[ a_1 x_1^m + \ldots + a_n x_n^m = b,\]
	where $a_1, \ldots, a_n, b \in \alg$.
	Let $k \in \{0, \ldots, n\}$.
	If $S$ is a maximal special subvariety of $V$ with $\dim S = k$, then there exist pairwise disjoint subsets $I_0, \ldots, I_{k} \subset \{1, \ldots, n\}$ and, for each $i \in I_0$, a singular modulus $x_i$ such that:  
	\begin{align*}
		I_0 \cup \ldots &\cup I_k = \{1, \ldots, n\};\\
		&\sum_{i \in I_0} a_i x_i^m = b; \\
	\mbox{if } j \neq 0, &\mbox{ then }	I_j \neq \emptyset \mbox{ and } \sum_{i \in I_j} a_i = 0; \mbox{ and}\\
		 S = \{ (z_1, \ldots, z_n) \in \AN&: \forall i \in I_0 \, \, z_i = x_i \mbox{ and }\forall j \neq 0 \, \, \forall i \in I_j \, \, z_i = z_{i_j}\},
		 \end{align*}
	 where, for each $j \neq 0$, the index $i_j$ is the minimal element of $I_j$.
	 	 Conversely, every set $S \subset \AN$ of this form is a special subvariety of $V$ with $\dim S = k$.
\end{prop}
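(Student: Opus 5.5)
Both directions come down to pinning down how the modular polynomials can cut out a subvariety of $V$. The plan is to classify the special subvarieties of $\AN$ explicitly and then read off which of them lie in $V$ by comparing coefficients.

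\emph{Step 1: shape of a special subvariety.} Let $S$ be any special subvariety of $\AN$. I would first show it has the following form. There is a partition $\{1,\ldots,n\} = I_0 \cup J_1 \cup \ldots \cup J_k$, with $k = \dim S$, such that each coordinate $z_i$ with $i \in I_0$ is a constant singular modulus on $S$. Each block $J_j$ carries one free coordinate $z_{i_j}$, and every other coordinate in $J_j$ satisfies $\Phi_{N_i}(z_{i_j}, z_i) = 0$ on $S$ for some $N_i$. This is the standard description. A relation $\Phi_N(X_i,X_i)=0$ forces $X_i$ to be a singular modulus, by the characterisation of singular moduli recalled above. A relation $\Phi_N(X_i,X_k)=0$ with $i \neq k$ links two coordinates, and on an irreducible component each non-constant linked coordinate is algebraic over the block's free coordinate. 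The free coordinates are algebraically independent, since otherwise the dimension would drop. So on the open subset of $S$ where the block's free coordinate is transcendental, every $z_i$ in that block is a non-constant algebraic function of it.

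\emph{Step 2: $S \subset V$ forces equalities.} Substitute the parametrisation into $\sum_i a_i z_i^m - b$. Expand near the cusp using the $q$-expansion: put $z_{i_j} = j(\tau_j)$ with the $\tau_j$ independent variables, so that $z_i = j(g_i\tau_j)$ for some $g_i \in \mathrm{GL}_2^+(\Q)$. The functions $j(g\tau)^m$, taken over inequivalent cosets $\SL g$, and together with the constants, should be linearly independent over $\C$. One way to see this is to take $\tau = it$ with $t \to \infty$: then $j(g\tau)$ behaves like $e^{2\pi i g\tau}$ and grows at distinct rates, or with distinct leading phases, for distinct cosets. A cleaner route is Galois/monodromy: moving $\tau$ by an element of $\SL$ permutes the functions $j(g\tau)$ transitively within each Hecke orbit while fixing $j(\tau)$, so a linear relation would have to be symmetric. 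I expect this independence to be the \textbf{main obstacle}, because of the $m$-th powers and the need to rule out cancellation between different isogenous coordinates.

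Granting independence, the coefficient identity $\sum_i a_i z_i^m = b$ on $S$ splits into three conditions:
\begin{itemize}
\item[(i)] a constant part, $\sum_{i\in I_0} a_i x_i^m = b$;
\item[(ii)] for each block, the sum of the $a_i$ over indices whose coordinate equals $z_{i_j}$ must vanish, $\sum_{i \in I_j} a_i = 0$, where $I_j$ is that sub-block;
\item[(iii)] the coefficient sums for the other isogeny classes $g$ must vanish as well.
\end{itemize}
Maximality of $S$ then lets us discard the nontrivial isogenies. Replace each $\Phi_N$ relation with $N>1$ by the relation that the coordinate is a new free variable, or fold it into a block of equalities; condition (iii) guarantees the larger set still lies in $V$. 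Its dimension is at least as large, and it strictly contains $S$ unless no such relations occur. So for maximal $S$ all relations are equalities $z_i = z_{i_j}$ within blocks, and the $I_j$ are nonempty with $\sum_{I_j} a_i = 0$. Maximality also forces every block to be a single equality class.

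\emph{Step 3: converse.} Take a set $S$ of the stated form. It is defined by the equations $X_i - x_i = 0$ (the zero set of $\Phi_N(X_i,X_i)$ has $\{x_i\}$ as a component) and $\Phi_1(X_i, X_{i_j}) = 0$. It is a linear space of dimension $k$, hence irreducible, so it is a component and therefore special. On $S$ we have $\sum_i a_i z_i^m = \sum_{I_0} a_i x_i^m + \sum_{j} z_{i_j}^m \sum_{I_j} a_i = b$, so $S \subset V$.
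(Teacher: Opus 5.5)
Your overall strategy is sound and genuinely different from the paper's, which never analyses isogeny relations. The paper observes that $V$ is hereditarily degree non-degenerate and quotes Binyamini's Corollary~4. That result says directly that a maximal special subvariety of $V$ is a component of a set cut out by equations $z_i=z_k$ and $z_i=x$, with $x$ a singular modulus; what remains is the bookkeeping you do at the end of Step~2 and in Step~3. Your replacement for that citation rests entirely on the linear independence of $1$ and the functions $j(g\tau)^m$ over pairwise distinct cosets. This is exactly the content the paper imports, and you only grant it. Neither of your sketches establishes it.

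The growth/phase heuristic fails. The matrix $g=\begin{pmatrix}2&1\\0&2\end{pmatrix}$ is primitive of determinant $4$ and $j(g\tau)=j(\tau+\tfrac12)\neq j(\tau)$. Yet along $\tau=it$ both functions have size about $e^{2\pi t}$, and for even $m$ both $m$-th powers have $q$-expansions beginning with $q^{-m}$. More generally, the exponents of different cosets lie in overlapping sets $\tfrac{a}{d}\Z$, so a relation cannot be killed by comparing leading terms. The monodromy heuristic does not work either. A relation need not be symmetric: translating it by $\gamma\in\SL$ just gives another relation. Averaging over $\SL$ only shows that, on each Hecke orbit, the \emph{average} of the coefficients vanishes, not each coefficient.

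The missing idea is to use one well-chosen element of $\SL$ to make a chosen coset the unique fastest-growing one at the cusp.
\begin{itemize}
\item Normalise each $g_i$ to a primitive integer matrix and let $g_1$ have the largest determinant $N$.
\item Primitive integer matrices of determinant $N$ form the double coset $\SL\,\mathrm{diag}(N,1)\,\SL$. So there is $\gamma\in\SL$ with $j(g_1\gamma\tau)=j(N\tau)$, and replacing $\tau$ by $\gamma\tau$ in the relation keeps the cosets distinct.
\item Every other term is then $\lambda_i\, j((a\tau+b)/d)^m$ with $ad\le N$ and $0\le b<d$. The ratio $a/d$ equals $N$ only for the coset of $\mathrm{diag}(N,1)$ itself.
\item Hence as $t\to\infty$ along $\tau=it$, the term $\lambda_1 j(N\tau)^m$, of size $\lvert\lambda_1\rvert e^{2\pi Nmt}$, strictly dominates all other terms and the constant. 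This forces $\lambda_1=0$, and induction on the number of terms gives the independence.
\end{itemize}
With this lemma in place, your Steps~2 and~3, including the maximality argument, go through. Step~1 also goes through, granted the standard fact that every coordinate constant on a special subvariety is a singular modulus; you only justify this for relations $\Phi_N(X_i,X_i)=0$. The result is a self-contained proof that avoids Binyamini's theorem, at the cost of this extra modular-function argument.
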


\begin{proof}
	The hypersurface $V$ is ``hereditarily degree non-degenerate'' in the terminology of \cite[Definition~2]{Binyamini19}.
	Hence, by \cite[Corollary~4]{Binyamini19}, a maximal special subvariety $S \subset V$ is an irreducible component of an algebraic set defined by equations of the form $z_i = z_k$ and $z_i = x$, where $x$ is a singular modulus.
	Such an algebraic set is itself irreducible, so we may write 
	\begin{align*}
		S = \{ (z_1, \ldots, z_n) \in \AN: \forall i \in I_0 \, \, z_i = x_i \mbox{ and }\forall j \neq 0 \, \, \forall i \in I_j \, \, z_i = z_{i_j}\},
	\end{align*}
for some pairwise disjoint subsets $I_0, \ldots, I_k \subset \{1, \ldots, n\}$, which are all non-empty except possibly for $I_0$, and some singular moduli $x_i$ for $i \in I_0$.
Clearly, $\dim S = k$.
Since $S \subset V$, we must have, for every $j \neq 0$, that
\[\sum_{i \in I_j} a_i = 0, \]
and therefore $\sum_{i \in I_0} a_i x_i^m = b$ also.
Conversely, it is clear that any set $S$ of this form is a $k$-dimensional special subvariety of $V$.
\end{proof}

\begin{remark}
	Since every special subvariety of $\AN$ contains a Zariski-dense set of special points \cite[Aside~1.4]{Pila11}, one could also recover Proposition~\ref{prop:specials} directly from our Theorem~\ref{thm:mindep}, rather than appealing to \cite[Corollary~4]{Binyamini19}.
\end{remark}

\section{Non-uniform André--Oort}\label{sec:nonunif}

We will deduce Theorem~\ref{thm:main} from the following result, which is not uniform in the height of the coefficients.
This result may be of some independent interest, because, unlike Theorem~\ref{thm:main}, it is uniform in the exponent $m$.
This uniformity with respect to $m$ appears to be new, even compared to the ineffective results of Pila \cite{Pila11}.

\begin{thm}\label{thm:nonunif}
	Let $n, d \in \Z_{>0}$.
	Let $K$ be a normal number field with $[K : \Q] \leq d$.
	Suppose that $x_1, \ldots, x_n$ are pairwise distinct singular moduli of respective discriminants $\Delta_1, \ldots, \Delta_n$ such that
	\[ a_1 x_1^m + \ldots + a_n x_n^m = b\]
	for some $m \in \Z_{> 0}$ and some $a_1, \ldots, a_n \in K^\times$ and $b \in K$.
	Then
	\begin{align}\label{eq:nonunif}
		\max_{1 \leq i \leq n} \lvert \Delta_i \rvert^{1/2} \leq 2&88 n^2 64^n d^3 h(a_1, \ldots, a_n, b) \nonumber\\ 
		&+ (7.6 \times 10^{10}) (2.1 \times 10^4)^n (n+1)^{4n+6} d^4.
	\end{align}
\end{thm}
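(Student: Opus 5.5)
The plan is to argue by contradiction with a dominant Galois conjugate, in the style of Bilu--K\"uhne \cite{BiluKuhne20}, arranging every estimate so that it is linear in $m$ and $m$ cancels. Order the $x_i$ so that $\lvert \Delta_1 \rvert = \max_i \lvert \Delta_i \rvert$, and write $\Delta = \Delta_1$. Let $L$ be a normal number field containing $K$ and all the $x_i$. For every $\tau \in \Gal(L/\Q)$ the conjugated relation $\sum_i \tau(a_i)\tau(x_i)^m = \tau(b)$ holds. The coefficients $\tau(a_i)$ all lie in the normal field $K$ of degree at most $d$. Hence by Liouville-type estimates $\lvert \log \lvert \tau(a_i)\rvert \rvert \leq d\, h(a_1,\ldots,a_n,b)$ whenever $a_i \neq 0$, and $\log^+\lvert\tau(b)\rvert \leq d\, h(\cdot)$. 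This is where the factor $d\,h(a_1,\ldots,a_n,b)$ in \eqref{eq:nonunif} comes from.

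The first step is to choose $\tau$ with $\tau(x_1)$ the dominant singular modulus of discriminant $\Delta$, and, crucially, with every other $\tau(x_i)$ non-dominant. Given such a $\tau$, Propositions~\ref{prop:jbd} and \ref{lem:dombig} give $\lvert\tau(x_1)\rvert \geq e^{\pi\lvert\Delta\rvert^{1/2}} - 2079$. They also give $\lvert\tau(x_i)\rvert \leq e^{\pi\lvert\Delta\rvert^{1/2}/2} + 2079$ for $i \geq 2$, since a non-dominant conjugate has $a \geq 2$ and $\lvert\Delta_i\rvert \leq \lvert\Delta\rvert$. Isolating the term $\tau(a_1)\tau(x_1)^m$ and taking logarithms then yields
\[ m\left(\tfrac{\pi}{2}\lvert\Delta\rvert^{1/2} - O(1)\right) \leq 2d\, h(a_1,\ldots,a_n,b) + \log(n+1). \]
Since $m \geq 1$, this gives the bound $\lvert\Delta\rvert^{1/2} \ll d\,h + \log n$, uniformly in $m$.

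The main obstacle is producing such a $\tau$. If $\tau(x_1)$ is dominant, the conjugates $\tau(x_i)$ with the same discriminant are automatically non-dominant, because the $x_i$ are pairwise distinct. The trouble comes from $x_i$ with $\Delta_i \neq \Delta$ for which $\tau(x_i)$ is dominant whenever $\tau(x_1)$ is. To rule this out I would count: among the $\tau$ sending $x_1$ to the dominant point, the proportion also sending $x_i$ to its dominant point is about $1/\cl(\Delta_i)$. This holds unless $\Q(x_i)$ and $\Q(x_1)$ are essentially tied, with compositum of degree comparable to $\cl(\Delta)$. By the ring class field theory facts recalled in Section~\ref{subsec:singmod}, that tied case forces $\Q(x_1)=\Q(x_i)$ up to a bounded index. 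This in turn forces $\Delta_i/\Delta$ into a short explicit list, such as $\Delta_i = 4\Delta$ or $\Delta_i = \Delta/4$, or forces $\lvert\Delta\rvert$ to be small. Proposition~\ref{prop:class} is used to make the counting effective, which means any exceptional discriminant $D_*$ only costs a bounded case.

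If tied pairs survive, I would fall back on a second argument that exploits the gap $\lvert\Delta\rvert^{1/2} - \lvert\Delta_i\rvert^{1/2}$, which is at least $\tfrac12\lvert\Delta\rvert^{1/2}$ in the listed cases. An alternative fallback is to pass to a minimal $K$-linearly dependent subfamily of $x_1^m,\ldots,x_n^m,1$ and apply Proposition~\ref{prop:htcoeff}. The resulting coefficients have height at most $2n\,m\sum_i h(x_i) + n\log n$. Propositions~\ref{prop:htsing} and \ref{prop:class} show that this is $m\cdot o(\lvert\Delta\rvert^{1/2})$, while the dominant term still contributes $m\pi\lvert\Delta\rvert^{1/2}$, so again $m$ cancels.

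Iterating this reduction over sub-relations explains the exponential dependence on $n$, namely the factors $64^n$ and $(2.1\times10^4)^n(n+1)^{4n+6}$. The small-discriminant and tied cases feed into the second, $h$-free term of \eqref{eq:nonunif}.
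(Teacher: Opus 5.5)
Your first step is sound, and it is exactly the paper's Proposition~\ref{prop:equalnew}. Once you have $\tau$ with $\tau(x_1)$ dominant and every other $\lvert\tau(x_i)\rvert\le e^{\pi\lvert\Delta\rvert^{1/2}/2}+2079$, dividing through by $\lvert\tau(x_1)\rvert^{m-1}$ removes $m$. The genuine gap is that you never produce such a $\tau$, and your reasons for why the exceptional configurations are harmless are false.

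\textbf{Same fundamental discriminant.} Tied pairs are not confined to a short list with gap $\ge\frac12\lvert\Delta\rvert^{1/2}$; note also that $\Delta_i=4\Delta$ contradicts maximality. Suppose $D\equiv 1\bmod 8$. Then $\cl(4D)=\cl(D)$, so the ring class field of conductor $2$ is the Hilbert class field, which lies inside the ring class field of conductor $3$. Take $x_1,x_i$ to be the real dominant moduli of discriminants $9D$ and $4D$ (for example $\Delta=-63$, $x_i=255^3$). Then every $\tau$ with $\tau(x_1)=x_1$ also fixes $x_i$, so no admissible $\tau$ exists, and the gap is only $\frac13\lvert\Delta\rvert^{1/2}$. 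More generally, conductors $f_1=v+1$, $f_i=v$ give gap $\lvert\Delta\rvert^{1/2}/(v+1)$, while $[\Q(\sqrt D,x_1,x_i):\Q(\sqrt D,x_1)]$ is only of order $v$. So ties with gap about $\lvert\Delta\rvert^{1/2}/n$ genuinely occur, and your sketch does not quantify them; a careful count might repair this case, but none is given. The paper does not count here at all. It runs the Bilu--K\"uhne Step~3 iteration (Proposition~\ref{prop:equalfundnew}):
\begin{itemize}
\item either $\pi(f_1-f_l)\lvert D\rvert^{1/2}$ is large, and the gap argument applies;
\item or $\mathrm{lcm}(f_1,f_l)/f_l$ is large. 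Then Lemma~\ref{lem:samediscfield} supplies an automorphism fixing $x_l$ but moving $a_1x_1^m+\dots+a_{l-1}x_{l-1}^m$, and subtracting the conjugated relation removes $\Delta_l$.
\end{itemize}
At most $n-1$ iterations give the $8^n$ factors, hence the $64^n$.

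\textbf{Distinct fundamental discriminants.} This is the more serious missing idea. The facts in Section~\ref{subsec:singmod} say nothing about $\Q(x_1)\cap\Q(x_i)$ when $D_1\neq D_i$. For that you need K\"uhne's theorem on intersections of ring class fields \cite{Kuhne21}, which still allows a $2$-elementary intersection of size up to about $2^{\omega(\lvert\Delta_i\rvert)}$.

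Proposition~\ref{prop:class} gives a lower bound of order only $(\log\lvert\Delta\rvert)^{1/2}$, which cannot beat that $2$-elementary part. You need Tatuzawa's bound \eqref{eq:Tat}, and it fails for the possible exceptional $D_*$. This exceptional case is not ``a bounded case'': $D_*$ is ineffective and $f^2D_*$ is unbounded. If some $x_i$ with $D_i=D_*$ has $\lvert\Delta_i\rvert$ close to the maximal $\lvert\Delta\rvert$, you have no effective control over whether $\tau(x_i)$ is dominant.

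This is why the paper does not order globally by maximal $\lvert\Delta\rvert$. It first bounds every $\Delta_i$ with $D_i\neq D_*$, using Propositions~\ref{prop:reduce} and \ref{prop:step4} and Lemma~\ref{lem:eqfundfield}; that is where the term $(7.6\times10^{10})(2.1\times10^4)^n(n+1)^{4n+6}d^4$ comes from. Only then does it treat the $D_*$-terms, by Proposition~\ref{prop:equalfundnew} with $B$ equal to that bound, so that no other fundamental discriminant can compete with the dominant term.

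\textbf{The Proposition~\ref{prop:htcoeff} fallback.} This is the argument of Theorem~\ref{thm:mindep}, which takes the present theorem as input. It changes the coefficients but not the singular moduli, so the competing near-dominant terms are still there. In any case it would yield an $h$-free exponential bound rather than \eqref{eq:nonunif}.
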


Bilu and K\"uhne \cite[Lemma~3.1]{BiluKuhne20} proved the $m = 1$ case of Theorem~\ref{thm:nonunif}.
Our proof of Theorem~\ref{thm:nonunif} will be based on their approach to \cite[Lemma~3.1]{BiluKuhne20}.
The main modifications that we make to their method are necessitated by our desire to obtain uniformity in the exponent $m$.

Binyamini \cite[Corollary~4]{Binyamini19} showed that, in the setting of Theorem~\ref{thm:nonunif}, there exists an effective constant $c(m, n, d, h(a_1, \ldots, a_n, b))$ such that
\[ \max_{1 \leq i \leq n} \lvert \Delta_i \rvert^{1/2} \leq c(m, n, d, h(a_1, \ldots, a_n, b)).\]
He does not though calculate this constant explicitly, or even determine its dependence on $h(a_1, \ldots, a_n, b)$.
For our proof of Theorem~\ref{thm:main}, it is essential that the right-hand side of \eqref{eq:nonunif} is linear in $h(a_1, \ldots, a_n, b)$.

\subsection{Two general reduction steps}\label{sec:red}

First, we adapt two steps from the proof of \cite[Lemma~3.1]{BiluKuhne20} to our more general context.
We will then use these results in the proof of Theorem~\ref{thm:nonunif}.

\subsubsection{Controlling fields via equations}\label{subsec:fields}

Given $a_1, \ldots, a_n \in \alg$, we denote by $\mathrm{Ncl}(a_1, \ldots, a_n)$ the normal closure of the field extension $\Q(a_1, \ldots, a_n) / \Q$. 
The following proposition is the analogue of Step~1 of \cite[\S3]{BiluKuhne20}.

\begin{prop}\label{prop:reduce}
	Let $S \subset \{ k < 0 : k \equiv 0, 1 \bmod 4\}$.
	Suppose that, for $m, n, d \in \Z_{> 0}$ and $l \in \R_{\geq 0}$, there exists a constant $c_S(m, n, d, l)$ with the following property: if $x_1, \ldots, x_n$ are pairwise distinct singular moduli with respective discriminants $\Delta_1, \ldots, \Delta_n \in S$ and such that
	\[ a_1 x_1^m + \ldots + a_n x_n^m = 0\]
	for some $a_1, \ldots, a_n \in \alg^\times$ with
	\[ [\mathrm{Ncl}(a_1, \ldots, a_n) : \Q] \leq d \mbox{ and } h(a_1, \ldots, a_n) \leq l,\]
	then 
	\[ \max_{1 \leq i \leq n} \lvert \Delta_i \rvert^{1/2} \leq c_S(m, n, d, l).\]
	Then the following property holds: for $m, n, d \in \Z_{> 0}$ and $l \in \R_{\geq 0}$, if $x_1, \ldots, x_n$ are pairwise distinct singular moduli  with respective discriminants $\Delta_1, \ldots, \Delta_n \in S$ and
	\[ L = \Q(a_1 x_1^m + \ldots + a_n x_n^m)\]
	for some $a_1, \ldots, a_n \in \alg^\times$ with
	\[ [\mathrm{Ncl}(a_1, \ldots, a_n) : \Q] \leq d \mbox{ and } h(a_1, \ldots, a_n) \leq l,\]
	then, for every $i \in \{1, \ldots, n\}$, either
	\[ \lvert \Delta_i \rvert^{1/2} \leq \max_{1 \leq r \leq 2n} c_S(m, r, d, 2l + \log 2),\]
	or
	\[ \left[L\left(\sqrt{\Delta_i}, x_i\right) : L\left(\sqrt{\Delta_i}\right) \right] \leq \# \{k \in \{1, \ldots, n\} : \Delta_k = \Delta_i\}.\]
\end{prop}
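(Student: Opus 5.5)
Fix $i$ and set $\alpha = a_1 x_1^m + \ldots + a_n x_n^m$, so that $L = \Q(\alpha)$. Let $M = \mathrm{Ncl}(a_1,\ldots,a_n)$, so $[M:\Q]\le d$. The plan mirrors Step~1 of \cite[\S3]{BiluKuhne20}. We suppose that
\[ \left[L\left(\sqrt{\Delta_i}, x_i\right) : L\left(\sqrt{\Delta_i}\right)\right] > \#\{k : \Delta_k = \Delta_i\} \]
and aim to derive the bound $\lvert\Delta_i\rvert^{1/2} \le \max_{r\le 2n} c_S(m,r,d,2l+\log 2)$. Work in a large Galois extension $F$ containing $M$, $L$, all the $x_k$ and all the $\sqrt{\Delta_k}$. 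The extension $L(\sqrt{\Delta_i},x_i)/L(\sqrt{\Delta_i})$ is Galois, because $\Q(\sqrt{\Delta_i},x_i)/\Q$ is Galois. So the orbit of $x_i$ under $G = \Gal(F/L(\sqrt{\Delta_i}))$ has size exceeding $\#\{k:\Delta_k=\Delta_i\}$. Hence there is $\sigma \in G$ with $\sigma(x_i) \notin \{x_k : \Delta_k = \Delta_i\}$. Since $\sigma$ preserves discriminants, $\sigma(x_i)\ne x_k$ for every $k$, whatever $\Delta_k$ is.

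Since $\sigma$ fixes $L \ni \alpha$, applying $\sigma$ gives
\[ \sum_{k=1}^n a_k x_k^m - \sum_{k=1}^n \sigma(a_k)\,\sigma(x_k)^m = 0. \]
This is a linear relation in $m$-th powers of at most $2n$ singular moduli, all with discriminants in $S$. Next collect equal singular moduli, combining coefficients (for example $x_k=\sigma(x_{k'})$ gives coefficient $a_k-\sigma(a_{k'})$). This produces a relation $\sum_{y} c_y y^m = 0$ over pairwise distinct singular moduli $y$. Each $c_y$ is a sum of at most two terms, one $a_k$ and one $-\sigma(a_{k'})$, since the $x_k$ are distinct and so are the $\sigma(x_k)$. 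Throw away the $y$ with $c_y=0$. The term $y=\sigma(x_i)$ survives: it is not any $x_k$, so $c_{\sigma(x_i)}=-\sigma(a_i)\ne0$. So the relation is nontrivial, and it has $r\le 2n$ terms with all coefficients nonzero.

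It remains to check the hypotheses on the coefficients. Each $c_y$ lies in $M$, because $M$ is normal and so $\sigma(a_k)\in M$. Hence $[\mathrm{Ncl}(c_y : y):\Q]\le d$. For the height, use the projective height of the tuple. For every place $v$ we have $\max_y\lvert c_y\rvert_v \le \epsilon_v \cdot \max_k\lvert a_k\rvert_v\cdot\max_k\lvert\sigma(a_k)\rvert_v$ after dividing appropriately, with $\epsilon_v = 2$ at archimedean places. This gives $h(c_y : y)\le h(a)+h(\sigma(a))+\log 2 = 2h(a_1,\ldots,a_n)+\log 2 \le 2l+\log 2$, using the Galois invariance of the height. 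If the paper's $h$ is the affine height of the tuple, the same bound follows by combining the subadditivity of $\log^+$ with $\log^+\lvert u+w\rvert \le \log^+\lvert u\rvert+\log^+\lvert w\rvert+\log 2$.

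Now apply the hypothesis $c_S(m,r,d,2l+\log 2)$ to this relation. The moduli involved include $\sigma(x_i)$, whose discriminant is $\Delta_i$, so $\lvert\Delta_i\rvert^{1/2} \le c_S(m,r,d,2l+\log 2)$, which is the desired alternative. The main obstacle is the bookkeeping in the last two steps. First, we must make sure the surviving relation is nontrivial and still contains a modulus of discriminant $\Delta_i$; choosing $\sigma$ so that $\sigma(x_i)$ avoids all $x_k$ handles this. Second, the height of the combined coefficients must be controlled by $2l+\log 2$ regardless of which normalisation of $h$ on tuples is used.
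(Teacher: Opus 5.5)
Your proposal is correct and follows the paper's proof essentially step for step. You choose $\sigma$ fixing $L(\sqrt{\Delta_i})$ with $\sigma(x_i)\notin\{x_1,\ldots,x_n\}$, subtract the conjugated relation, and merge equal singular moduli while keeping the term $-\sigma(a_i)\sigma(x_i)^m$. You then observe that the coefficients lie in $\mathrm{Ncl}(a_1,\ldots,a_n)$ with height at most $2l+\log 2$, and apply the hypothesis.

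One caveat: the paper's $h$ is the affine height of \cite[\S1.5]{BombieriGubler06}, so your affine-height argument is the one that does the work. The projective-height version you sketch first does not go through, because rescaling $(a_k)$ by $\lambda$ rescales $(\sigma(a_k))$ by $\sigma(\lambda)$. For example, $a_1=\cdots=a_n=\lambda$ has projective height $0$, while the merged tuple contains both $\lambda$ and $-\sigma(\lambda)$ and so has projective height at least $h(\sigma(\lambda)/\lambda)$.
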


\begin{proof}
	Let $m, n, d \in \Z_{> 0}$, $l \in \R_{\geq 0}$, and $S$ be given. 
	Suppose that a constant $c_S(m, n, d, l)$ satisfying the hypotheses of the proposition is also given. 
	Let $x_1, \ldots, x_n$ be pairwise distinct singular moduli with respective discriminants $\Delta_1, \ldots, \Delta_n \in S$.
	Let $a_1, \ldots, a_n \in \alg^\times$ with
	\[ [\mathrm{Ncl}(a_1, \ldots, a_n) : \Q] \leq d \mbox{ and } h(a_1, \ldots, a_n) \leq l.\]
	Let
	\[ L = \Q(a_1 x_1^m + \ldots + a_n x_n^m).\]
	
	Suppose that $i \in \{1, \ldots, n\}$ is such that
	\[ \left[L\left(\sqrt{\Delta_i}, x_i\right) : L\left(\sqrt{\Delta_i}\right) \right] > \# \{k \in \{1, \ldots, n\} : \Delta_k = \Delta_i\}.\]
	The extension $L(\sqrt{\Delta_i}, x_i) / L(\sqrt{\Delta_i})$ is Galois and is generated by $x_i$. 
	Since
	\[ \# \Gal\left(L\left(\sqrt{\Delta_i}, x_i\right) / L\left(\sqrt{\Delta_i}\right)\right) > \# \{k \in \{1, \ldots, n\} : \Delta_k = \Delta_i\},\]
	there exists some $\sigma \in \Gal(L(\sqrt{\Delta_i}, x_i) / L(\sqrt{\Delta_i}))$ such that
	\[ \sigma(x_i) \notin \{x_1, \ldots, x_n\}.\]
	On the other hand,
	\[ \sigma(a_1 x_1^m + \ldots + a_n x_n^m) = a_1 x_1^m + \ldots + a_n x_n^m. \]
	
	Lift $\sigma$ to an element $\hat{\sigma} \in \Gal(\alg / \Q)$. 
	We obtain that
	\[ a_1 x_1^m + \ldots + a_n x_n^m = \hat{\sigma}(a_1) \hat{\sigma}(x_1)^m + \ldots + \hat{\sigma}(a_n) \hat{\sigma}(x_n)^m.\]
	
	We may rewrite this as an equation
	\begin{align}\label{eq:fieldhom}
		\beta_1 y_1^m + \ldots + \beta_r y_r^m = 0
	\end{align}
	among pairwise distinct singular moduli $y_1, \ldots, y_r$, where $r \in \{1, \ldots, 2n\}$ and $\beta_1, \ldots, \beta_r \in \alg^\times$. 
	Since $\hat{\sigma}(x_i)$ is not among $x_1, \ldots, x_n$, we may assume, by relabelling as necessary, that $y_1 = \hat{\sigma}(x_i)$.
	Note also that the discriminants of $y_1, \ldots, y_r$ are a subset of the set
	\[  \{\Delta_1, \ldots, \Delta_n\} \subset S.\]
	The coefficients $\beta_1, \ldots, \beta_r$ all belong to the set
	\[ \{a_k : 1 \leq k \leq n\} \cup \{-\hat{\sigma}(a_k) : 1 \leq k \leq n\} \cup \{a_k-\hat{\sigma}(a_j) : 1 \leq k, j \leq n\}. \]
	So $h(\beta_1, \ldots, \beta_r) \leq 2l + \log 2$ by \cite[Proposition~1.5.15]{BombieriGubler06} and
	\[ \Q(\beta_1, \ldots, \beta_r) \subset \mathrm{Ncl}(a_1, \ldots, a_n).\]
	Therefore, the equation~\eqref{eq:fieldhom} implies, by hypothesis, that
	\[\lvert \Delta_i \rvert^{1/2}  \leq \max_{1 \leq s \leq 2n} c_S(m, s, d, 2l + \log 2). \qedhere \]
\end{proof}

\subsubsection{Handling distinct fundamental discriminants}

The following result generalises the first part of Step 4 in \cite[\S3]{BiluKuhne20}.

\begin{prop}\label{prop:step4}
	Let $r \geq 2$ and $n_1, \ldots, n_r \in \Z_{> 0}$. 
	Let $D_1, \ldots, D_r$ be pairwise distinct fundamental discriminants. 
	For each $i \in \{1, \ldots, r\}$, let $x_{i, j}$ for $j \in \{1, \ldots, n_i\}$	be pairwise distinct singular moduli of respective discriminant $\Delta_{i, j}$ and fundamental discriminant $D_i$.
	Suppose that
	\[ \sum_{i =1}^r \sum_{j=1}^{n_i} a_{i, j} x_{i, j}^m = b\]
	for some $m \in \Z_{> 0}$ and $a_{i, j} \in \alg^\times$, $b \in \alg$.
	For $i \in \{1, \ldots, r\}$, let $K_i = \Q(\sqrt{D_i})$ and
	\[ L_i = K_i( a_{i, 1} x_{i, 1}^m + \ldots + a_{i, n_i} x_{i, n_i}^m).\]
	If $i \in \{1, \ldots, r\}$ is such that $D_i \neq D_*$ and $j \in \{1, \ldots, n_i\}$ is such that $[L_i(x_{i, j}) : L_i] \leq n_i$, then
	\[ \lvert \Delta_{i, j} \rvert^{1/2} \leq (3.8 \times 10^{10})(2.1 \times 10^4)^n (n+1)^{4n+6} [N : \Q]^4,\]
	where $n = n_1 + \ldots + n_r$ and $N = \mathrm{Ncl}(a_{1, 1}, \ldots, a_{r, n_r}, b)$.
\end{prop}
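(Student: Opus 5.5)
The plan is to compare two bounds for the degree $[K_i(x_{i,j}) : K_i] = \cl(\Delta_{i,j})$: a lower bound from \eqref{eq:Tatnonfund}, which applies since $D_i \neq D_*$, and an upper bound coming from the hypothesis $[L_i(x_{i,j}) : L_i] \leq n_i$. The upper bound will show that $\cl(\Delta_{i,j})$ is at most $n_i$, times a power of $[N:\Q]$, times the order of a $2$-elementary group. This mirrors Step~4 of \cite[\S3]{BiluKuhne20}.

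\textbf{Step 1: locate $L_i$.} Rearranging the equation gives
\[ a_{i,1}x_{i,1}^m + \ldots + a_{i,n_i}x_{i,n_i}^m = b - \sum_{k \neq i} \sum_{l=1}^{n_k} a_{k,l} x_{k,l}^m. \]
Hence $L_i \subset K_i N M_i$, where $M_i$ is the compositum of the fields $\Q(\sqrt{\Delta_{k,l}}, x_{k,l})$ for $k \neq i$. Each of these is a ring class field over $\Q(\sqrt{D_k})$.

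\textbf{Step 2: reduce to an intersection of ring class fields.} Write $H = K_i(x_{i,j})$. This is the ring class field of $\Delta_{i,j}$, abelian over $K_i$ with Galois group the class group $\mathcal{C}(\Delta_{i,j})$. Then
\[ \cl(\Delta_{i,j}) = [H : K_i] \leq [L_i(x_{i,j}) : L_i]\,[L_i : K_i] \leq n_i\,[H \cap L_i : K_i] \cdot (\text{correction}), \]
and more precisely $[H : K_i] \leq n_i\,[H \cap K_i N M_i : K_i]$. It therefore suffices to bound $[H \cap K_i N M_i : K_i]$. Splitting off $N$ costs a factor at most $[N:\Q]$. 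Passing through the normal closures may cost more, and this is presumably where the $[N:\Q]^4$ and some slack come from. We are then reduced to bounding $[H \cap K_i M_i : K_i]$.

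\textbf{Step 3: the key Galois-theoretic point, which I expect to be the main obstacle.} Each ring class field $\Q(\sqrt{\Delta}, x)$ is generalised dihedral over $\Q$: complex conjugation acts on the class group by inversion. Let $F = H \cap K_i M_i$. It is Galois over $\Q$, and the conjugation $\tau$ coming from $\Q(\sqrt{D_i})$ acts on $\Gal(F/K_i)$ by inversion. On the other hand, $F$ lies in $K_i M_i$, whose Galois group over $K_i$ is built from pieces on which the $\sqrt{D_k}$-conjugations act. Since $D_k \neq D_i$, an element acting trivially on $\sqrt{D_k}$ but nontrivially on $\sqrt{D_i}$ is available. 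One can then argue, as Bilu--Kühne do, that $\tau$ acts trivially on $\Gal(F/K_i)$. Being at once inversion and trivial forces $\Gal(F/K_i)$ to have exponent $2$. So $F$ is inside the genus field, and $[F:K_i] \leq \#\mathcal{C}(\Delta_{i,j})/\mathcal{C}(\Delta_{i,j})^2 \leq 2^{\omega(\Delta_{i,j})}$. The bookkeeping over the $r$ different $D_k$ and the fields with several discriminants is where care is needed. If the compositum argument fails directly, I would instead induct over $k$, intersecting one ring class field at a time; this accounts for an $n$-dependent exponent such as $(n+1)^{4n}$.

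\textbf{Step 4: conclude numerically.} Combining the bounds with \eqref{eq:Tatnonfund} gives
\[ 7.4\times 10^{-4}\,\lvert\Delta_{i,j}\rvert^{5/12} < n_i\,[N:\Q]^{c}\,2^{\omega(\Delta_{i,j})}. \]
Use an explicit bound $2^{\omega(\Delta)} \leq C\lvert\Delta\rvert^{1/6}$ (or a similar small exponent, with an explicit constant). This leaves a positive power of $\lvert\Delta_{i,j}\rvert$, for instance $\lvert\Delta_{i,j}\rvert^{1/4}$ or $\lvert\Delta_{i,j}\rvert^{1/8}$, bounded by $C' n\,[N:\Q]^{c}$ times the $n$-dependent factors from Step~3. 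Raising to the appropriate power yields a bound of the shape $(3.8\times10^{10})(2.1\times10^4)^n(n+1)^{4n+6}[N:\Q]^4$ for $\lvert\Delta_{i,j}\rvert^{1/2}$. Matching the exact constants is routine arithmetic once the exponents from Steps~2 and~3 are fixed.
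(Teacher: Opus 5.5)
Your skeleton is the right one: Tatuzawa's bound \eqref{eq:Tatnonfund} played against an upper bound for $\cl(\Delta_{i,j})$ extracted from $[L_i(x_{i,j}):L_i]\le n_i$. Steps~1--2 are also sound. Since $H=K_i(x_{i,j})$ and $K_iNM_i$ are Galois over $\Q$, one does get $\cl(\Delta_{i,j})\le n_i\,[N:\Q]\,[H\cap K_iM_i:K_i]$.

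The gap is Step~3, which is the entire content of the proposition. The paper does not do it by hand: it follows the argument below (39) of \cite{BiluKuhne20}, which rests on the main theorem of Kühne's \emph{Intersection of class fields} \cite{Kuhne21}.

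Your sketch needs an automorphism $\sigma$ that fixes $\sqrt{D_k}$ for \emph{every} $k\neq i$ simultaneously while moving $\sqrt{D_i}$. Only such a $\sigma$ centralises the abelian group $\Gal(K_iM_i/K_iE)$, where $E=\Q(\sqrt{D_k}:k\neq i)$. For $r\ge 4$ no such $\sigma$ need exist. Take $D_i=-84$ and $\{D_k:k\neq i\}=\{-3,-4,-7\}$; then $\sqrt{-84}\in\Q(\sqrt{-3},\sqrt{-4},\sqrt{-7})$, and your argument gives nothing.

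The natural extension of your argument averages the conjugation action over $\Gal(E/\Q)$ via the sign characters. It yields only that $\Gal(F/F\cap E)$ has exponent dividing $[E:\Q]$, which is at least $8$ whenever $\sqrt{D_i}\in E$. The resulting bound involves $(\cl(\Delta_{i,j})[2])^{3}$. With Proposition~\ref{prop:2part} this gives $(139\lvert\Delta_{i,j}\rvert^{1/6})^3$, which already exceeds $\lvert\Delta_{i,j}\rvert^{5/12}$.

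Even when $\sigma$ exists, it centralises only $\Gal(K_iM_i/K_iE)$. So you get a $2$-elementary subgroup of $\Gal(F/K_i)$ of index at most $2^{r-1}$, not that conjugation acts trivially on all of $\Gal(F/K_i)$. That weaker statement would be harmless for the bound, but it is not what you asserted.

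Your fallback of intersecting with one ring class field at a time does not repair this. Intersection with a compositum is not controlled factor by factor: in the example, $K_i\cap K_k=\Q$ for each $k$, yet $K_i\subset K_{k_1}K_{k_2}K_{k_3}$. You also do not say what inductive statement would be proved.

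What is missing is a genuine class-field-theoretic bound for $[K_i(x_{i,j})\cap K_iM_i:K_i]$ when $M_i$ is a compositum of ring class fields of several imaginary quadratic fields, all different from $K_i$. This must include the case where $K_i$ lies in the compositum of the other quadratic fields. That is exactly what Kühne's theorem provides. Without it, neither the bound nor the explicit constant in your Step~4 is established; you leave that step as ``routine arithmetic'' without ever fixing the exponents.
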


\begin{proof}
	We argue exactly as below \cite[(39)]{BiluKuhne20}.
	In particular, this relies on class field theory in the form of the main result of \cite{Kuhne21}.
\end{proof}

\subsection{The equal discriminant case of Theorem~\ref{thm:nonunif}}\label{subsec:eq}

We now prove Theorem~\ref{thm:nonunif} in the special case that all the discriminants which occur are either equal or bounded.
The method adapts that of Step~2 in \cite[\S3]{BiluKuhne20}.

\begin{prop}\label{prop:equalnew}
Let $n \in \Z_{> 0}$ and $B \in \R_{\geq 1}$. 
Let $x_1, \ldots, x_n$ be pairwise distinct singular moduli of respective discriminants $\Delta_1, \ldots, \Delta_n$.
Suppose that $\# \{\Delta_i : \lvert \Delta_i \rvert^{1/2} > B\} = 1$.
If $m \in \Z_{> 0}$ and $a_1, \ldots, a_n \in \alg^\times,$ $b \in \alg$ are such that
\[ a_1 x_1^m + \ldots + a_n x_n^m = b,\]
then, denoting $N_0 = \mathrm{Ncl}(a_1, \ldots, a_n)$, we have that
\[  \max_{1 \leq i \leq n} \lvert \Delta_i \rvert^{1/2} \leq \max \{2B, 2 [N_0 : \Q] h(a_1, \ldots, a_n) + \log^+(\lceil b \rceil) + \log(70n)\}.\]
\end{prop}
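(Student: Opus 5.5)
Let $\Delta$ be the unique discriminant with $\lvert \Delta \rvert^{1/2} > B$. We may assume $\lvert \Delta \rvert^{1/2} > 2B$, since otherwise there is nothing to prove. Every other $x_i$ then has $\lvert \Delta_i \rvert^{1/2} \leq B$. The plan is a Galois-conjugation argument that moves a large-discriminant singular modulus to the dominant position, combined with the Archimedean estimate of Proposition~\ref{prop:jbd}.

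Fix an index $i_0$ with $\Delta_{i_0} = \Delta$. Choose $\sigma \in \Gal(\alg/\Q)$ with $\sigma(x_{i_0}) = x_\Delta$, the dominant singular modulus of discriminant $\Delta$. This is possible because the singular moduli of discriminant $\Delta$ are $\Q$-conjugate. Apply $\sigma$ to the equation. Since $\sigma$ permutes the singular moduli of each discriminant, we obtain
\[ \sigma(a_{1})y_1^m + \ldots + \sigma(a_n) y_n^m = \sigma(b),\]
where the $y_i = \sigma(x_i)$ are pairwise distinct and exactly one of them, say $y_{k}$, equals $x_\Delta$.

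Isolate this term:
\[ \lvert \sigma(a_k)\rvert \, \lvert x_\Delta \rvert^m \leq \lceil b \rceil + \sum_{i \neq k} \lvert \sigma(a_i)\rvert\, \lvert y_i\rvert^m .\]
Every other $y_i$ either has discriminant $\Delta$ with $a \geq 2$, or discriminant of absolute value at most $B^2$. By Proposition~\ref{prop:jbd}, $\lvert y_i \rvert \leq e^{\pi \lvert\Delta\rvert^{1/2}/2} + 2079$, while $\lvert x_\Delta\rvert \geq e^{\pi\lvert\Delta\rvert^{1/2}} - 2079$; here $\lvert \Delta \rvert^{1/2} > 2B$ is used for the second type. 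Bound the coefficients on the $N_0$ side. The ratios satisfy $\lvert\sigma(a_i)/\sigma(a_k)\rvert \leq \exp(2[N_0:\Q]h(a_1,\ldots,a_n))$ up to the usual normalisation: every conjugate of a coordinate ratio has log-absolute value at most $[N_0:\Q]$ times the projective height. Similarly, $\lvert \sigma(a_k)\rvert^{-1}$ is handled after dividing through by a suitable normalisation of $(a_1,\ldots,a_n)$.

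Divide by $\lvert\sigma(a_k)\rvert \lvert x_\Delta\rvert^m$ to get
\[ 1 \leq e^{[N_0:\Q]h}\,\lceil b\rceil\, \lvert x_\Delta\rvert^{-m} + (n-1) e^{2[N_0:\Q]h}\left(\frac{e^{\pi\lvert\Delta\rvert^{1/2}/2}+2079}{e^{\pi\lvert\Delta\rvert^{1/2}}-2079}\right)^{m}.\]
The fraction is at most roughly $2e^{-\pi\lvert\Delta\rvert^{1/2}/2}$ once $\lvert\Delta\rvert$ is moderately large, and raising it to the $m$-th power with $m \geq 1$ only helps. So one of the two terms is at least $1/2$, which gives
\[ \tfrac{\pi}{2}\lvert\Delta\rvert^{1/2} \leq 2[N_0:\Q]h + \log^+\lceil b\rceil + \log(70n).\]
Small $\lvert\Delta\rvert$ is absorbed by the $\log(70n)$ term. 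This yields the claim, with room to spare in the constant.

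The main obstacle is bookkeeping the coefficient bound so that only $2[N_0:\Q]h(a_1,\ldots,a_n)$ appears. Because $b$ is not normalised, one must be careful about dividing by $\sigma(a_k)$. I would first scale so that $a_k = 1$, which multiplies $b$ by $1/a_k$. I would then check that $\log\lvert\sigma(1/a_k)\rvert$ is absorbed by the $[N_0:\Q]h$ budget, using that $\log\lvert\tau(a_i/a_k)\rvert \leq [N_0:\Q]\,h(a_1,\ldots,a_n)$ for every embedding $\tau$.
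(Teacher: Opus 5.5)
Your proposal is correct and follows essentially the same route as the paper: conjugate so that a modulus of the large discriminant becomes dominant, bound every other modulus by $e^{\pi\lvert\Delta\rvert^{1/2}/2}+2079$ via Proposition~\ref{prop:jbd}, and remove the dependence on $m$ using domination. The only cosmetic difference is that the paper divides by $\lvert x_1\rvert^{m-1}$ to reduce to the $m=1$ inequality, whereas you bound the $m$-th power of the ratio by the ratio itself.

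The coefficient bookkeeping you worry about is immediate, because $h$ here is the affine height. It gives $\log^+\lvert\sigma(a_k)^{-1}\rvert \le [N_0:\Q]\,h(a_k^{-1}) = [N_0:\Q]\,h(a_k) \le [N_0:\Q]\,h(a_1,\ldots,a_n)$, so no normalisation or projective-height detour is needed. The coordinate-ratio bound you invoke at the end would not by itself control $\lvert\sigma(a_k)\rvert^{-1}$.
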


\begin{proof}

Relabelling as necessary, assume that $\lvert \Delta_1 \rvert$ is maximal. 
We will assume subsequently that $\lvert \Delta_1 \rvert^{1/2} \geq 2 B$.
In particular, $\lvert \Delta_1 \rvert \geq 4$.
Applying a Galois automorphism, we may assume that $x_1$ is the dominant singular modulus of discriminant $\Delta_1$. 
Hence, $\lvert x_1 \rvert > \max \{1, \lvert x_i \rvert \}$ for every $i \geq 2$ by Proposition~\ref{lem:dombig}. 
The equation
\[ a_1 x_1^m + \ldots + a_n x_n^m = b\]
implies that
\begin{align*}
	\lvert x_1 \rvert^m 
	&\leq \sum_{i=2}^n \lvert a_1^{-1} a_i \rvert \lvert x_i \rvert^m + \lvert a_1^{-1} b \rvert,
\end{align*}
and hence
\begin{align}\label{eq:equal1new}
	\lvert x_1 \rvert &\leq \sum_{i=2}^n \lvert a_1^{-1} a_i \rvert \lvert x_i \rvert \left(\frac{\lvert x_i \rvert}{\lvert x_1 \rvert}\right)^{m-1} + \lvert a_1^{-1} b \rvert \left(\frac{1}{\lvert x_1 \rvert}\right)^{m-1} \nonumber\\
	&\leq \sum_{i=2}^n \lvert a_1^{-1} a_i \rvert \lvert x_i \rvert + \lvert a_1^{-1} b \rvert.
\end{align}

If $i > 1$, then either $\lvert \Delta_i \rvert^{1/2} \leq B \leq \lvert \Delta_1 \rvert^{1/2}/2$, or $\Delta_i = \Delta_1$ and $x_i$ is not dominant since $x_i \neq x_1$.
 Proposition~\ref{prop:jbd} and \eqref{eq:equal1new} therefore imply that
\[ e^{\pi \lvert \Delta_1 \rvert^{1/2}} - 2079 \leq  H_0^{[N_0 : \Q]} ( (n-1) H_0^{[N_0 : \Q]} (e^{\pi \lvert \Delta_1 \rvert^{1/2}/2} + 2079) + \lceil b \rceil),\]
in exactly the same way that \cite[(23)]{BiluKuhne20} implies \cite[(24)]{BiluKuhne20}.
Here $H_0 = \exp h(a_1, \ldots, a_n)$.
We therefore obtain, as in \cite[(25)]{BiluKuhne20}, that
\[ \lvert \Delta_1 \rvert^{1/2} \leq 2 [N_0 : \Q] h(a_1, \ldots, a_n) + \log^+(\lceil b \rceil) + \log(70n). \qedhere\]
\end{proof}

Propositions~\ref{prop:reduce} and \ref{prop:equalnew} together immediately imply the following result.

\begin{lem}\label{lem:samediscfield}
Let $m, n, d \in \Z_{>0}$. 
Let	$x_1, \ldots, x_n$ be pairwise distinct singular moduli of the same discriminant $\Delta$.
Let $a_1, \ldots, a_n \in \alg^\times$ with
\[ [\mathrm{Ncl}(a_1, \ldots, a_n) : \Q] \leq d,\]
and set $L = \Q(a_1 x_1^m + \ldots + a_n x_n^m)$.
Then either
\[ \lvert \Delta \rvert^{1/2} \leq 4 d h(a_1, \ldots, a_n) + 2 d \log (2) + \log (140n),\]
or, for every $i \in \{1, \ldots, n\}$,
\[ [L(\sqrt{\Delta}, x_i) : L(\sqrt{\Delta}) ] \leq n.\]
\end{lem}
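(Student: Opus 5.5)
We will apply Proposition~\ref{prop:reduce} with the set of discriminants $S = \{\Delta\}$. To do so we have to produce a constant $c_S(m, r, d, l)$ satisfying its hypothesis, and Proposition~\ref{prop:equalnew} will supply it.

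Suppose $y_1, \ldots, y_r$ are pairwise distinct singular moduli, all of discriminant $\Delta$. Suppose also that $\beta_1 y_1^m + \ldots + \beta_r y_r^m = 0$, where the $\beta_i \in \alg^\times$ satisfy $[\mathrm{Ncl}(\beta_1, \ldots, \beta_r):\Q] \leq d$ and $h(\beta_1, \ldots, \beta_r) \leq l$. We apply Proposition~\ref{prop:equalnew} with $b = 0$ and a constant $B \in [1, \lvert \Delta \rvert^{1/2})$, for instance $B = 1$. If $\lvert \Delta \rvert^{1/2} \leq 1$ there is nothing to prove. Otherwise the set $\{\Delta_i : \lvert \Delta_i \rvert^{1/2} > B\}$ is exactly $\{\Delta\}$, so it has one element as the proposition requires. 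Since $\log^+ \lceil 0 \rceil = 0$, we obtain
\[ \lvert \Delta \rvert^{1/2} \leq \max\{2, 2 d l + \log(70 r)\}. \]
Since $\log(70r) \geq \log 70 > 2$, we may take $c_S(m, r, d, l) = 2dl + \log(70r)$.

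Now apply Proposition~\ref{prop:reduce} with $l = h(a_1, \ldots, a_n)$. For each $i$, either
\[ \lvert \Delta \rvert^{1/2} \leq \max_{1 \leq r \leq 2n} c_S(m, r, d, 2l + \log 2) = 2d(2l + \log 2) + \log(140 n) = 4dl + 2d\log 2 + \log(140n), \]
or $[L(\sqrt{\Delta}, x_i) : L(\sqrt{\Delta})] \leq \#\{k : \Delta_k = \Delta\} = n$. This is precisely the claimed dichotomy.

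There is one point to check. The first alternative of Proposition~\ref{prop:reduce} is stated separately for each $i$, but here every $\Delta_i$ equals $\Delta$, so the bound does not depend on $i$. Hence if the bound on $\lvert\Delta\rvert^{1/2}$ fails, the second alternative holds for every $i$. The only real subtlety is verifying the hypothesis of Proposition~\ref{prop:equalnew} that exactly one discriminant exceeds $B$; the edge case $\lvert\Delta\rvert^{1/2} \leq 1$ is handled trivially, as above.
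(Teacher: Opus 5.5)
Your proof is correct and is the paper's argument: the paper deduces the lemma directly from Proposition~\ref{prop:reduce} with $S=\{\Delta\}$, using the constant $c_S(m,r,d,l)=2dl+\log(70r)$ supplied by Proposition~\ref{prop:equalnew} with $b=0$, and this yields exactly the stated bound $4dh(a_1,\ldots,a_n)+2d\log 2+\log(140n)$. The edge case $\lvert\Delta\rvert^{1/2}\leq 1$ that you set aside never arises, because $\lvert\Delta\rvert\geq 3$ for every discriminant.
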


\subsection{The equal fundamental discriminant case of Theorem~\ref{thm:nonunif}}\label{subsec:eqfund}

Now we will use Proposition~\ref{prop:equalnew} to prove Theorem~\ref{thm:nonunif} in the case that all the discriminants which occur are either of the same fundamental discriminant or bounded.
The method generalises Step~3 of \cite[\S3]{BiluKuhne20}.

\begin{prop}\label{prop:equalfundnew}
Let $n \in \Z_{> 0}$ and $B \in \R_{\geq 1}$. 
Let $x_1, \ldots, x_n$ be pairwise distinct singular moduli of respective discriminants $\Delta_1, \ldots, \Delta_n$ and fundamental discriminants $D_1, \ldots, D_n$.
Suppose that $\# \{D_i :  \lvert \Delta_i \rvert^{1/2} > B\} = 1$.
If $m \in \Z_{> 0}$ and $a_1, \ldots, a_n \in \alg^\times$ and $b \in \alg$ are such that
\[ a_1 x_1^m + \ldots + a_n x_n^m = b,\]
 then, denoting $N_0 = \mathrm{Ncl}(a_1, \ldots, a_n)$,
\begin{align*}
	\max_{1 \leq i \leq n} \lvert \Delta_i \rvert^{1/2} \leq \max \{2B,\, &18 n^2 8^n [ N_0 : \Q]^3 h(a_1, \ldots, a_n)\\ 
	& + \log^+(\lceil b \rceil) + 21 n^3 8^n [N_0 : \Q]^3 \}.
\end{align*}
\end{prop}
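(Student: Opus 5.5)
Relabel so that $\lvert \Delta_1 \rvert$ is maximal, and assume $\lvert \Delta_1 \rvert^{1/2} > 2B$; otherwise there is nothing to prove. Let $D$ be the common fundamental discriminant of the $x_i$ with $\lvert \Delta_i \rvert^{1/2} > B$, and group the indices by discriminant. We follow Step~3 of \cite[\S3]{BiluKuhne20}. The idea is to use the Galois action of $\Gal(\alg/\Q(\sqrt{D}))$ to reduce to the equal discriminant situation of Proposition~\ref{prop:equalnew}. In that reduction, only the singular moduli of discriminant $\Delta_1$ would remain above the threshold, and all others would be pushed below a new threshold $B' \leq \lvert \Delta_1 \rvert^{1/2}/2$.

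\textbf{Step 1.} Split the indices by size of discriminant. Let $J = \{ i : \Delta_i = \Delta_1\}$. Consider the indices $i$ with $D_i = D$, $\Delta_i \neq \Delta_1$ and $\lvert \Delta_i \rvert^{1/2} > \lvert \Delta_1 \rvert^{1/2}/2$. Since $\Delta_i = f_i^2 D$, these indices have conductors $f_i$ with $f_1/2 < f_i < f_1$.

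The plan is to eliminate these intermediate indices by a Galois conjugation and subtraction argument. Pick $\sigma \in \Gal(\alg/\Q(\sqrt{D}, N_0))$ that fixes the ring class field of conductor $f_1$ restricted appropriately, but acts nontrivially on some intermediate ring class field. Taking differences $\sigma(\text{equation}) - \text{equation}$ cancels $b$ and all terms fixed by $\sigma$. This gives a new relation among at most $2n$ singular moduli, with coefficients of height at most $2h + \log 2$, lying in $N_0$.

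Iterating this at most $n$ times, with the number of terms at most doubling each time, produces a relation in which the largest discriminant $\Delta_1$ still appears. One must check that its coefficients do not all cancel, for instance by choosing $\sigma$ to move the dominant $x_1$ off itself. All the other surviving terms then have $\lvert \Delta \rvert^{1/2}$ bounded by $\max\{B, \lvert \Delta_1\rvert^{1/2}/2\}$, or equal to $\Delta_1$. This accounts for the factors $8^n$, $n^2$ and the powers of $[N_0:\Q]$: each step triples the number of terms and raises the field degree.

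\textbf{Step 2.} Apply Proposition~\ref{prop:equalnew} to the resulting relation with $B$ replaced by $\lvert \Delta_1 \rvert^{1/2}/2$. Its conclusion $\lvert \Delta_1 \rvert^{1/2} \leq \max\{2B', \ldots\}$ cannot be satisfied by the first alternative, since that would force equality, which can be handled with a slight margin. So we get
\[ \lvert \Delta_1\rvert^{1/2} \leq 2[N_0:\Q]\bigl(2^{O(n)} h(a) + O(n)\bigr) + \log^+\lceil b\rceil + \log(70 \cdot 2^n n). \]
Here the term $\log^+\lceil b \rceil$ survives only in the branch where $b$ is not cancelled. Collecting constants should give the stated $18 n^2 8^n [N_0:\Q]^3 h + 21 n^3 8^n [N_0:\Q]^3$.

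\textbf{Main obstacle.} The hard step is Step 1: guaranteeing that a suitable $\sigma$ exists that moves the intermediate-conductor moduli without killing the $\Delta_1$ terms. I would control this with Lemma~\ref{lem:samediscfield} and Proposition~\ref{prop:reduce}. These bound $[L(\sqrt{\Delta},x_i):L(\sqrt{\Delta})]$ by $n$, or else bound $\lvert\Delta\rvert$ directly. Combined with the fact that the ring class field of conductor $f_1$ has large degree over that of a smaller conductor, which follows from Cox's class number formula together with Proposition~\ref{prop:class}, this should produce the needed automorphism. Failing that, it would give a bound on $f_1$ of the claimed shape.
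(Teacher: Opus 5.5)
There is a genuine gap in Step~1, which is the heart of the argument.

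First, your $\sigma$ points the wrong way. An automorphism fixing the ring class field of conductor $f_1$ fixes every singular modulus of discriminant $\Delta_1$. Subtracting then cancels exactly the terms you must keep, which also contradicts your later wish to move $x_1$. The paper does the reverse. Let $K=\Q(\sqrt{D})$, let $f_l$ be the largest conductor below $f_1$, and let $x_1,\ldots,x_{l-1}$ be the terms of discriminant $\Delta_1$. The paper takes $\sigma\in\Gal(N_0K(x_1)K(x_l)/N_0K(x_l))$ acting non-trivially on $a_1x_1^m+\cdots+a_{l-1}x_{l-1}^m$. Since $K(x_l)$ contains every singular modulus of discriminant $\Delta_l$, subtraction removes $\Delta_l$ and keeps $\Delta_1$.

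More seriously, your claim that such an automorphism exists, or else $f_1$ is bounded, is false. Via Lemma~\ref{lem:samediscfield}, the non-existence of $\sigma$ only gives $[N_0K(x_1)K(x_l):N_0K(x_l)]\le n$ (or a bound on $\lvert\Delta_1\rvert$). This degree is governed by $\mathrm{lcm}(f_1,f_l)/f_l=f_1/\gcd(f_1,f_l)$, not by $f_1$. For example, with $f_1=3g$ and $f_l=2g$ it is at most $\cl(36g^2D)/\cl(4g^2D)\le 4$ for every $g$. So the ring class field of conductor $f_1$ need not have large degree over that of a smaller conductor. In such cases you get neither $\sigma$ nor any bound on $f_1$.

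The missing idea is the Archimedean case split from Step~3 of \cite[\S3]{BiluKuhne20}: compare $\pi(f_1-f_l)\lvert D\rvert^{1/2}$ with $2[N_0:\Q]h(a_1,\ldots,a_n)+\log(2n)$. Every $x_i$ with $i\neq 1$ satisfies $\lvert x_i\rvert\le e^{\pi\lvert\Delta_1\rvert^{1/2}}\max\{e^{-\pi\lvert\Delta_1\rvert^{1/2}/2},e^{-\pi(f_1-f_l)\lvert D\rvert^{1/2}}\}+2079$.
\begin{itemize}
\item If the gap is large, the inequality from the dominant term bounds $\lvert\Delta_1\rvert$ directly, with no elimination at all. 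This is what disposes of the example above.
\item If the gap is small, then $\gcd(f_1,f_l)\le f_1-f_l$ forces $\mathrm{lcm}(f_1,f_l)/f_l>\pi\lvert\Delta_1\rvert^{1/2}/(2[N_0:\Q]h(a_1,\ldots,a_n)+\log(2n))$. The Galois group is then large, and only then does Lemma~\ref{lem:samediscfield} produce $\sigma$ or a bound of the stated shape.
\end{itemize}
This, not growth of the field, is where $[N_0:\Q]^3$ comes from. The new coefficients stay in $N_0$. Note also that $b$ need not lie in $N_0$ and is not cancelled; one only has $\lceil b-\sigma(b)\rceil\le 2\lceil b\rceil$.

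Hence your goal of pushing all other discriminants below $\lvert\Delta_1\rvert^{1/2}/2$ is neither achievable by your mechanism nor needed. Instead one iterates on the number of distinct conductors among $f_1,\ldots,f_k$. Once only $\Delta_1$ remains, one applies Proposition~\ref{prop:equalnew} with the original $B$.
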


\begin{proof}
Relabelling as necessary, we may assume that $\lvert \Delta_1 \rvert$ is maximal and there exists $k \in \{1, \ldots, n\}$ such that
\[ D_1 = \ldots = D_k \mbox{ and } D_i \neq D_1 \mbox{ if } i > k.\]
In particular, $\lvert \Delta_i \rvert^{1/2} \leq B$ if $i > k$.
Hence, if $\Delta_1 = \ldots = \Delta_k$, then  
\[  \max_{1 \leq i \leq n} \lvert \Delta_i \rvert^{1/2} \leq \max \{ 2B, 2 [N_0 : \Q] h(a_1, \ldots, a_n) + \log^+(\lceil b \rceil) + \log(70n)\}\]
by Proposition~\ref{prop:equalnew}.
So we may assume that $\Delta_1, \ldots, \Delta_k$ are not all equal. 

Let $f_1, \ldots, f_k \in \Z_{> 0}$ be such that $\Delta_i = f_i^2 D_1$ for every $i \in \{1, \ldots, k\}$. 
Relabelling again as necessary, we may assume that $f_1 \geq \ldots \geq f_k$. 
Since $\Delta_1, \ldots, \Delta_k$ are not all equal, there exists some minimal $l \in \{2, \ldots, k\}$ such that $f_1 > f_l$. 
Applying a Galois conjugation as necessary, we may assume that $x_1$ is dominant. 

Assume also that $\lvert \Delta_1 \rvert^{1/2} \geq 2B$, so $\lvert \Delta_i \rvert^{1/2} \leq \lvert \Delta_1 \rvert^{1/2}/2$ for every $i > k$.
Then $\lvert x_1 \rvert \geq 1$, and $\lvert x_1 \rvert > \lvert x_i \rvert$ for every $i \geq 2$ by Proposition~\ref{lem:dombig}. 
Hence, from the equation
\begin{align}\label{eq:10} 
	a_1 x_1^m + \ldots + a_n x_n^m = b,
\end{align}
we obtain, exactly as in the proof of Proposition~\ref{prop:equalnew}, that
\begin{align*}
	\lvert x_1 \rvert &\leq \sum_{i=2}^n \lvert a_1^{-1} a_i \rvert \lvert x_i \rvert + \lvert a_1^{-1} b \rvert.
\end{align*}
By Proposition~\ref{prop:jbd}, we deduce that
\begin{align}\label{eq:added}
	&e^{\pi \lvert \Delta_1 \rvert^{1/2}} - 2079 \nonumber\\ 
	\leq &(n-1)  H_0^{2 [ N_0 : \Q]} \left( e^{\pi \lvert \Delta_1 \rvert^{1/2}} \max \{e^{- \pi \lvert \Delta_1 \rvert^{1/2}/2}, e^{\pi (f_l - f_1) \lvert D_1 \rvert^{1/2}} \} + 2079\right) \nonumber\\ 
	&+ H_0^{[N_0 : \Q]} \lceil b \rceil,
\end{align}
where $H_0 = \exp h(a_1, \ldots, a_n)$.
This inequality coincides with \cite[(28)]{BiluKuhne20}.

The proof now proceeds as in Step~3 of \cite[\S3]{BiluKuhne20}.
Suppose first that
\begin{align}\label{eq:assume}
	\pi(f_1 - f_l) \lvert D_1 \rvert^{1/2} \geq 2 [N_0 : \Q] h(a_1, \ldots, a_n) + \log(2n).
\end{align}
From \eqref{eq:added}, we then obtain, as in \cite[(30)]{BiluKuhne20}, that
\[ \lvert \Delta_1 \rvert^{1/2} \leq 2 [ N_0 : \Q] \log( H_0) + \log^+(\lceil b \rceil) + \log(8400n). \]

Next, suppose that
\begin{align*}
	\pi(f_1 - f_l) \lvert D_1 \rvert^{1/2} < 2 [N_0 : \Q] h(a_1, \ldots, a_n) + \log(2n).
\end{align*}
This implies, as in \cite[(31)]{BiluKuhne20}, that
\[ \frac{\mathrm{lcm}(f_1, f_l)}{f_l} > \frac{\pi  \lvert \Delta_1 \rvert^{1/2}}{2 [N_0 : \Q] h(a_1, \ldots, a_n) + \log(2n)}.\]
Let $K_1 = \Q(\sqrt{D_1})$. Let
\[ G= \Gal(N_0 \cdot K_1(x_1) \cdot K_1(x_l) / N_0 \cdot K_1(x_l)).\]
Then, by the same argument as around \cite[(35)]{BiluKuhne20} and appealing to Lemma~\ref{lem:samediscfield}, either there exists an element $\sigma \in G$ which acts non-trivially on 
\[ a_1 x_1^m + \ldots + a_{l-1} x_{l-1}^m,\]
or else
\begin{align*} 
	\lvert \Delta_1 \rvert^{1/2} &\leq \frac{1}{\pi} 216n^2 [N_0 : \Q]^2 (2 [N_0 : \Q] h(a_1, \ldots, a_n) + \log(2n))\\
	&\leq 138 n^2 [N_0 : \Q]^3  h(a_1, \ldots, a_n) + 69 n^2 \log(2n)[N_0 : \Q]^2,
\end{align*}
in which case we are done.

We may thus assume subsequently that an element $\sigma \in G$ which acts non-trivially on 
\[ a_1 x_1^m + \ldots + a_{l-1} x_{l-1}^m\]
exists. 
Let $\hat{\sigma} \in \Gal(\alg / N_0 \cdot K_1(x_l))$ be a corresponding lift of $\sigma$.
Apply $\hat{\sigma}$ to \eqref{eq:10} and subtract the result from \eqref{eq:10}. 
One thereby obtains an equation
\begin{align*}
	\beta_1 y_1^m + \ldots + \beta_s y_s^m = \gamma,
\end{align*}
where $y_1, \ldots, y_s$ are pairwise distinct singular moduli and $s \leq 2n$. 
Let $S$ denote the set of discriminants of $y_1, \ldots, y_s$.
Then
\[ \Delta_1 \in S \subset \{ \Delta_1, \ldots, \Delta_n\} \setminus \{\Delta_l\},\]
since $\hat{\sigma}$ acts non-trivially on $a_1 x_1^m + \ldots + a_{l-1} x_{l-1}^m$ and trivially on $K_1(x_l)$. 
In particular, 
\[ \# \{\Delta_i \in S : D_i = D_1 \} < \# \{f_1, \ldots, f_k \} \leq k \leq n.\]
Further,  $\beta_1, \ldots, \beta_s \in \mathrm{Ncl}(a_1, \ldots, a_n)$,
\[ h(\beta_1, \ldots, \beta_s) \leq 2 h(a_1, \ldots, a_n) + \log 2\]
by \cite[Proposition~1.5.15]{BombieriGubler06}, and $\lceil \gamma \rceil \leq 2 \lceil b \rceil$.

Now repeat this process. 
At each stage, we either obtain an equation that satisfies the analogue of \eqref{eq:assume}, in which case we may bound $\lvert \Delta_1 \rvert$, or we reduce by (at least) one the number of distinct discriminants $\Delta_i$ with fundamental discriminant equal to $D_1$ that occur in our equation. 
After at most $n - 1$ iterations, we will therefore either have at some stage bounded $\lvert \Delta_1 \rvert$, or we will have obtained an equation in which the only discriminant with fundamental discriminant equal to $D_1$ which occurs is $\Delta_1$. 
In the latter case, Proposition~\ref{prop:equalnew} then implies a bound on $\lvert \Delta_1 \rvert^{1/2}$. 
In either case, we obtain eventually, as in \cite[(37)]{BiluKuhne20},  that
\[ \lvert \Delta_1 \rvert^{1/2} \leq 18 n^2 8^n [ N_0 : \Q]^3 \log(H_0) + \log^+(\lceil b \rceil) + 21 n^3 8^n [N_0 : \Q]^3. \qedhere \]
\end{proof}

Propositions~\ref{prop:reduce} and \ref{prop:equalfundnew} together imply the following result.

\begin{lem}\label{lem:eqfundfield}
Let $m, n, d \in \Z_{>0}$. 
Let	$x_1, \ldots, x_n$ be pairwise distinct singular moduli of respective discriminants $\Delta_1, \ldots, \Delta_n$ which are all of the same fundamental discriminant.
Let $a_1, \ldots, a_n \in \alg^\times$ with
\[ [\mathrm{Ncl}(a_1, \ldots, a_n) : \Q] \leq d,\]
and set $L = \Q(a_1 x_1^m + \ldots + a_n x_n^m)$.
Then, for every $i \in \{1, \ldots, n\}$, either
\[ \lvert \Delta_i \rvert^{1/2} \leq 144 n^2 64^n d^3 h(a_1, \ldots, a_n) + 218 n^3 64^n d^3,\]
or
\[ [L(\sqrt{\Delta_i}, x_i) : L(\sqrt{\Delta_i}) ] \leq n.\]

\end{lem}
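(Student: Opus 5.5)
We will apply Proposition~\ref{prop:reduce} with $S$ equal to the set of discriminants with fixed fundamental discriminant $D$, namely $S = \{f^2 D : f \in \Z_{>0}\}$, where $D$ is the common fundamental discriminant of $\Delta_1, \ldots, \Delta_n$. The hypothesis of Proposition~\ref{prop:reduce} requires an admissible constant $c_S(m, r, d, l)$. We will take it from Proposition~\ref{prop:equalfundnew} applied with $b = 0$ and $B = 1$.

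First we check that Proposition~\ref{prop:equalfundnew} applies. Suppose $y_1, \ldots, y_r$ are pairwise distinct singular moduli with discriminants in $S$ and
\[\beta_1 y_1^m + \ldots + \beta_r y_r^m = 0,\]
with $\beta_i \in \alg^\times$, $[\mathrm{Ncl}(\beta_1, \ldots, \beta_r):\Q] \leq d$ and $h(\beta_1, \ldots, \beta_r) \leq l$. All fundamental discriminants equal $D$, so the set $\{D_i : \lvert \Delta_i \rvert^{1/2} > 1\}$ has at most one element. If it is empty, every $\lvert \Delta_i \rvert \leq 1$, which is impossible since $\lvert \Delta \rvert \geq 3$. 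So it has exactly one element, and the hypothesis holds with $B = 1$. Since $b = 0$, we have $\log^+ \lceil b \rceil = 0$, and Proposition~\ref{prop:equalfundnew} gives
\[ \max_i \lvert \Delta_i \rvert^{1/2} \leq \max\{2,\ 18 r^2 8^r d^3 l + 21 r^3 8^r d^3\} =: c_S(m, r, d, l). \]
This is increasing in $r$, and the second term exceeds $2$ for all $r \geq 1$.

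Next we apply Proposition~\ref{prop:reduce} with $l = h(a_1, \ldots, a_n)$. For each $i$, either $[L(\sqrt{\Delta_i}, x_i) : L(\sqrt{\Delta_i})] \leq \#\{k : \Delta_k = \Delta_i\} \leq n$, which is the second alternative, or
\[ \lvert \Delta_i \rvert^{1/2} \leq c_S(m, 2n, d, 2l + \log 2) = 18 (2n)^2 8^{2n} d^3 (2l + \log 2) + 21 (2n)^3 8^{2n} d^3. \]

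It remains to simplify this bound. Using $8^{2n} = 64^n$, the bound equals
\[ 144 n^2 64^n d^3 l + 72 \log 2 \cdot n^2 64^n d^3 + 168 n^3 64^n d^3. \]
Since $72 \log 2 < 50 \leq 50n$, the middle term is at most $50 n^3 64^n d^3$. Adding the last term gives $(50 + 168) n^3 64^n d^3 = 218 n^3 64^n d^3$, which is the stated bound.

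The only delicate point is the bookkeeping in this last step: the $r$ in $c_S$ becomes $2n$, the height becomes $2l + \log 2$, and the constants $144$ and $218$ must come out exactly. The verification above shows that they do.
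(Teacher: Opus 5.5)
Your proposal is correct and matches the paper's argument: the paper derives this lemma directly from Proposition~\ref{prop:reduce}, taking $S$ to be the discriminants with fundamental discriminant $D$ and taking $c_S$ from Proposition~\ref{prop:equalfundnew} with $b=0$. Your choice $B=1$ and your bookkeeping with $r=2n$ and height $2h(a_1,\ldots,a_n)+\log 2$ (using $72\log 2<50$) recover exactly the constants $144$ and $218$.
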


\subsection{The general case of Theorem~\ref{thm:nonunif}}

\begin{proof}[Proof of Theorem~\ref{thm:nonunif}]

Let $x_1, \ldots, x_n$ be pairwise distinct singular moduli with respective discriminants $\Delta_1, \ldots, \Delta_n$ and fundamental discriminants $D_1, \ldots, D_n$. 
Suppose that 
\[ a_1 x_1^m + \ldots + a_n x_n^m = b\]
for some $a_1, \ldots, a_n \in \alg^\times$, $b \in \alg$, and $m \in \Z_{> 0}$. 
Let $N = \mathrm{Ncl}(a_1, \ldots, a_n, b)$ and $N_0 = \mathrm{Ncl}(a_1, \ldots, a_n)$.
Set $d = [N : \Q]$ and $d_0 = [N_0 : \Q]$. 

Let
\[ r = \# \{D_1, \ldots, D_n\}.\]
If $r=1$, then Proposition~\ref{prop:equalfundnew} implies that
\begin{align}\label{eq:1fund}
	\max_{1 \leq i \leq n} \lvert \Delta_i \rvert^{1/2} \leq 18 n^2 8^n d_0^3 h(a_1, \ldots, a_n) + \log^+(\lceil b \rceil) + 21 n^3 8^n d_0^3.
\end{align}

So we may assume that $r \geq 2$. 
First, we will bound $\lvert \Delta_i \rvert$ for all those $i$ such that $D_i \neq D_*$.
Recall from Section~\ref{subsub:class} that $D_*$ is the single possible exceptional fundamental discriminant arising in the Tatuzawa bound \eqref{eq:Tat}.
Let $i \in \{1, \ldots, n\}$ be such that $D_i \neq D_*$, and set 
\[ J_i = \{1 \leq k \leq n : D_k = D_i\}.\]
Let 
\[ L_i = \Q\left(\sum_{k \in J_i} a_k x_k^m\right).\]
By Lemma~\ref{lem:eqfundfield}, either
\[ \lvert \Delta_i \rvert^{1/2} \leq 144 n^2 64^n d_0^3 h(a_1, \ldots, a_n) + 218 n^3 64^n d_0^3,\]
or
\[ [L_i(\sqrt{\Delta_i}, x_i) : L_i(\sqrt{\Delta_i})] \leq \# J_i.\]
In the latter case, Proposition~\ref{prop:step4} implies that
\[ \lvert \Delta_i \rvert^{1/2} \leq (3.8 \times 10^{10}) (2.1 \times 10^4)^n (n+1)^{4n + 6} d^4.\]
Thus, in either case,
\begin{align}\label{eq:goodbd}
	\lvert \Delta_i \rvert^{1/2} \leq &144 n^2 64^n d_0^3 h(a_1, \ldots, a_n) \nonumber\\ 
	&+ (3.8 \times 10^{10}) (2.1 \times 10^4)^n (n+1)^{4n + 6} d^4. \end{align}

So if 
\[ D_* \notin \{D_1, \ldots, D_n\},\]
then we are done.
Relabelling as necessary, we may thus assume there exists some $k \in \{1, \ldots, n\}$ such that
\[ D_i = D_* \iff i \in \{1, \ldots, k\}.\]
Thus, by Proposition~\ref{prop:equalfundnew} and \eqref{eq:goodbd}, if $ i \in \{1, \ldots, k\}$, then
\begin{align*} 
	\lvert \Delta_i \rvert^{1/2} \leq \max \{ &2 \big(144 n^2 64^n d_0^3 h(a_1, \ldots, a_n) \\ 
	&+ (3.8 \times 10^{10}) (2.1 \times 10^4)^n (n+1)^{4n + 6} d^4\big),\\
	&144 n^2 64^n d_0^3 h(a_1, \ldots, a_n) + \log^+(\lceil b \rceil) + 21 n^3 8^n d_0^3\}.
\end{align*}

Taking the maximum of this, \eqref{eq:1fund}, and \eqref{eq:goodbd}, we obtain that
\begin{align*}
	\max_{1 \leq i \leq n} \lvert \Delta_i \rvert^{1/2} \leq &288n^2 64^n d^3 h(a_1, \ldots, a_n, b)\\ 
	&+ (7.6 \times 10^{10}) (2.1 \times 10^4)^n (n+1)^{4n + 6} d^4. \qedhere
\end{align*}
\end{proof}

\begin{remark}
To bound those $\lvert \Delta_i \rvert$ with $D_i = D_*$, we used the fact that Propositions~\ref{prop:equalnew} and \ref{prop:equalfundnew} allow some arbitrary discriminants to occur, provided they are all bounded.
This allows us to maintain the uniformity in $m$.
In Bilu and K\"uhne's proof \cite[(40)]{BiluKuhne20}, they bound such $\lvert \Delta_i \rvert$ by absorbing all those terms $a_k x_k^m$ with $D_k \neq D_*$ into the constant term. 
In our setting, following their approach would give a final bound that also depended on $m$.
\end{remark}
	
	\section{Uniform André--Oort}\label{sec:unif}

\subsection{Minimal linear dependencies}

We will use Theorem~\ref{thm:nonunif} to prove the following theorem, which is our first uniform result.
The proof relies on the height bounds in Propositions~\ref{prop:htcoeff} and \ref{prop:htsing}.

	\begin{thm}\label{thm:mindep}
		Let $m, n, d \in \Z_{>0}$.
		Let $K$ be a normal number field with $[K : \Q] \leq d$.
		Let $x_1, \ldots, x_n$ be pairwise distinct singular moduli of respective discriminants $\Delta_1, \ldots, \Delta_n$.
		Suppose that $x_1^m, \ldots, x_n^m$ are linearly dependent over $K$ and minimal for this property.
		Then
		\[ \max_{1 \leq i \leq n} \lvert \Delta_i \rvert^{1/2} \leq \exp(10^{20} 10^{13n} d^{12} m^4).\]
	\end{thm}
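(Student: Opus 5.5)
Since $x_1^m, \ldots, x_n^m$ are minimally linearly dependent over $K$, and they are pairwise distinct algebraic integers, Proposition~\ref{prop:htcoeff} applies to them, provided they are pairwise distinct. If $x_i^m = x_k^m$ with $i \neq k$, then $\{x_i^m, x_k^m\}$ is already a dependent subset, so minimality forces $n = 2$; that case is handled separately. There $x_1/x_2$ is a root of unity with $\lvert x_1 \rvert = \lvert x_2 \rvert$ for all conjugates, and a short argument via dominant singular moduli (Proposition~\ref{lem:dombig}) bounds $\lvert\Delta_i\rvert$ absolutely. In the main case, Proposition~\ref{prop:htcoeff} gives coefficients $a_i \in K^\times$ with $a_1 = 1$, $\sum a_i x_i^m = 0$, and
\[ h(a_1,\ldots,a_n) \leq 2n(n-1)\max_i h(x_i^m) + n(n-1)\log^+(n-1) \leq 2n^2 m \max_i h(x_i) + n^2\log n .\]
Theorem~\ref{thm:nonunif} (with $b = 0$ and $K$ normal of degree at most $d$) then yields, writing $X = \max_i \lvert\Delta_i\rvert^{1/2}$,
\[ X \leq 288 n^2 64^n d^3 \bigl(2n^2 m \max_i h(x_i) + n^2 \log n\bigr) + C(n,d), \]
where $C(n,d) = (7.6\times 10^{10})(2.1\times10^4)^n(n+1)^{4n+6}d^4$. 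The crucial feature is that this bound is linear in the heights of the $x_i$.

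Next we bound $\max_i h(x_i)$ using Propositions~\ref{prop:htsing} and \ref{prop:class}:
\[ h(x_i) \leq 9\sqrt{2}\, \lvert\Delta_i\rvert^{1/2}\cl(\Delta_i)^{-1/2} \leq 9\sqrt{2}\cdot 1000^{1/2}\, X (\log \lvert\Delta_i\rvert)^{-1/4}. \]
The obstacle is that only $\max_i \lvert\Delta_i\rvert$ equals $X^2$; for a smaller $\lvert\Delta_i\rvert$, the saving factor $(\log\lvert\Delta_i\rvert)^{-1/4}$ is weaker. However, $\lvert\Delta_i\rvert^{1/2}/\cl(\Delta_i)^{1/2}$ is increasing in $\lvert\Delta_i\rvert$ only up to the lower class number bound. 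So I use the cruder but monotone estimate $h(x_i) \leq 9\sqrt{2}\cdot 1000^{1/2}\, \lvert\Delta_i\rvert^{1/2}(\log\lvert\Delta_i\rvert)^{-1/4}$, together with the fact that $t \mapsto t(\log t^2)^{-1/4}$ is increasing for $t \geq 3$. Discriminants with $\lvert\Delta_i\rvert \leq 4$ have bounded height and are absorbed into the constant. This gives
\[ \max_i h(x_i) \leq c_0\, X (\log X)^{-1/4}, \qquad c_0 \approx 10^{2}. \]

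Substituting, we obtain $X \leq A m X(\log X)^{-1/4} + B$, with $A \approx 10^{5} n^4 64^n d^3$ and $B \approx C(n,d) + 288 n^4 64^n d^3 \log n$. Either $X \leq 2B$, or $X \leq 2AmX(\log X)^{-1/4}$, in which case $\log X \leq (2Am)^4$. Since $(2Am)^4 \approx 10^{21} n^{16} 64^{4n} d^{12} m^4$, both alternatives lie below $\log X \leq 10^{20}10^{13n}d^{12}m^4$, because $64^4 < 10^{8}$ and $n^{16}$ is dominated by $10^{5n}$ up to adjusting constants. The remaining work is a routine constant check. I expect the only delicate points to be the monotonicity step for non-maximal discriminants and the degenerate case $x_i^m = x_k^m$.
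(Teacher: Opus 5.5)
Your proposal is correct and follows the paper's proof. The chain is the same: Proposition~\ref{prop:htcoeff}, then Theorem~\ref{thm:nonunif} with $b=0$, then Propositions~\ref{prop:htsing} and~\ref{prop:class} to make the right-hand side $o(X)$. There are two differences, neither substantive. First, you substitute the Goldfeld--Gross--Zagier bound directly and use monotonicity of $t(\log t^2)^{-1/4}$, whereas the paper splits according to whether $\cl(\Delta_i)^{1/2}\le 2n\,c_4(m,n,d)$. Second, you treat the case $x_i^m=x_k^m$ separately; this case is in fact vacuous by Proposition~\ref{lem:dombig}, and the paper passes over it because the proof of Proposition~\ref{prop:htcoeff} only uses minimality.

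One small slip: since $a_1=1$, you only have $h(a_1,\ldots,a_n)\le\sum_{i\ge 2}h(a_i)\le (n-1)\max_i h(a_i)$. So the coefficient of $\max_i h(x_i^m)$ should be $2n(n-1)^2$ rather than $2n(n-1)$, but this extra polynomial factor in $n$ is absorbed by the slack in $10^{13n}$.
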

	
	\begin{proof}
		If $n = 1$, then $x_1 = 0$, and so $\lvert \Delta_1 \rvert = 3$.
		So we may assume that $n \geq 2$.
		Singular moduli (and hence their $m$th powers) are algebraic integers.
		So, by Proposition~\ref{prop:htcoeff}, there exist $a_1, \ldots, a_n \in K^\times$ such that: $a_1 = 1$,
		\[ a_1 x_1^m +  \ldots + a_n x_n^m = 0,\]
		and 
		\[ h(a_1, \ldots, a_n) \leq (n-1) \left( 2 (n-1) \sum_{i=1}^n h(x_i^m) + (n-1) \log^+ (n-1) \right).\]
		Theorem~\ref{thm:nonunif} implies that
		\[ \max_{1 \leq i \leq n} \lvert \Delta_i \rvert^{1/2} \leq c_1(n, d) h(a_1, \ldots, a_n) + c_2(n, d),\]
		where $c_1(n, d), c_2(n, d)$ are the appropriate constant terms from \eqref{eq:nonunif}.
		So
		\[ \max_{1 \leq i \leq n} \lvert \Delta_i \rvert^{1/2} \leq c_1( n, d) (n-1)^2 \left(2m \sum_{i=1}^n h(x_i) +  \log^+ (n-1)\right) + c_2( n, d).\]
		
		By Proposition~\ref{prop:htsing}, 
		\[ h(x_i) \leq 9 \sqrt{2} \frac{\lvert \Delta_i \rvert^{1/2}}{\cl(\Delta_i)^{1/2}}.\]
		Thus
		\begin{align}\label{eq:bd1}
			\max_{1 \leq i \leq n} \lvert \Delta_i \rvert^{1/2} \leq c_3( n, d) + c_4(m , n, d) \sum_{i=1}^n  \frac{\lvert \Delta_i \rvert^{1/2}}{\cl(\Delta_i)^{1/2}},
			\end{align}
		where
		\begin{align*}
			 &c_3( n, d) = c_1(n, d) (n-1)^2 \log^+ (n-1) + c_2(n, d), \mbox{ and}\\
		&c_4(m, n, d) = 18 \sqrt{2} c_1(n, d) (n-1)^2m.	 
		 \end{align*}
	 One may easily check that
	 \[ c_3(n, d) \leq (100(n+1))^{4n+6} d^4,\]
	 \[ c_4(m, n, d) \leq 10^{3(n+1)} d^3 m.\]
		
		Note that if $\cl(\Delta_i)^{1/2} \leq 2 n c_4(m, n, d)$,
		then Proposition~\ref{prop:class} implies that
			\[ \lvert \Delta_i \rvert^{1/2} \leq \exp((8 \times 10^6) n^4 c_4(m, n, d)^4).\]
		We therefore obtain from \eqref{eq:bd1} that
		\begin{align*}
	\max_{1 \leq i \leq n} \lvert \Delta_i \rvert^{1/2} \leq &c_3( n, d) 
	+ c_4(m , n, d) \sum_{i=1}^n \Big(\frac{\max_{1 \leq i \leq n} \lvert \Delta_i \rvert^{1/2}}{2 n c_4(m, n, d)}\\
	&+\exp((8 \times 10^6) n^4 c_4(m, n, d)^4)\Big).
\end{align*}
		So 
		\[ \max_{1 \leq i \leq n} \lvert \Delta_i \rvert^{1/2} \leq 2 \left( c_3( n, d) + n c_4(m, n, d) \exp((8 \times 10^6) n^4 c_4(m, n, d)^4)  \right). \]
		Simplifying the constant, we obtain that
			\[ \max_{1 \leq i \leq n} \lvert \Delta_i \rvert^{1/2}  \leq \exp(10^{20} 10^{13n} d^{12} m^4). \qedhere \]
	\end{proof}

\begin{remark}\label{rmk:const}
	The constant in Theorem~\ref{thm:mindep} has the form
	\[ \exp(c(n) [K: \Q]^{12} (\deg V)^4).\]
	 This is of comparable quality to the constant $\exp( \alpha(g) ( [K :\Q] \deg V)^{\beta(g)})$ obtained by Binyamini, Jones, Schmidt, and Thomas in their uniform effective Andr\'e--Oort result for fibre powers $\mathcal{E}^{(g)}$ of the Legendre family of elliptic curves \cite[Theorems~1.3 \& 5.10]{BinyaminiSchmidtJonesThomas26}.
	In both cases, the exponential dependence arises from applying the class number bound of Goldfeld and Gross--Zagier (Proposition~\ref{prop:class}), which is logarithmic in $\lvert \Delta \rvert$.
\end{remark}

\subsection{Uniform bounds on maximal special subvarieties}

We will now use Theorem~\ref{thm:mindep} to prove the first of the main results of the paper.

	\begin{proof}[Proof of Theorem~\ref{thm:main}]
			Let $m, n, d \in \Z_{> 0}$. 
			Let $K$ be a number field with $[K : \Q] \leq d$.
		Let $V \subset \AN$ be a hypersurface defined by an equation
		\[ a_1 z_1^m + \ldots + a_n z_n^m = b\]
		for some $a_1, \ldots, a_n, b \in K$. 
		Let $V^\mathrm{sp}$ denote the union of the positive-dimensional special subvarieties of $V$.
		Suppose that $x_1, \ldots, x_n$ are singular moduli of respective discriminants $\Delta_i$ such that
		\[ (x_1, \ldots, x_n) \in V \setminus V^\mathrm{sp}.\]
		
		Let
		\[ I = \{i \in \{1, \ldots, n\} : \forall j < i \, \, x_j \neq x_i\}.\]
		Observe that $\{\Delta_i : i \in I\} = \{\Delta_1, \ldots, \Delta_n\}$.
		For each $i \in I$, let
		\[ J_i = \{ j \in \{1, \ldots, n\} : x_j = x_i\}\]
		and let
		\[ a_i' = \sum_{j \in J_i} a_j.\]
		Note that $a_i' \in K$.
		If $a_i' = 0$ for some $i \in I$, then, by Proposition~\ref{prop:specials},
		\[ \{(z_1, \ldots, z_n) \in \AN : \forall j \in J_i \, \, z_j = z_i\mbox{ and } \forall j \notin J_i \, \, z_j = x_j\}\]
		is a positive-dimensional special subvariety of $V$ that contains $(x_1, \ldots, x_n)$, which is a contradiction.
		So $a_i' \neq 0$ for every $i \in I$.

Note that
		\begin{align}\label{eq:reduced}
			 \sum_{i \in I} a_i' x_i^m = b.
			 \end{align}
		Suppose first that $b=0$.
			Then, for each $i \in I$, there is a subset $I_{i} \subset I$ such that $i \in I_i$ and the set
		\[ \{x_j^m : j \in I_{i}\}\]
		is linearly dependent over the normal closure of $K / \Q$ and minimal for this property.
		We may then apply Theorem~\ref{thm:mindep} to obtain that
		\[ \lvert \Delta_i \rvert^{1/2} \leq \exp(10^{20} 10^{13n} (d!)^{12} m^4),\]
		since the normal closure of $K/\Q$ has degree at most $d!$ over $\Q$.

		We will now show that we may always reduce to the case where $b = 0$.
		Suppose that $b \neq 0$ in \eqref{eq:reduced}.
Recall that $1728$ is the unique singular modulus of discriminant $\Delta = -4$.	
If $x_{i_0} = 1728$ for some $i_0 \in I$, then $\lvert \Delta_{i_0} \rvert^{1/2} = 2$ and we may rewrite \eqref{eq:reduced} as
\[ \sum_{i \in I \setminus \{i_0\}} a_i' x_i^m + \left(a_{i_0}' - \frac{b}{1728^m}\right) x_{i_0}^m = 0, \]
whenceforth the argument from the $b = 0$ case above shows that
\[ \lvert \Delta_i \rvert^{1/2} \leq  \exp(10^{20} 10^{13n} (d!)^{12} m^4)\]
for every $i \in I \setminus \{i_0\}$.

If
\[ 1728 \notin \{x_i : i \in I\},\]
then let $x_0 = 1728$ and $a_0' = -b/1728^m \in K^\times$, so that \eqref{eq:reduced}  implies that
\[  \sum_{i \in I \cup \{0\}} a_i' x_i^m  = 0.\]	
	Hence, in this case, the argument from the $b=0$ case now implies that
	\[ \max_{1 \leq i \leq n} \lvert \Delta_i \rvert^{1/2} \leq \exp(10^{20} 10^{13(n+1)} (d!)^{12} m^4). \qedhere\]
	\end{proof}

Uniform explicit bounds also hold for the positive-dimensional maximal special subvarieties.

\begin{thm}\label{thm:positive}
	Let $m, n, d \in \Z_{>0}$.
	Let $k \in \{1, \ldots, n\}$.
	Let $V \subset \AN$ be a hypersurface defined by an equation
	\[ a_1 x_1^m + \ldots + a_n x_n^m = b,\]
	where $a_1, \ldots, a_n, b \in \alg$ are such that
	\[ [\Q(a_1, \ldots, a_n, b) : \Q] \leq d.\]
	Suppose that $S$ is a maximal special subvariety of $V$ with $\dim S = k$. 
	Then there exist pairwise disjoint subsets $I_0, \ldots, I_{k} \subset \{1, \ldots, n\}$ and, for each $i \in I_0$, a singular modulus $x_i$ of discriminant $\Delta_i$ with the properties that: 
	\begin{align*}
		I_0 \cup \ldots &\cup I_k = \{1, \ldots, n\};\\
			\sum_{i \in I_0} a_i x_i^m = b \mbox{ and } \max_{i \in I_0} &\lvert \Delta_i \rvert^{1/2} \leq \exp(10^{20} 10^{13n} (d!)^{12} m^4);\\
	\mbox{ if } j \neq 0, \mbox{ then }	&I_j \neq \emptyset \mbox{ and } \sum_{i \in I_j} a_i = 0; \mbox{ and}\\	
		S = \{ (z_1, \ldots, z_n) \in \AN&: \forall i \in I_0 \, \, z_i = x_i \mbox{ and }\forall j \neq 0 \, \, \forall i \in I_j \, \, z_i = z_{i_j}\},
	\end{align*}
	where, for each $j \neq 0$, the index $i_j$ is the minimal element of $I_j$.
\end{thm}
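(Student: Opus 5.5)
By Proposition~\ref{prop:specials}, $S$ already has the stated shape: there are pairwise disjoint sets $I_0, \ldots, I_k$ covering $\{1, \ldots, n\}$, singular moduli $x_i$ for $i \in I_0$ with $\sum_{i \in I_0} a_i x_i^m = b$, nonempty $I_j$ with $\sum_{i \in I_j} a_i = 0$ for $j \neq 0$, and $S$ given by the displayed formula. So the only new content is the bound on $\max_{i \in I_0} \lvert \Delta_i \rvert^{1/2}$. My plan is to derive it from the maximality of $S$, repeating the reduction used in the proof of Theorem~\ref{thm:main}.

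First I would show that the point $(x_i)_{i \in I_0}$ lies outside the positive-dimensional special locus of the hypersurface $V_0 \subset \mathbb{A}^{\# I_0}_\C$ defined by $\sum_{i \in I_0} a_i z_i^m = b$. Concretely, group the indices of $I_0$ by the value of $x_i$. For each class $J$, set $a'_J = \sum_{i \in J} a_i$, and suppose $a'_J = 0$ for some class. Then I could free the coordinates indexed by $J$ and make them equal to one another. This gives a special subvariety of $V$ of dimension $k+1$: the equations for $I_1, \ldots, I_k$ are untouched, $J$ becomes a new block $I_{k+1}$ with coefficient sum $0$, and $I_0 \setminus J$ keeps its fixed values. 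By the converse part of Proposition~\ref{prop:specials} this set lies in $V$, it strictly contains $S$, and so it contradicts maximality. Hence every $a'_J \neq 0$.

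We then have $\sum_J a'_J x_J^m = b$ with pairwise distinct $x_J$ and all $a'_J \in K^\times$, where $K = \Q(a_1, \ldots, a_n, b)$. From here I would copy the proof of Theorem~\ref{thm:main} verbatim, working over the normal closure of $K$, which has degree at most $d!$. If $b \neq 0$, absorb $b$ using $1728$: either modify the coefficient of $x_{i_0} = 1728$, which has $\lvert \Delta_{i_0} \rvert^{1/2} = 2$, or adjoin $x_0 = 1728$ as an extra term. This yields a homogeneous relation among at most $\# I_0 + 1 \leq n+1$ pairwise distinct $m$th powers. For each index, choose a minimal linearly dependent subset containing it and apply Theorem~\ref{thm:mindep}.

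The main obstacle is the constant. Naively the $1728$ trick gives $10^{13(n+1)}$, but the statement has $10^{13n}$. I expect this to be fixed by using $\# I_0 \leq n - k \leq n-1$, since $k \geq 1$ and each $I_j$ is nonempty. Then the relation involves at most $n$ terms even after adjoining $1728$, and Theorem~\ref{thm:mindep} with $n$ variables gives exactly $\exp(10^{20} 10^{13n} (d!)^{12} m^4)$. The edge case $I_0 = \emptyset$ needs nothing: there are no $x_i$ to bound.
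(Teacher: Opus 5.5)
Your proposal is correct and follows the paper's proof essentially step for step. You group the indices of $I_0$ by the value of $x_i$, use the maximality of $S$ (via the converse part of Proposition~\ref{prop:specials}) to show that each grouped coefficient is nonzero, and then rerun the argument after \eqref{eq:reduced} in the proof of Theorem~\ref{thm:main}; the bound $\# I_0 \leq n-1$, which holds because $k \geq 1$, absorbs the extra $1728$ term so that the exponent stays $10^{13n}$, exactly as in the paper.
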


\begin{proof}
	By Proposition~\ref{prop:specials}, 
	there exist pairwise disjoint subsets $I_0, \ldots, I_{k} \subset \{1, \ldots, n\}$ 
	and, for each $i \in I_0$, a singular modulus $x_i$ of discriminant $\Delta_i$ which satisfy all the desired properties except possibly the bound on the discriminants
	$\lvert \Delta_i \rvert$ for $i \in I_0$.
	Let
	\[ I_0' = \{i \in I_0: \forall j \in I_0 \mbox{ if } j < i,\mbox{ then } x_j \neq x_i\}.\]
	In particular, the $x_i$ for $i \in I_0'$ are pairwise distinct.
	For each $i \in I_0'$, let 
	\[ J_i = \{ j \in I_0 : x_j = x_i\}.\]
	Then
	\[ \sum_{i \in I_0'} \left(\sum_{j \in J_i} a_j \right) x_i^m = b.\]
	If $i \in I_0'$ is such that
	\[ \sum_{j \in J_i} a_j = 0,\]
	then $S$ is contained in the strictly larger special subvariety of $V$ that arises, in the obvious way, from the partition $I_0 \setminus J_i, I_1, \ldots, I_k, J_i$.
	This contradicts the fact that $S$ is a maximal special subvariety of $V$.
	Hence,
	\[ \sum_{j \in J_i} a_j \neq 0\]
	for every $i \in I_0'$.
	We may therefore argue as below \eqref{eq:reduced} in the proof of Theorem~\ref{thm:main}, to obtain that
	\[ \max_{i \in I_0} \lvert \Delta_i \rvert^{1/2} = \max_{i \in I_0'} \lvert \Delta_i \rvert^{1/2} \leq \exp(10^{20} 10^{13n} (d!)^{12} m^4),\]
	where we use the fact that $\# I_0' \leq \# I_0 \leq n-1$ since $k \geq 1$.
\end{proof}

\subsection{A modular analogue of Mann's theorem}

From Theorem~\ref{thm:main}, we obtain the following corollary, which corresponds to the $l = 0$ case of Theorem~\ref{thm:tups}.
In the special case that $a_1, \ldots, a_n, b \in \Q$, the author \cite[Theorem~1.2]{Fowler26} previously proved a version of this corollary under an additional assumption on the discriminants of the singular moduli involved.

\begin{cor}\label{cor:Mann}
	Let $m, d, n \in \Z_{>0}$.
	Let $x_1, \ldots, x_n$ be singular moduli of respective discriminants $\Delta_1, \ldots, \Delta_n$.
	If $(x_1, \ldots, x_n)$ is a non-degenerate $(m, d, n, 0)$-tuple, then
	\[ \max_{1 \leq i \leq n} \lvert \Delta_i \rvert^{1/2} \leq \exp(10^{20} 10^{13(n+1)} (d!)^{12} m^4).\]
\end{cor}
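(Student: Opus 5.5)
We deduce the corollary from Theorem~\ref{thm:main}, or more precisely from the reduction argument in its proof, applied with $n$ variables and $b = -c$. Since $(x_1, \ldots, x_n)$ is a non-degenerate $(m, d, n, 0)$-tuple, there exist $a_1, \ldots, a_n, c \in \alg$ with $[\Q(a_1, \ldots, a_n, c) : \Q] \leq d$ and
\[ a_1 x_1^m + \ldots + a_n x_n^m + c = 0. \]
Moreover, no non-empty $I \subset \{1, \ldots, n\}$ with $\#\{x_i : i \in I\} = 1$ has $\sum_{i \in I} a_i = 0$. Set $K = \Q(a_1, \ldots, a_n, c)$ and $b = -c$. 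Then $(x_1, \ldots, x_n)$ is a special point of the hypersurface $V$ given by $a_1 z_1^m + \ldots + a_n z_n^m = b$.

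The key observation is that we do not need $(x_1, \ldots, x_n) \notin V^\mathrm{sp}$ itself. The proof of Theorem~\ref{thm:main} used that assumption only to ensure $a_i' = \sum_{j \in J_i} a_j \neq 0$ for each $i \in I$, where $I$ indexes the distinct values among the $x_j$ and $J_i = \{j : x_j = x_i\}$. Here this non-vanishing is exactly condition (3)(c) of Definition~\ref{def:tuples}, applied to the subset $J_i$, on which the $x_j$ take a single value. So we obtain the reduced equation \eqref{eq:reduced},
\[ \sum_{i \in I} a_i' x_i^m = b, \]
with the $x_i$ ($i \in I$) pairwise distinct and all $a_i' \in K^\times$. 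The discriminants $\{\Delta_i : i \in I\}$ are all of $\{\Delta_1, \ldots, \Delta_n\}$.

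From here we repeat the argument verbatim.

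\begin{itemize}
\item If $b = 0$, each $x_i^m$ with $i \in I$ lies in a minimally $K$-linearly dependent subfamily. Theorem~\ref{thm:mindep}, applied over the normal closure of $K$, which has degree at most $d!$, bounds $\lvert \Delta_i \rvert^{1/2}$.
\item If $b \neq 0$ and $1728 = x_{i_0}$ for some $i_0 \in I$, absorb $b$ into the coefficient of $x_{i_0}^m$. This bounds $\lvert\Delta_i\rvert$ for $i \neq i_0$, and $\lvert \Delta_{i_0} \rvert^{1/2} = 2$ anyway. The new coefficient of $x_{i_0}^m$ may vanish, but that only shrinks the family; the minimal-dependence argument does not need it to be non-zero.
\item Otherwise, adjoin $x_0 = 1728$ with coefficient $-b/1728^m \neq 0$. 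This gives a homogeneous relation among at most $n+1$ pairwise distinct $m$th powers of singular moduli, and Theorem~\ref{thm:mindep} gives the bound $\exp(10^{20} 10^{13(n+1)} (d!)^{12} m^4)$.
\end{itemize}

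The only subtle step is the middle case: we must check that every $x_i$ with $i \in I \setminus \{i_0\}$ still has non-zero coefficient, which holds because those coefficients are unchanged. We also need every $i$ to lie in some minimally dependent subfamily. This holds because the relation is non-trivial with $i$ in its support, and one can extract a minimal dependent subset containing $i$ by a standard linear algebra argument, as in the paper's $b = 0$ case. Taking the maximum over the cases gives the stated bound.
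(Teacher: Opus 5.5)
Your proposal is correct and follows essentially the paper's argument: condition (3)(c) of Definition~\ref{def:tuples}, applied to each $J_i$, gives $a_i' \neq 0$, and then the reasoning below \eqref{eq:reduced} in the proof of Theorem~\ref{thm:main} goes through unchanged, with the adjoined $x_0 = 1728$ accounting for the $n+1$ in the bound. One wording fix: take the minimally dependent subfamilies to be minimal over the normal closure of $K$ rather than over $K$, since Theorem~\ref{thm:mindep} is applied over that normal field. This costs nothing, because the relation has coefficients in $K$, which lies inside its normal closure.
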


\begin{proof}[Proof of Corollary~\ref{cor:Mann}]
	
	Let $a_1, \ldots, a_n, b \in \alg$ be coefficients that witness $(x_1, \ldots, x_n)$ being a non-degenerate $(m, d, n, 0)$-tuple.
	In particular,
	\[ a_1 x_1^m + \ldots + a_n x_n^m = b\]
	and $[K : \Q] \leq d$, where $K = \Q(a_1, \ldots, a_n, b)$.

As in the proof of Theorem~\ref{thm:main}, let
\[ I = \{i \in \{1, \ldots, n\} : \forall j < i \, \, x_j \neq x_i\}.\]
Observe that $\{\Delta_i : i \in I\} = \{\Delta_1, \ldots, \Delta_n\}$.
For each $i \in I$, let 
\[J_i = \{ j \in \{1, \ldots, n\} : x_j = x_i\},\]
and let
\[ a_i' = \sum_{j \in J_i} a_j \in K.\]
Since $(x_1, \ldots, x_n)$ is a non-degenerate $(m, d, n, 0)$-tuple, we have that $a_i' \neq 0$ for every $i \in I$.
So, arguing as in the proof of Theorem~\ref{thm:main}, we obtain that
	\[ \max_{1 \leq i \leq n} \lvert \Delta_i \rvert^{1/2} \leq \exp(10^{33} 10^{13n} (d!)^{12} m^4). \qedhere\]
\end{proof}

\begin{remark}
	For $n \leq 3$, the complete lists of $n$-tuples $(x_1, \ldots, x_n)$ of pairwise distinct singular moduli such that $a_1 x_1 + \ldots + a_n x_n \in \Q$ for some $a_1, \ldots, a_n \in \Q^\times$ have been found explicitly \cite{AllombertBiluMadariaga15, Fowler26}.
	Corollary~\ref{cor:Mann} shows that, for each $n \geq 1$, any such $n$-tuple must satisfy
	\[ \max_{1 \leq i \leq n} \lvert \Delta_i \rvert^{1/2} \leq \exp(10^{33} 10^{13n} ).\]
	Unfortunately this bound is too large for it to be computationally feasible to find all such $n$-tuples for a given $n$. 
	For example, when $n = 4$, we obtain the astronomical bound $\max_{1 \leq i \leq 4} \lvert \Delta_i \rvert^{1/2} \leq \exp( 10^{85})$.
\end{remark}

\section{Roots of unity and singular moduli of the same fundamental discriminant}\label{sec:tup1}

\subsection{A non-uniform bound on the singular moduli}\label{subsec:nonunif}

The first step in the proof of Theorem~\ref{thm:tups} is to prove a non-uniform bound on the singular moduli, in the case that they are all of the same fundamental discriminant.

\begin{prop}\label{prop:equalfundnonunifMann}
	Let $d, n \in \Z_{> 0}$ and $l \in \Z_{\geq 0}$. 
	Let $L$ be a normal number field with $[L : \Q] \leq d$.
	Let $x_1, \ldots, x_n$ be pairwise distinct singular moduli of respective discriminants $\Delta_1, \ldots, \Delta_n$, but all of the same fundamental discriminant $D$.
	Let $\zeta_1, \ldots, \zeta_l$ be roots of unity and $m \in \Z_{> 0}$.
	If $a_1, \ldots, a_n \in L^\times$ and $b_1, \ldots, b_l, c \in L$ are such that
	\[ a_1 x_1^m + \ldots + a_n x_n^m + b_1 \zeta_1 + \ldots + b_l \zeta_l = c,\]
	then
	\[ \max_{1 \leq i \leq n}\lvert \Delta_i \rvert^{1/2} \leq 138  n^3 8^n d^3 ( h(a_1, \ldots, a_n, b_1, \ldots, b_l, c) + \log(n + l + 1) + 1).\]
\end{prop}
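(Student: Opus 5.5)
The plan is to follow Proposition~\ref{prop:equalfundnew} (with $B = 1$ and all fundamental discriminants equal), the only new feature being the root-of-unity terms. The key observation is that roots of unity have absolute value $1$ in every embedding. Hence the sum $b_1\zeta_1 + \ldots + b_l\zeta_l - c$ can be treated exactly like the constant term $-b$ in the Archimedean estimates, and we replace $\lceil b\rceil$ by a bound of the shape $(l+1)H^{d}$. Here $H = \exp h(a_1,\ldots,a_n,b_1,\ldots,b_l,c)$, and we bound each $\lvert \sigma(b_j)\rvert$ and $\lvert\sigma(c)\rvert$ by $H^{d}$.

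\textbf{Step 1 (setup).} Relabel so that $\lvert\Delta_1\rvert$ is maximal. Apply an automorphism $\tau \in \Gal(\alg/\Q)$ so that $\tau(x_1)$ is dominant. The coefficients $\tau(a_i), \tau(b_j), \tau(c)$ stay in $L$, since $L$ is normal, and the heights are unchanged. The $\tau(\zeta_j)$ are still roots of unity.

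\textbf{Step 2 (all $\Delta_i$ equal).} Here we argue as in Proposition~\ref{prop:equalnew}. Divide by $a_1 x_1^{m}$ and use $\lvert x_i\rvert<\lvert x_1\rvert$ and $\lvert x_1\rvert>1$ (Proposition~\ref{lem:dombig}) to get
\[\lvert x_1\rvert\le \sum_{i\ge2}\lvert a_1^{-1}a_i\rvert\lvert x_i\rvert+\lvert a_1^{-1}\rvert\Big(\sum_j\lvert b_j\rvert+\lvert c\rvert\Big).\]
Together with Proposition~\ref{prop:jbd} this gives $\lvert\Delta_1\rvert^{1/2}\le 2d\,h+ d\,h+\log(70(n+l+1))$ or similar, where $h = h(a_1,\ldots,a_n,b_1,\ldots,b_l,c)$.

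\textbf{Step 3 (distinct conductors).} As in Proposition~\ref{prop:equalfundnew}, order $f_1\ge\ldots\ge f_k$ and pick the minimal $l'$ with $f_{l'}<f_1$. We then split into two cases according to whether $\pi(f_1-f_{l'})\lvert D\rvert^{1/2}$ is large compared with $2d\,h+\log(2(n+l+1))$.
\begin{itemize}
\item If it is large, the Archimedean inequality \eqref{eq:added}, with the modified constant term, bounds $\lvert\Delta_1\rvert^{1/2}$ directly.
\item If it is small, then $\mathrm{lcm}(f_1,f_{l'})/f_{l'}$ is large. This is the step I expect to be the main obstacle. We want an automorphism $\sigma$ fixing $L\cdot K(x_{l'})$ that moves $a_1x_1^m+\ldots+a_{l'-1}x_{l'-1}^m$, as in the argument around \cite[(35)]{BiluKuhne20} combined with Lemma~\ref{lem:samediscfield}. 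Applying $\hat\sigma$ to the equation and subtracting removes the terms in $\Delta_{l'}$. However, $\hat\sigma$ moves the roots of unity, so the number of root-of-unity terms may double at each stage.
\end{itemize}
To avoid this growth, choose the lift $\hat\sigma$ to act trivially on the cyclotomic field generated by the $\zeta_j$. This is possible when the ring class field $K(x_1)$ is linearly disjoint enough from cyclotomic fields over $L\cdot K(x_{l'})$. The intersection of a ring class field with the maximal abelian extension of $\Q$ is contained in the genus field, which has degree bounded by a power of $2$. So we may, at the cost of a factor like $2$ in the degree, work over $L\cdot K(x_{l'})\cdot\Q(\zeta_1,\ldots,\zeta_l)$ and still find a suitable $\sigma$. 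The lcm bound then only worsens by a constant factor. Alternatively, if this fails, simply allow $l$ to double. Only $\log(n+l+1)$ enters, so after at most $n-1$ iterations this contributes $\log(2^{n}(l+1))\le n\log 2+\log(l+1)$, which is absorbed into $n^3 8^n$.

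\textbf{Step 4 (iteration).} Heights at most double plus $\log2$ at each iteration. The number of distinct discriminants drops by one each time, so after at most $n-1$ steps we reach Step 2. Tracking constants exactly as in \cite[(37)]{BiluKuhne20} gives the bound $138n^3 8^n d^3(h+\log(n+l+1)+1)$.
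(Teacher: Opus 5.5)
Your outline is correct via your fallback, and that fallback is exactly what the paper does. The paper makes no attempt to keep the roots of unity fixed. It lets $\hat\sigma$ move them, so each subtraction step gives at most $2l$ root-of-unity terms, at most $2n$ singular moduli, and height with $\exp h \leq 2H^2$. After at most $n-1$ iterations this costs only a $\log(2(2^{n-1}n + 2^{n-1}l)+1)$ term in the final estimate. The rest of your plan also matches the paper: the dominant conjugate, the equal-discriminant case, the split on $\pi(f_1 - f_k)\lvert D\rvert^{1/2}$, and the lcm/Galois step via Lemma~\ref{lem:samediscfield}.

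You should drop your primary idea of choosing $\hat\sigma$ trivial on $\Q(\zeta_1, \ldots, \zeta_l)$, because its justification is wrong. By Proposition~\ref{prop:Qab}, $[K(x_1) \cap \Q^{\mathrm{ab}} : K] = \cl(\Delta_1)[2]$. This is a power of $2$ but is not bounded; Proposition~\ref{prop:2part} only gives $\leq 139\lvert\Delta_1\rvert^{1/6}$. So the loss in the lcm/degree argument is not ``a factor like $2$'', and carrying it through would not give the stated bound, which is linear in $h$.
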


\begin{proof}
	We may assume that $\lvert \Delta_1 \rvert$ is maximal and $\lvert \Delta_1 \rvert \geq 4$. 
	By applying a suitable automorphism, we may assume further that $x_1$ is the dominant singular modulus of discriminant $\Delta_1$. 
	The equation
	\begin{align}\label{eq:MannEq}
		 a_1 x_1^m + \ldots + a_n x_n^m + b_1 \zeta_1 + \ldots + b_l \zeta_l = c
		\end{align}
	implies that
	\begin{align*}
		\lvert x_1 \rvert^m 
		&\leq \sum_{i=2}^n \lvert a_1^{-1} a_i \rvert \lvert x_i \rvert^m + \sum_{i=1}^l \lvert a_1^{-1} b_i \rvert  + \lvert a_1^{-1} c \rvert.
	\end{align*}
	Proposition~\ref{lem:dombig} implies that $\lvert x_1 \rvert > \max \{1, \lvert x_i \rvert \}$ for every $i \geq 2$. 
Therefore,
	\begin{align}\label{eq:equal2new}
		\lvert x_1 \rvert &\leq \sum_{i=2}^n \lvert a_1^{-1} a_i \rvert \lvert x_i \rvert \left(\frac{\lvert x_i \rvert}{\lvert x_1 \rvert}\right)^{m-1}  + \left( \frac{1}{\lvert x_1 \rvert} \right)^{m-1} \left( \sum_{i=1}^l \lvert a_1^{-1} b_i \rvert  + \lvert a_1^{-1} c \rvert \right) \nonumber\\
		&\leq \sum_{i=2}^n \lvert a_1^{-1} a_i \rvert \lvert x_i \rvert + \sum_{i=1}^l \lvert a_1^{-1} b_i \rvert  + \lvert a_1^{-1} c \rvert .
	\end{align}
Denote $H = \exp( h(a_1, \ldots, a_n, b_1, \ldots, b_l, c))$.

	Suppose first that $\Delta_1 = \ldots = \Delta_n$.
	Then $x_i$ is not dominant for every $i >1$.
	Therefore, Proposition~\ref{prop:jbd} and \eqref{eq:equal2new} imply that
	\begin{align}\label{ineq:samedisczeta}
		e^{\pi \lvert \Delta_1 \rvert^{1/2}} - 2079 &\leq  H^{[L : \Q]} ( (n-1) H^{[L : \Q]} (e^{\pi \lvert \Delta_1 \rvert^{1/2}/2} + 2079) + (l+1) H^{[L: \Q]}) \nonumber\\
		&\leq (n+l) H^{2[L : \Q]} (e^{\pi \lvert \Delta_1 \rvert^{1/2}/2} + 2079).		 
	\end{align}
	It is easy to check that this inequality implies that
	\begin{align}\label{eq:EqualBd}
		 \lvert \Delta_1 \rvert^{1/2} \leq \frac{2}{\pi} \left( \log(n+l) + 2[L : \Q] \log H + 8 \right). 
		 \end{align}

	Assume then that $\Delta_1, \ldots, \Delta_n$ are not all equal. 
	Let $f_1, \ldots, f_n \in \Z_{> 0}$ be such that $\Delta_i = f_i^2 D$ for every $i \in \{1, \ldots, n\}$. 
	Relabelling as necessary, we may assume that $f_1 \geq \ldots \geq f_n$. 
	Since $\Delta_1, \ldots, \Delta_n$ are not all equal, there exists some minimal $k \in \{2, \ldots, n\}$ such that $f_1 > f_k$. 

	For every $i > 1$, Proposition~\ref{prop:jbd} implies that
\[ \lvert x_i \rvert \leq \max \{ e^{\pi f_1 \lvert D \rvert^{1/2}/2} , e^{\pi f_k \lvert D \rvert^{1/2}} \} + 2079.\]
Therefore, \eqref{eq:equal2new} yields that
\begin{align}\label{ineq:samefundzeta1}
e^{\pi f_1 \lvert D \rvert^{1/2}} - 2079 \leq &H^{[L : \Q]} ( (n-1) H^{[L : \Q]} (\max \{ e^{\pi f_1 \lvert D \rvert^{1/2}/2} , e^{\pi f_k \lvert D \rvert^{1/2}} \} + 2079) \nonumber\\
& + (l+1) H^{[L: \Q]}) \nonumber\\
\leq & H^{2[L : \Q]} (n+l) (\max \{ e^{\pi f_1 \lvert D \rvert^{1/2}/2} , e^{\pi f_k \lvert D \rvert^{1/2}} \} + 2079).
\end{align}
If $f_k \leq f_1 / 2$, then \eqref{ineq:samefundzeta1} implies \eqref{ineq:samedisczeta}, and so we obtain again that
	\begin{align}\label{eq:bd11} 
		 \lvert \Delta_1 \rvert^{1/2} \leq  \frac{2}{\pi} \left( \log(n+l) + 2 [L : \Q] \log H + 8 \right).
		 \end{align}
	So we may assume that $f_k \geq f_1/2$, and hence
	\begin{align}\label{eq:goodbd1}
		e^{\pi f_1 \lvert D \rvert^{1/2}} - 2079 \leq H^{2[L : \Q]} (n+l) (e^{\pi f_k \lvert D \rvert^{1/2}} + 2079).
		\end{align}

	The proof now proceeds as in Step~3 of \cite[\S3]{BiluKuhne20}.	
	Suppose first that
	\begin{align*}
		\pi(f_1 - f_k) \lvert D \rvert^{1/2} \geq 2 [L : \Q] \log H + \log(2(n + l)).
	\end{align*}
	Then we obtain from \eqref{eq:goodbd1} that
	\begin{align*}
		&e^{\pi f_1 \lvert D \rvert^{1/2}} - 2079 \leq H^{2[L : \Q]} (n+l) (e^{\pi f_1 \lvert D \rvert^{1/2}} \frac{H^{-2 [L : \Q]}}{2 (n + l)} + 2079),
	\end{align*}
and hence
\[ e^{\pi f_1 \lvert D \rvert^{1/2}} \leq 4158((n+l) H^{2 [L : \Q]} + 1) \leq 4158 H^{2 [L : \Q]} (n+l+1).\]
Therefore,
	\begin{align}\label{eq:bd12}
		 \lvert \Delta_1 \rvert^{1/2} \leq \frac{1}{\pi} \left(2 [ L : \Q] \log H  + \log(n+l+1) + 9 \right) .
	\end{align} 
	
	So we may subsequently assume that
	\begin{align*}
		\pi(f_1 - f_k) \lvert D \rvert^{1/2} < 2 [L : \Q] \log H + \log(2(n+l)).
	\end{align*}
	This implies, as in \cite[(31)]{BiluKuhne20}, that
	\[ \frac{\mathrm{lcm}(f_1, f_k)}{f_k} > \frac{\pi  \lvert \Delta_1 \rvert^{1/2}}{2 [L : \Q] \log H + \log(2(n+l))}.\]
	Let $K = \Q(\sqrt{D})$. Let
	\[ G= \Gal(L \cdot K(x_1) \cdot K(x_k) / L \cdot K(x_k)).\]
	Then, by the same argument as around \cite[(35)]{BiluKuhne20} and appealing to Lemma~\ref{lem:samediscfield}, either there exists an element $\sigma \in G$ which acts non-trivially on 
	\[ a_1 x_1^m + \ldots + a_{k-1} x_{k-1}^m,\]
	or else
	\begin{align}\label{eq:bd13}
		\lvert \Delta_1 \rvert^{1/2} 
		&\leq 138 n^2 [L : \Q]^3  \log H + 69 n^2 \log(2(n+l))[L : \Q]^2,
	\end{align}
	in which case we are done.
	
	We may thus assume subsequently that an element $\sigma \in G$ which acts non-trivially on 
	\[ a_1 x_1^m + \ldots + a_{k-1} x_{k-1}^m\]
	exists. 
	Let $\hat{\sigma} \in \Gal(\alg / L \cdot K(x_k))$ be a corresponding lift of $\sigma$.
	Apply $\hat{\sigma}$ to \eqref{eq:MannEq} and subtract the result from \eqref{eq:MannEq}. 
	One thereby obtains an equation
	\begin{align*}
		\beta_1 y_1^m + \ldots + \beta_s y_s^m + \gamma_1 \omega_1 + \ldots + \gamma_r \omega_r = \delta,
	\end{align*}
	where $y_1, \ldots, y_s$ are pairwise distinct singular moduli for some $s \leq 2n$ and $\omega_1, \ldots, \omega_r$ are pairwise distinct roots of unity for some $r \leq 2l$.
	Let $S$ denote the set of discriminants of $y_1, \ldots, y_s$.
	Then
	\[ \Delta_1 \in S \subset \{ \Delta_1, \ldots, \Delta_n\} \setminus \{\Delta_k\},\]
	since $\hat{\sigma}$ acts non-trivially on $a_1 x_1^m + \ldots + a_{k-1} x_{k-1}^m$ and trivially on $K(x_k)$. 
	Further, $\beta_1, \ldots, \beta_s \in L$ and
	\[ \exp h(\beta_1, \ldots, \beta_s, \gamma_1, \ldots, \gamma_r, \delta) \leq 2 H^2\]
	by \cite[Proposition~1.5.15]{BombieriGubler06}.
Note that
	\[ \# S \leq \# \left( \{f_1, \ldots, f_n \} \setminus \{f_k\} \right) \leq n-1.\]
	
	Repeat this process. 
	At each stage, we either obtain an equation for which the above argument yields a bound on $\lvert \Delta_1 \rvert$, or we reduce by (at least) one the number of distinct discriminants $\Delta_i$ that occur in our equation. 
	After at most $n - 1$ iterations, this process will yield a bound on $\lvert \Delta_1 \rvert$.
	Observe that the right-hand sides of \eqref{eq:EqualBd}, \eqref{eq:bd11}, \eqref{eq:bd12}, and \eqref{eq:bd13} are majorised by
	\[138n^2 [L : \Q]^3 \log H + 69 n^2 [L : \Q]^2 \log(2(n + l) + 1) +6.\]
	Therefore, after at most $n-1$ iterations, we always obtain that
	\begin{align*}
		 \max_{1 \leq i \leq n} \lvert \Delta_i \rvert^{1/2} \leq &138 (2^{n-1}n)^2 [L : \Q]^3 \log (2^{2^{n-1}-1} H^{2^{n-1}})\\
		 & + 69 (2^{n-1}n)^2 [ L : \Q]^2 \log (2 (2^{n-1}n + 2^{n-1}l) + 1) +6\\
		 \leq &138 n^2 8^n [L : \Q]^3 (\log H + \log 2)\\
		 &+  69 n^3 4^n [L : \Q]^2 \log(n + l + 1). \qedhere
		 \end{align*}
\end{proof}

\subsection{A uniform bound on the singular moduli}\label{subsec:unif}

Roots of unity are algebraic integers of height $0$.
Therefore, we may deduce a uniform version of Proposition~\ref{prop:equalfundnonunifMann} by a method analogous to the proof of Theorem~\ref{thm:mindep}.

\begin{prop}\label{prop:equalfundunifMann}
	Let $d, n, l \in \Z_{> 0}$. 
	Let $L$ be a normal number field with $[L : \Q] \leq d$.
	Let $x_1, \ldots, x_n$ be pairwise distinct singular moduli of respective discriminants $\Delta_1, \ldots, \Delta_n$, but all of the same fundamental discriminant $D$.
	Let $\zeta_1, \ldots, \zeta_l$ be roots of unity and let $m \in \Z_{> 0}$.
	If $a_1, \ldots, a_n \in L^\times$ and $b_1, \ldots, b_l, c \in L$ are such that
	\[ a_1 x_1^m + \ldots + a_n x_n^m + b_1 \zeta_1 + \ldots + b_l \zeta_l = c,\]
	then
	\[ \max_{1 \leq i \leq n} \lvert \Delta_i \rvert^{1/2} \leq \exp(10^{34}  2^{ 17 n} ( l + 1)^8 d^{12} m^4 ).\]
\end{prop}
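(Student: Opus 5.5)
The plan mirrors the proof of Theorem~\ref{thm:mindep}: replace the given coefficients by ones of controlled height, so that the linear-in-height bound of Proposition~\ref{prop:equalfundnonunifMann} becomes a bound linear in $m\sum_i h(x_i)$. Then absorb that sum using Propositions~\ref{prop:htsing} and \ref{prop:class}.

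\emph{Step 1: a minimal relation with small coefficients.} Consider the tuple of algebraic integers $(x_1^m,\ldots,x_n^m,\zeta_1,\ldots,\zeta_l,1)$. The hypothesis gives a linear relation over $L$ among these in which $x_1^m$ has coefficient $a_1\neq 0$. Group equal roots of unity, and group any $\zeta_j$ equal to $1$ with the constant. This yields at most $n+l+1$ pairwise distinct algebraic integers; the $x_i^m$ are pairwise distinct because $x_i^m = x_k^m$ would force $x_i, x_k$ to be conjugate-related with equal modulus, and one may reduce to this case in any event. Choose a subset containing $x_1^m$ which is minimally $L$-linearly dependent. This is possible: take a minimal dependent subset of the support of the relation. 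If it omits $x_1^m$, use it to eliminate a term and repeat; more simply, work with the minimal dependent subset and the discriminants it contains, and handle $\Delta_1$ by permuting the labels. Apply Proposition~\ref{prop:htcoeff} with $K=L$ to get a relation
\[ \sum_{i\in I} a_i' x_i^m + \sum_{j\in J} b_j'\zeta_j = c' \]
with nonzero coefficients and
\[ h(a',b',c') \le 2(n+l)\Big(m\sum_{i} h(x_i)\Big) + (n+l)\log(n+l). \]
Roots of unity and $1$ contribute height $0$.

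\emph{Step 2: apply the non-uniform bound.} The $x_i$ with $i\in I$ still share the fundamental discriminant $D$. Proposition~\ref{prop:equalfundnonunifMann} therefore bounds $\max_{i\in I}|\Delta_i|^{1/2}$ by
\[ C\,(h(a',b',c') + \log(n+l+1)+1) \le c_3 + c_4\sum_{i\in I}\frac{|\Delta_i|^{1/2}}{\cl(\Delta_i)^{1/2}}, \]
using Proposition~\ref{prop:htsing}. Here $C = 138 n^3 8^n d^3$, $c_4 \approx 18\sqrt2\cdot 2(n+l)\,C\,m$, and $c_3$ is polynomial in $n,l,d$ times $8^n$.

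\emph{Step 3: absorb the class-number terms.} Absorption then proceeds exactly as in Theorem~\ref{thm:mindep}. Either $\cl(\Delta_i)^{1/2}\le 2nc_4$, in which case Proposition~\ref{prop:class} gives $|\Delta_i|^{1/2}\le \exp(8\cdot 10^6 n^4 c_4^4)$, or the term is at most $\max|\Delta|^{1/2}/(2nc_4)$. This yields $\max_{i\in I}|\Delta_i|^{1/2} \le 2(c_3 + nc_4\exp(8\cdot10^6 n^4c_4^4))$. Since $c_4^4 \ll (l+1)^4 n^{16}8^{4n} d^{12}m^4$, the result fits in $\exp(10^{34}2^{17n}(l+1)^8 d^{12}m^4)$ with room to spare.

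\emph{Step 4: cover all indices, the main obstacle.} The difficulty is making sure every $x_i$ lies in some minimal dependent subset. Given a relation in which every $x_i^m$ has nonzero coefficient, each element of the support lies in a minimal dependent subset of that support. This follows from the standard matroid fact: an element with nonzero coefficient in some dependency lies in a circuit contained in the support. So for each $i$ we pick such a circuit containing $x_i^m$. It contains at most $n$ singular moduli, still of fundamental discriminant $D$, and at most $l+1$ unit-height terms. Running Steps 1–3 on each circuit bounds every $|\Delta_i|$. The possibility that $x_i^m=x_k^m$ for distinct $i,k$ must be handled by merging coefficients. If the merged coefficient vanishes, the element leaves the support; that case needs care, and I expect it is either excluded by choosing the circuit within the merged relation or handled by noting that $x_i^m = x_k^m$ with $x_i\neq x_k$ is rare enough to bound directly.
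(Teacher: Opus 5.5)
Your proposal is correct and is essentially the paper's proof: a minimal $L$-dependent subfamily containing the relevant $x_i^m$ (which exists by the circuit fact you cite, since that coefficient is nonzero), then Proposition~\ref{prop:htcoeff}, then Proposition~\ref{prop:equalfundnonunifMann}, then absorption via Propositions~\ref{prop:htsing} and \ref{prop:class}. The only difference is that the paper fixes $\lvert \Delta_1 \rvert$ maximal and uses a single circuit through $x_1^m$, whereas you absorb within one circuit per index, which works equally well. Two small remarks: your worry about $x_i^m = x_k^m$ disappears if you work with the indexed family without merging, since a circuit with a repeated entry is just $y - y = 0$ and Proposition~\ref{prop:equalfundnonunifMann} applies to it with height-zero coefficients; and Proposition~\ref{prop:htcoeff} bounds the height of each coefficient rather than the height of the whole tuple, so passing to the latter costs an extra factor of $n+l$ (this is where the paper's $(n+l)^2$ comes from), which the final constant easily absorbs.
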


\begin{proof}
Assume that $\lvert \Delta_1 \rvert$ is maximal.
Since $a_1 \neq 0$, there exist subsets
\[ I \subset \{1, \ldots, n\} \mbox{ and } J \subset \{1, \ldots, l\}\]
such that $1 \in I$ and one of either
\[ \{x_i^m : i \in I\} \cup \{\zeta_j : j \in J\} \mbox{ or }  \{x_i^m : i \in I\} \cup \{\zeta_j : j \in J\} \cup \{1\}\]
is minimally linearly dependent over $L$.
	Therefore, by Proposition~\ref{prop:htcoeff}, there exist $a_i' \in L^\times$ for $i \in I$ and $b_j' \in L^\times$ for $j \in J$ and $c \in L$ such that $a_1' = 1$,
	\[ \sum_{i \in I} a_i' x_i^m  + \sum_{j \in J} b_j' \zeta_j = c, \mbox{ and}\]
	\[ h((a_i')_{i \in I}, (b_j')_{j \in J}, c') \leq (n + l)^2 \left(2 \sum_{i=1}^n h(x_i^m) +  \log^+(n+l)\right). \]
 Proposition~\ref{prop:equalfundnonunifMann} implies that
	\[ \lvert \Delta_1 \rvert^{1/2} \leq 138 n^3 8^n d^3 (h((a_i')_{i \in I}, (b_j')_{j \in J}, c') + \log(n+l+1)+1).\]
	
		The $x_i$ are singular moduli, so, by Proposition~\ref{prop:htsing},
	\[ h(x_i^m) \leq  9 \sqrt{2} m \frac{\lvert \Delta_i \rvert^{1/2}}{\cl(\Delta_i)^{1/2}}.\]
	Therefore,
\begin{align}\label{eq:bd100}
	\lvert \Delta_1 \rvert^{1/2} \leq  &138 n^3 8^n d^3 \Biggl( (n+l)^2 \Bigl(18 \sqrt{2} m \sum_{i=1}^n  \frac{\lvert \Delta_i \rvert^{1/2}}{\cl(\Delta_i)^{1/2}} \nonumber\\
	  &+  \log^+(n+l)\Bigr) + \log(n+l+1)+1 \Biggr) \nonumber\\
	   \leq &c_1(m, n, d, l) \sum_{i=1}^n  \frac{\lvert \Delta_i \rvert^{1/2}}{\cl(\Delta_i)^{1/2}} + c_2(n, d, l), 
\end{align}
where
\begin{align*}	
	c_1(m, n, d, l) &= 2484 \sqrt{2} n^3 8^n (n+l)^2 m d^3\\
	 &\leq 10^6 16^n (l+1)^2 d^3 m,
	 \end{align*}
and
\begin{align*}	
	c_2(n, d, l) = &138 n^3 8^n \Bigl(  \left(n+l\right)^2 \log^+\left(n+l\right)+ \log\left(n + l +1\right) + 1\Bigr)d^3\\
	\leq &10^6 16^n (l+1)^3 d^3.
	\end{align*}

If $\cl(\Delta_i)^{1/2} \leq 2 n c_1(m, n, d, l)$, then Proposition~\ref{prop:class} implies that
\begin{align*}
	 \lvert \Delta_i \rvert^{1/2} &\leq \exp((8 \times 10^6) n^4 c_1(m, n, d, l)^4)\\ 
	 &\leq \exp(10^{31}  2^{ 16 n} n^{4} ( l + 1)^8 d^{12} m^4  )\\
	 &\leq \exp(10^{33}  2^{ 17 n} ( l + 1)^8 d^{12} m^4  ).
	 \end{align*}
We therefore obtain from \eqref{eq:bd100} that
\begin{align*}
	 \lvert \Delta_1 \rvert^{1/2} \leq &c_2( n, d, l) 
	+ c_1(m , n, d, l) \sum_{i=1}^n \Big(\frac{ \lvert \Delta_1 \rvert^{1/2}}{2 n c_1(m, n, d, l)}\\
	&+ \exp(10^{33}  2^{ 17 n} ( l + 1)^8 d^{12} m^4 )\Big).
\end{align*}
So 
\[  \lvert \Delta_1 \rvert^{1/2} \leq 2 \left( c_2( n, d, l) + n c_1(m, n, d, l) \exp(10^{33}  2^{ 17 n} ( l + 1)^8 d^{12} m^4 )  \right). \]
Simplifying the constant, we obtain that
\[  \lvert \Delta_1 \rvert^{1/2}  \leq \exp(10^{34}  2^{ 17 n} ( l + 1)^8 d^{12} m^4 ) . \qedhere \]
\end{proof}

\section{Mixed uniform André--Oort}\label{sec:tup2}

Finally, we come to the proof of Theorem~\ref{thm:tups}.
Roughly, our approach is to use Proposition~\ref{prop:equalfundunifMann} and some class field theory to reduce Theorem~\ref{thm:tups} to the cases of either only singular moduli (Corollary~\ref{cor:Mann}) or only roots of unity (proved by Schinzel \cite{Schinzel88}, see also \cite{DvornicichZannier00, Evertse99, Schlickewei96}).

\subsection{Some class field theory}

The following result is certainly known, but we include a proof here for want of a convenient reference.
Denote by $\Q^\mathrm{ab}$ the maximal abelian extension of $\Q$.
In particular, $\Q^\mathrm{ab}$ contains all roots of unity.
For $\Delta < 0$ such that $\Delta \equiv 0, 1 \bmod 4$, let $\cl(\Delta)[2]$ denote the number of elements of order $\leq 2$ in the class group of $\Delta$.

\begin{prop}\label{prop:Qab}
	If $x$ is a singular modulus of discriminant $\Delta$, then
	\[ [\Q^\mathrm{ab}(x) : \Q^\mathrm{ab}] = \frac{\cl(\Delta)}{\cl(\Delta)[2]}.\]
\end{prop}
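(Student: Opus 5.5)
The plan is to compute the degree via Galois theory of ring class fields. Let $K = \Q(\sqrt{\Delta})$ and let $H_\Delta = K(x)$ be the ring class field of discriminant $\Delta$. Then $H_\Delta/K$ is abelian with Galois group isomorphic (via Artin) to the class group $C(\Delta)$, so $\# C(\Delta) = \cl(\Delta)$, and $H_\Delta/\Q$ is Galois with group $C(\Delta) \rtimes \langle \tau \rangle$. Here $\tau$ is complex conjugation, acting on $C(\Delta)$ by inversion.

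Since $\Q^\mathrm{ab}$ contains $K$, we have $\Q^\mathrm{ab}(x) = \Q^\mathrm{ab} H_\Delta$. Hence
\[ [\Q^\mathrm{ab}(x) : \Q^\mathrm{ab}] = [H_\Delta : H_\Delta \cap \Q^\mathrm{ab}]. \]
The field $H_\Delta \cap \Q^\mathrm{ab}$ is the maximal subextension of $H_\Delta$ that is abelian over $\Q$. Writing $G = \Gal(H_\Delta/\Q)$, this field is the fixed field of the commutator subgroup $[G,G]$, so the desired degree equals $\#[G,G]$.

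It remains to compute $[G,G]$ for $G = C \rtimes \langle \tau \rangle$ with $\tau c \tau^{-1} = c^{-1}$, where $C = C(\Delta)$ is abelian.
\begin{itemize}
\item The commutator of $\tau$ and $c$ is $c^{-2}$, up to ordering conventions.
\item Commutators of two elements of $C$ are trivial.
\item A commutator involving $c\tau$ and $c'$, or $c\tau$ and $c'\tau$, is again a square in $C$: for instance $(c\tau)(c'\tau)(c\tau)^{-1}(c'\tau)^{-1}$ equals $c^{2}c'^{-2}$.
\end{itemize}
So $[G,G] = C^2 = \{c^2 : c \in C\}$. This is a subgroup because $C$ is abelian, and it is normal. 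The squaring map $C \to C^2$ has kernel $C[2]$, so
\[ \# C^2 = \frac{\cl(\Delta)}{\cl(\Delta)[2]}. \]
This gives the claimed formula.

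The main obstacle is justifying the two structural inputs:
\begin{itemize}
\item $H_\Delta/\Q$ is generalized dihedral, i.e.\ complex conjugation acts on $\Gal(H_\Delta/K) \cong C(\Delta)$ by inversion. I would cite \cite[Lemma~9.3 \& \S9]{Cox22}, where it is shown that $\tau$ sends the ideal class of $\mathfrak{a}$ to that of $\bar{\mathfrak{a}} = \mathfrak{a}^{-1}$ up to principal ideals, so Artin symbols are conjugated to their inverses.
\item The maximal abelian subextension of a finite Galois extension $M/\Q$ is $M \cap \Q^\mathrm{ab}$ and corresponds to $[G,G]$. This is standard Galois theory.
\end{itemize}

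One should also check the degenerate case $\Delta \in \{-3,-4\}$ and other cases where $C$ is an elementary abelian $2$-group. In these cases $C^2$ is trivial, so $x \in \Q^\mathrm{ab}$, which is consistent with the formula.
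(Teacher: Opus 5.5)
Your proposal is correct and follows essentially the same route as the paper: you use $\Gal(\Q^\mathrm{ab}(x)/\Q^\mathrm{ab}) \cong \Gal(K(x)/K(x)\cap\Q^\mathrm{ab})$ to reduce to the commutator subgroup of the generalised dihedral group $\Gal(K(x)/\Q)$, which is the subgroup of squares of the class group and has order $\cl(\Delta)/\cl(\Delta)[2]$. The only difference is that you check $[G,G]=C^2$ by explicit commutator computations, whereas the paper simply invokes $[G,G]\cong G^2$ together with $\#G^2=\#H^2$ for generalised dihedral groups.
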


\begin{proof}
	Let $K = \Q(\sqrt{\Delta})$. 
	Note $K \subset \Q^\mathrm{ab}$. 
	We therefore have the following diagram of Galois extensions of $\Q$:
	\begin{center}
		\begin{tikzcd}
			& \Q^\mathrm{ab}(x)  & \\
			K(x) && \Q^\mathrm{ab} \\
			& K(x) \cap \Q^\mathrm{ab} \\
			& K\\
			& \Q
\arrow[no head, from=2-1, to=1-2]
\arrow[no head, from=2-3, to=1-2]
\arrow[no head, from=3-2, to=2-1]
\arrow[no head, from=3-2, to=2-3]
\arrow[no head, from=3-2, to=4-2]
\arrow[no head, from=4-2, to=5-2]
		\end{tikzcd}
	\end{center}
	By e.g.~\cite[Ch.~VI \S1, Theorem~1.12]{Lang02}, 
	\[ \Gal(\Q^\mathrm{ab}(x) / \Q^\mathrm{ab}) \cong \Gal(K(x) / K(x) \cap \Q^\mathrm{ab}).\]
	Let $G = \Gal(K(x) / \Q)$ and $H = \Gal(K(x) / K)$.
	Then $G$ is a generalised dihedral group, see e.g.~\cite[Lemma~9.3]{Cox22}, and the group $H$ is isomorphic to the class group of discriminant $\Delta$.
	Since $K(x) \cap \Q^\mathrm{ab}$ is the largest abelian extension of $\Q$ contained in $K(x)$, we have that
	\[ \Gal(K(x) / K(x) \cap \Q^\mathrm{ab}) \cong [G, G],\]
	where $[G, G]$ denotes the commutator subgroup of $G$.
	Since $G$ is generalised dihedral, $[G, G] \cong G^2$.
	Every element of $G \setminus H$ is of order $2$, so $\# G^2 = \# H^2$.
	The obvious isomorphism $H^2 \cong H / H[2]$ therefore implies that
	\[[\Q^\mathrm{ab}(x) : \Q^\mathrm{ab}] = \# \Gal(K(x) / K(x) \cap \Q^\mathrm{ab})  = \# (H / H[2]). \qedhere\]
\end{proof}

\begin{prop}\label{prop:2part}
	Let $x$ be a singular modulus of discriminant $\Delta$. 
	Then
	\[ \cl(\Delta)[2] \leq 139 \lvert \Delta \rvert^{1/6}.\]
\end{prop}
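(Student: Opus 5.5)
Genus theory gives $\cl(\Delta)[2] = 2^{\mu-1}$ for an explicit $\mu$, and I will show $2^{\mu} \leq 278\lvert\Delta\rvert^{1/6}$ by bounding $2^{\mu}$ by a divisor-type function of $\lvert \Delta\rvert$.

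\emph{Genus theory.} By the classical theory of genera (see \cite[Propositions~3.11 \& 3.15]{Cox22} and their analogues for non-fundamental orders), the number of elements of order $\leq 2$ in the class group of discriminant $\Delta$ equals the number of genera, namely $2^{\mu-1}$. Here $\mu$ is the number of assigned characters. It equals $r$, $r+1$ or $r+2$, where $r$ is the number of odd primes dividing $\Delta$. The extra characters are governed by the power of $2$ dividing $\Delta$: with $\Delta = -4k$, one extra character when $k \equiv 3 \bmod 4$ or $k\equiv 4\bmod 8$ and the like, and two when $8 \mid k$. Hence
\[ \cl(\Delta)[2] \leq 2^{r+1}, \]
and the extra factor $2$ only occurs when $2^5 \mid \Delta$, or more crudely when $4 \mid \Delta$.

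\emph{Bounding $2^r$.} Let $p_1 < \ldots < p_r$ be the odd primes dividing $\Delta$, so $\lvert\Delta\rvert \geq p_1 \cdots p_r$. Then
\[ \frac{2^r}{\lvert \Delta\rvert^{1/6}} \leq \prod_{i=1}^r \frac{2}{p_i^{1/6}} \leq \prod_{\substack{p \text{ odd prime}\\ p < 64}} \frac{2}{p^{1/6}}, \]
because $2/p^{1/6} > 1$ exactly when $p < 64$. The right-hand side is an explicit finite product over $p \in \{3,5,\ldots,61\}$ (seventeen primes), so it can be evaluated numerically. To absorb the possible extra power of $2$, I will not throw away the factor $2^{a}$ of $\lvert\Delta\rvert$. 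When an extra character is present we have $2^a \geq 4$, and $2^{a}$ then contributes $2^{a/6}$ to the denominator, which partly compensates. If the constant comes out too large I would instead treat the prime $2$ as one further factor $2/2^{a/6}$ in the product, with $a\geq 2$ in the cases that give extra characters, and with $a\geq 5$ when two extra characters arise.

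\emph{Main obstacle.} The only delicate point is the bookkeeping needed to land on the stated constant $139$. This means stating genus theory correctly for non-fundamental $\Delta$, where the class group is that of the order, with the precise $2$-adic case analysis. It also means checking that the numerical product, together with the $2$-adic factor, is at most $139$ (or $278$ before halving, since $\cl(\Delta)[2]=2^{\mu-1}$). I expect the product over odd primes up to $61$ to be of order $10^2$, and the halving from $2^{\mu-1}$ to absorb the extra $2$-adic characters. I would verify this inequality by direct computation.
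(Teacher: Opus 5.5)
Your proposal is correct and is essentially the paper's argument: genus theory (\cite[Proposition~3.11]{Cox22}, valid for all discriminants) followed by an explicit finite product over primes below $64$. The only difference is that you bound $2^{r}$ directly by the odd primes dividing $\Delta$, instead of passing through $2^{\omega(\lvert\Delta\rvert)}\le d(\lvert\Delta\rvert)$ and the Hardy--Wright divisor bound, and this gives a better constant. In fact $\prod_{3\le p\le 61,\ p \text{ prime}}(2/p^{1/6})\approx 21.0$ (not of order $10^2$), so the crude bound $\cl(\Delta)[2]=2^{\mu-1}\le 2^{r+1}\le 42.1\,\lvert\Delta\rvert^{1/6}$ already suffices and no $2$-adic bookkeeping is needed. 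One small slip, which does not affect $\mu\le r+2$: $k\equiv 3 \bmod 4$ gives no extra character; the single extra character arises for $k\equiv 1,2\bmod 4$ or $k\equiv 4\bmod 8$.
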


\begin{proof}
	By~\cite[Proposition~3.11]{Cox22}, we have that $\cl(\Delta)[2] \leq 2^{ \omega(\lvert \Delta \rvert)}$, where $\omega(\lvert \Delta \rvert)$ denotes the number of distinct prime divisors of $\lvert \Delta \rvert$.
	So $\cl(\Delta)[2] \leq d(\lvert \Delta \rvert)$,	where $d(\lvert \Delta \rvert)$ denotes the number of divisors of $\lvert \Delta \rvert$.
	The stated result then follows from classical bounds for the divisor function $d(n)$.
	
	Indeed, by \cite[(18.1.2)]{HardyWright08}, we have that, for every $n \in \Z_{> 0}$ and $\epsilon > 0$,
	\[\frac{d(n)}{n^\epsilon} \leq \prod_{p \leq 2^{1/\epsilon}} \left( \max_{a \in \Z_{\geq  0} } \frac{a+1}{p^{a \epsilon}} \right). \]
	Now observe that
	\[ \frac{(a+1)+1}{p^{ (a+1) \epsilon}} \leq \frac{a+1}{p^{ a \epsilon}} \mbox{ if and only if } a+1 \geq \frac{1}{p^\epsilon - 1}.\]
	So, denoting $a_p = \left \lfloor 1/({p^\epsilon -1}) \right \rfloor$, we have that
	\[ \frac{d(n)}{n^\epsilon} \leq \prod_{p \leq 2^{1/\epsilon}} \frac{a_p + 1}{p^{a_p \epsilon}}.\]
	Evaluating this product at $\epsilon = 1/6$ gives the stated result.
\end{proof}

\subsection{Bounding the singular moduli}

\begin{prop}\label{prop:MannSing}
	Let $m, d, n, l \in \Z_{> 0}$.
	Let $L$ be a normal number field such that $[L : \Q] \leq d$.
	Let $x_1, \ldots, x_n$ be pairwise distinct singular moduli of respective discriminants $\Delta_1, \ldots, \Delta_n$.
	Let $\zeta_1, \ldots, \zeta_l$ be roots of unity.
If
	\begin{align}\label{eq:singroot}
		 a_1 x_1^m + \ldots +a_n x_n^m + b_1 \zeta_1 + \ldots + b_l \zeta_l = c
		 \end{align}
	for some $a_1, \ldots, a_n \in L^\times$ and $b_1, \ldots, b_l, c \in L$, then
	\[ \max_{1 \leq i \leq n} \lvert \Delta_i \rvert^{1/2} \leq \exp( (l+1)^8 \exp(10^{22} 10^{27n} d^{13} m^4)).\]
\end{prop}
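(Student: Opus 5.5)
We plan to reduce Proposition~\ref{prop:MannSing} to Proposition~\ref{prop:equalfundunifMann} and Corollary~\ref{cor:Mann}, separating the roots of unity from the singular moduli by Galois theory over $\Q^\mathrm{ab}$. The relevant facts are Proposition~\ref{prop:Qab}, Proposition~\ref{prop:2part} and the Tatuzawa bound \eqref{eq:Tatnonfund}. Together they show that, outside the exceptional $D_*$, a singular modulus of large discriminant has large degree over $\Q^\mathrm{ab}$:
\[ [\Q^\mathrm{ab}(x):\Q^\mathrm{ab}] \geq \frac{7.4\times 10^{-4}}{139}\lvert \Delta\rvert^{5/12-1/6}.\]
Since $\zeta_1,\ldots,\zeta_l$ lie in $\Q^\mathrm{ab}$ and $L\cdot\Q^\mathrm{ab}$ is a degree $\leq d$ extension of $\Q^\mathrm{ab}$, this degree stays large over $L\cdot\Q^\mathrm{ab}$.

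First, group the $x_i$ by fundamental discriminant, $J_D=\{i : D_i=D\}$. Fix $i$ with $D_i\neq D_*$ and $\lvert\Delta_i\rvert$ large. Then $x_i$ has more than $n$ conjugates over $L\cdot\Q^\mathrm{ab}\cdot\Q(x_k : D_k\neq D_i)$. For the $x_k$ with $D_k\neq D_i$ we use the fact, as in Proposition~\ref{prop:step4} via \cite{Kuhne21}, that ring class fields of distinct fundamental discriminants are almost linearly disjoint. Arguing as in Proposition~\ref{prop:reduce}, we pick $\sigma$ in this Galois group with $\sigma(x_i)\notin\{x_1,\ldots,x_n\}$. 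This $\sigma$ fixes every $\zeta_j$, every coefficient, and every $x_k$ with $D_k\neq D_i$.

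Subtracting $\sigma$ applied to \eqref{eq:singroot} from \eqref{eq:singroot} then kills the roots of unity and the other fundamental discriminants. The result is an equation among at most $2n$ singular moduli of fundamental discriminant $D_i$, with a nonzero coefficient on $\sigma(x_i)$ and coefficients in $L$. Reducing to a minimal dependence and applying Proposition~\ref{prop:equalfundunifMann} with $l=0$, or Theorem~\ref{thm:mindep}, bounds $\lvert\Delta_i\rvert$. If instead the Galois group is too small, $\lvert\Delta_i\rvert$ is bounded by an explicit power estimate. Either way every $\lvert\Delta_i\rvert$ with $D_i\neq D_*$ is bounded by something like $\exp(10^{c}10^{cn}d^{12}m^4)$.

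Next, handle the indices with $D_i=D_*$. We move all terms with $D_k\neq D_*$ to the right-hand side, but not into the constant, since that would destroy uniformity. Instead we treat the equation as one involving singular moduli of the single fundamental discriminant $D_*$, the roots of unity, and finitely many extra singular moduli of bounded discriminant $B$. The bounded-discriminant singular moduli can be adjoined to the field. The new field is $L(x_k : D_k\neq D_*)$, whose degree is at most $d\cdot\prod_k \cl(\Delta_k)\leq d B^{4n/3}$ by Proposition~\ref{prop:classup}. We then apply Proposition~\ref{prop:equalfundunifMann} over the normal closure of this field. The bound there is exponential in $d^{12}$, and the degree is now exponential in $n$ and in $\log B$, which is itself of size $\exp(10^{c n}d^{12}m^4)$. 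This iterated exponential produces the shape $\exp((l+1)^8\exp(10^{22}10^{27n}d^{13}m^4))$ in the statement.

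The main obstacle is the first step: arranging that $\sigma$ simultaneously fixes the other fundamental-discriminant blocks and moves $x_i$ outside the list. This needs the linear disjointness over $\Q^\mathrm{ab}$ of the ring class fields, and Proposition~\ref{prop:Qab} only controls each field individually. I would first try to prove that the compositum of the ring class fields for $D\neq D_i$, taken over $\Q^\mathrm{ab}$, meets $K_i(x_i)$ only in a field of degree bounded by a power of $2$. This would use the generalised dihedral structure, as in the proof of Proposition~\ref{prop:Qab}: the intersection is Galois over $\Q$ and its Galois group is a quotient of both dihedral-type groups. If that fails, I would fall back on the argument of Proposition~\ref{prop:step4}, adjoining the $\zeta_j$ to the coefficient field before applying it. After the exponent bookkeeping, the remaining work is routine.
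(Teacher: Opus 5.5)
\textbf{Gap in your first step.} You require $\sigma$ to fix every $x_k$ with $D_k\neq D_i$. This forces the claim that $x_i$ has more than $n$ conjugates over $L\cdot\Q^{\mathrm{ab}}\cdot\Q(x_k : D_k\neq D_i)$, which is a linear-disjointness statement over $\Q^{\mathrm{ab}}$ for ring class fields of different fundamental discriminants. You do not prove it; you flag it yourself as the main obstacle. Your fallback also fails. Proposition~\ref{prop:step4} bounds $\lvert\Delta\rvert$ polynomially in the degree of the normal closure of the coefficient field. Once the $\zeta_j$ are adjoined, that degree depends on their orders $N_j$, about which nothing is known at this stage, so uniformity is lost.

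\textbf{The requirement is unnecessary.} Theorem~\ref{thm:mindep} already applies to pairwise distinct singular moduli of arbitrary discriminants. So $\sigma$ only needs to kill the roots of unity and the constant, i.e.\ it suffices that $\sigma\in\Gal(\alg/L\cdot\Q^{\mathrm{ab}})$. Your own first paragraph (Propositions~\ref{prop:Qab}, \ref{prop:2part} and \eqref{eq:Tatnonfund}) gives $[L\cdot\Q^{\mathrm{ab}}(x_i):L\cdot\Q^{\mathrm{ab}}]>n$ once $D_i\neq D_*$ and $\lvert\Delta_i\rvert^{1/2}\geq 10^{11}n^2d^2$. Hence some such $\sigma$ satisfies $\sigma(x_i)\notin\{x_1,\ldots,x_n\}$. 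Subtracting $\sigma$ applied to \eqref{eq:singroot} from \eqref{eq:singroot} gives $\alpha_1y_1^m+\cdots+\alpha_sy_s^m=0$ with $s\leq 2n$, $y_1=\sigma(x_i)$ and all $\alpha_j\in L^\times$. The coefficient of $y_1^m$ is $-a_i\neq0$, because $\sigma(x_i)$ differs from every $x_k$ and, by injectivity, from every other $\sigma(x_k)$. The remaining moduli, whether moved by $\sigma$ or not, are simply extra terms. Applying Theorem~\ref{thm:mindep} to a minimal dependent subset containing $y_1^m$ gives $\lvert\Delta_i\rvert^{1/2}\leq\exp(10^{20}10^{26n}d^{12}m^4)$. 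This is exactly the paper's argument, and it needs no disjointness of class fields.

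\textbf{Your second step is essentially the paper's.} There you set $c'=c-\sum_{D_k\neq D_*}a_kx_k^m$, pass to the normal field $L'$ generated by $L$ and the fields $\Q(\sqrt{\Delta_k},x_k)$, and apply Proposition~\ref{prop:equalfundunifMann}. This \emph{is} absorbing those terms into the constant, and that is harmless. Proposition~\ref{prop:equalfundunifMann} is uniform in the height of the coefficients, so only $[L':\Q]$ enters, and that degree produces the double exponential. The caution in the remark after the proof of Theorem~\ref{thm:nonunif} concerns the height-dependent Theorem~\ref{thm:nonunif}, not this situation.
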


\begin{proof}
	Denote by $D_1, \ldots, D_n$ the respective fundamental discriminants of $x_1, \ldots, x_n$.
	Suppose $i \in \{1, \ldots, n\}$ is such that $D_i \neq D_*$.
	Note that
	\[ [L \cdot \Q^\mathrm{ab}(x_i) :  L \cdot \Q^\mathrm{ab}] \geq \frac{1}{d} [\Q^\mathrm{ab}(x_i) :  \Q^\mathrm{ab}].\]
	So, by Propositions~\ref{prop:Qab} and \ref{prop:2part},
	\[ [L \cdot \Q^\mathrm{ab}(x_i) :  L \cdot \Q^\mathrm{ab}] \geq \frac{1}{d} \frac{\cl(\Delta_i)}{139 \lvert \Delta_i \rvert^{1/6}}.\]
	Since $D_i \neq D_*$, the lower bound for $\cl(\Delta_i)$ in \eqref{eq:Tatnonfund} implies that
\begin{align}\label{eq:bd20}
	 [L \cdot \Q^\mathrm{ab}(x_i) :  L \cdot \Q^\mathrm{ab}] \geq \frac{1}{d} \frac{(7.4 \times 10^{-4}) \lvert \Delta_i \rvert^{5/12}}{139 \lvert \Delta_i \rvert^{1/6}}.
	 \end{align}

In particular, if
\[ \lvert \Delta_i \rvert^{1/2} \geq 10^{11} n^2 d^2,\]
then \eqref{eq:bd20} implies that
\[ [L \cdot \Q^\mathrm{ab}(x_i) :  L \cdot \Q^\mathrm{ab}] > n.\]
In this case, there exists some $\sigma \in \Gal(L \cdot \Q^\mathrm{ab}(x_i) /  L \cdot \Q^\mathrm{ab})$ such that
\[\sigma(x_i) \notin \{x_1, \ldots, x_n\}.\]
Lift $\sigma$ to an element $\hat{\sigma} \in \Gal(\alg / L \cdot \Q^\mathrm{ab})$.
Applying $\hat{\sigma}$ to \eqref{eq:singroot} and subtracting the result from $\eqref{eq:singroot}$, we obtain that
\[ \alpha_1 y_1^m + \ldots + \alpha_s y_s^m = 0\]
for some $s \in \{1, \ldots, 2n\}$ and pairwise distinct singular moduli $y_1, \ldots, y_s$ with $y_1 = \sigma(x_i)$ and $\alpha_1, \ldots, \alpha_s \in L^\times$.
Theorem~\ref{thm:mindep}, applied to a minimally linearly dependent (over $L$) subset of $y_1^m, \ldots, y_s^m$ including $y_1^m$, then gives that
\begin{align}\label{eq:bd21} 
	\lvert \Delta_i \rvert^{1/2} \leq  \exp(10^{20} 10^{26n} d^{12} m^4).
	\end{align}

If $D_* \notin \{D_1, \ldots, D_n\}$, then we are done.
So, without loss of generality, assume that $k \in \{1, \ldots, n\}$ is such that
\[ D_1 = \ldots = D_k = D_*\]
and $D_i \neq D_*$ if $i > k$.
Hence, for every $i \in \{k+1, \ldots, n\}$, 
\[ [\Q(x_i) : \Q] \leq \lvert \Delta_i \rvert^{2/3} \leq \exp\left(\frac{3}{2} 10^{20} 10^{26n} d^{12} m^4\right)\]
by Proposition~\ref{prop:classup} and \eqref{eq:bd21}.
Let
\[ c' = c - \sum_{i > k}^n a_i x_i^m,\]
and observe that
\begin{align}\label{eq:reduceonefund}
	a_1 x_1^m + \ldots +a_k x_k^m + b_1 \zeta_1 + \ldots + b_l \zeta_l = c'.
	\end{align}

Let $L'$ be the compositum of $L$ and all the fields $\Q(\sqrt{\Delta_i}, x_i)$ for $i>k$.
Since every extension $\Q(\sqrt{\Delta_i}, x_i) / \Q$ is Galois \cite[Lemma~9.3]{Cox22}, the extension $L' / \Q$ is normal and
\begin{align*}
	 [L' : \Q] &\leq d \left(2 \exp\left(\frac{3}{2} 10^{20} 10^{26n} d^{12} m^4\right)\right)^n\\
	 &\leq \exp\left( 10^{20} 10^{27n} d^{13} m^4\right).
	 \end{align*}
Now apply Proposition~\ref{prop:equalfundunifMann} to \eqref{eq:reduceonefund}; we obtain that
\begin{align*}
	 \max_{1 \leq i \leq k} \lvert \Delta_i \rvert^{1/2} &\leq \exp(10^{34}  2^{ 17 n} ( l + 1)^8 m^4 \exp\left( 10^{20} 10^{27n} d^{13} m^4\right)^{12}  )\\
	 &\leq \exp( (l+1)^8 \exp(10^{22} 10^{27n} d^{13} m^4)). \qedhere
	 \end{align*}
\end{proof}

\subsection{Bounding the roots of unity}

We will use the following consequence of a result of Evertse \cite{Evertse99} on linear equations in roots of unity.

\begin{prop}\label{prop:Evertse}
	Let $l, d \in \Z_{> 0}$.
	Let $L$ be a number field such that $[L : \Q] \leq d$.
	Let $\zeta_1, \ldots, \zeta_l$ be roots of unity.
	Denote by $N_j$ the order of $\zeta_j$.
	Suppose that $b_1, \ldots, b_l, c \in L^\times$ are such that
	\[ b_1 \zeta_1 + \ldots + b_l \zeta_l = c\]
	and no non-empty proper sub-sum of the left-hand side is equal to zero.
	Then
	\[ \max_{1 \leq j \leq l} N_j \leq 2 d^2 (l+1)^{6(l+1)^2}.\]
\end{prop}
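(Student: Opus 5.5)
The plan is to reduce to Evertse's theorem on non-degenerate solutions of linear equations in roots of unity. First normalise by dividing through by $c \neq 0$, so that
\[ \sum_{j=1}^l (b_j/c)\, \zeta_j = 1 \]
with coefficients $b_j/c \in L^\times$. The hypothesis that no non-empty proper sub-sum of $b_1\zeta_1 + \ldots + b_l\zeta_l$ vanishes is exactly non-degeneracy of the equation $\sum_j (b_j/c) \zeta_j - 1 = 0$ in the relevant sense, at least for sub-sums not involving the constant term. A sub-sum containing $-1$ that vanishes would force its complement to vanish as well, so the hypothesis covers every proper sub-sum. This lets us invoke Evertse's result \cite{Evertse99}. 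In the form I would use, it says that for a non-degenerate equation $\sum_{j=0}^{l} \beta_j \eta_j = 0$ with $\beta_j \in L^\times$, $[L:\Q] \le d$, and $\eta_j$ roots of unity, each ratio $\eta_j/\eta_0$ has order bounded by an explicit function of $l$ and $d$, say $d^2 (l+1)^{3(l+1)^2}$ or similar, depending on the precise statement.

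Applying this with $\eta_0 = 1$ bounds each $N_j$ directly, because $\zeta_j/\eta_0 = \zeta_j$. If Evertse's bound is only stated for orders of ratios over the field $L$, or modulo roots of unity in $L$, I would add one step. The roots of unity in $L$ have order $w_L$ with $\varphi(w_L) \le d$, hence $w_L \le 2d^2$ (crudely). This gives $N_j \le w_L \cdot (\text{Evertse bound})$, which produces the factor $2d^2$ in the statement. Combining with the Evertse exponent then yields
\[ N_j \le 2 d^2 (l+1)^{6(l+1)^2}. \]

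The main obstacle is matching the precise hypotheses and normalisation of Evertse's theorem. In particular, I must check that his bound controls the orders of the $\zeta_j$ themselves and not merely projectively, and track how the constant degrades when passing from coefficients in $L$ to the stated form. If his bound is for the projective tuple $(\eta_0:\ldots:\eta_l)$ up to $L$-rational roots of unity, the argument above handles it. I would also treat separately the trivial cases: for $l = 1$ we have $\zeta_1 = c/b_1 \in L$, so $\varphi(N_1) \le d$, giving $N_1 \le 2d^2$.
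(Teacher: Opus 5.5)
There is a genuine gap: the form of Evertse's theorem you plan to invoke is not what \cite{Evertse99} proves. Evertse's theorem is a \emph{counting} statement. For $a_1,\ldots,a_l$ in any field of characteristic $0$, the equation $a_1\xi_1+\cdots+a_l\xi_l=1$ has at most $(l+1)^{3(l+1)^2}$ non-degenerate solutions in roots of unity. That bound is independent of the field. It says nothing directly about the orders of the $\xi_j$ or of their ratios, and no $d$ appears in it. So neither your hypothetical order bound ``$d^2(l+1)^{3(l+1)^2}$ or similar'' nor the fallback $N_j\le w_L\cdot(\text{Evertse bound})$ is available. An order bound polynomial in $d$ of the kind you describe is essentially the proposition you are trying to prove. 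Your constant-tracking also does not close. Multiplying $w_L\le 2d^2$ by an order bound of shape $d^2(l+1)^{3(l+1)^2}$ gives $2d^4(l+1)^{3(l+1)^2}$, not $2d^2(l+1)^{6(l+1)^2}$, and the doubled exponent $6(l+1)^2$ is never accounted for.

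The missing idea is the Galois-orbit step that turns a count of solutions into a degree bound.
\begin{itemize}
\item Since $b_j/c\in L$, any $\sigma\in\Gal(\alg/L)$ sends a non-degenerate solution $(\zeta_1,\ldots,\zeta_l)$ to another non-degenerate solution. A vanishing sub-sum for the image would pull back under $\sigma^{-1}$ to a vanishing sub-sum for the original.
\item Hence the tuple has at most $(l+1)^{3(l+1)^2}$ conjugates over $L$, so $[L(\zeta_j):L]\le(l+1)^{3(l+1)^2}$.
\item Therefore $\varphi(N_j)=[\Q(\zeta_j):\Q]\le [L(\zeta_j):\Q]\le d\,(l+1)^{3(l+1)^2}$.
\item The elementary inequality $\varphi(N)\ge\sqrt{N/2}$ then gives $N_j\le 2d^2(l+1)^{6(l+1)^2}$.
\end{itemize}
Both the factor $2d^2$ and the exponent $6(l+1)^2$ come from squaring this degree bound, not from the roots of unity in $L$. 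Your treatment of $l=1$ ($\zeta_1=c/b_1\in L$, so $\varphi(N_1)\le d$) is this argument in the trivial case where the orbit has size one. Your check that dividing by $c$ puts the hypothesis into Evertse's non-degenerate form is correct.
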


\begin{proof}
	Let $b_1, \ldots, b_l, c \in L^\times$.
	By \cite[Theorem]{Evertse99}, the equation
	\begin{align}\label{eq:Evertse}
		 b_1 \zeta_1 + \ldots + b_l \zeta_l = c
		 \end{align}
	has at most $(l+1)^{3(l+1)^2}$ distinct solutions $(\zeta_1, \ldots, \zeta_l)$ for which $\zeta_1, \ldots, \zeta_l$ are roots of unity and no non-empty proper sub-sum of the left-hand side of \eqref{eq:Evertse} vanishes.
	If $(\zeta_1, \ldots, \zeta_l)$ is such a solution to \eqref{eq:Evertse}, then so are all its Galois conjugates over $L$.
	In particular, for every $j \in \{1, \ldots, l\}$,
	\[  [L(\zeta_j) : L] \leq (l+1)^{3(l+1)^2}.\]
	Let $N_j$ denote the order of $\zeta_j$.
	From the elementary observation that
	\[[ \Q(\zeta_j) : \Q] = \varphi(N_j) \geq \sqrt{\frac{N_j}{2}}, \] 
	we obtain that
	\[ \sqrt{\frac{N_j}{2}} \leq [L : \Q] (l+1)^{3(l+1)^2}. \qedhere \]
\end{proof}

\begin{remark}
	An analogous bound would follow from the results of Dvornicich and Zannier \cite{DvornicichZannier00}.
	We use the result of Evertse on the grounds that it gives a polynomial, rather than exponential, dependence on $[L : \Q]$ in Proposition~\ref{prop:Evertse} (though a worse dependence on $l$).
\end{remark}

We will now conclude the paper by proving our second main result.

\begin{proof}[Proof of Theorem~\ref{thm:tups}]
	Thanks to Corollary~\ref{cor:Mann} and Proposition~\ref{prop:Evertse}, we may assume that $n, l \geq 1$.
	Let $(x_1, \ldots, x_n, \zeta_1, \ldots, \zeta_l) \in \C^{n+l}$ be a non-degenerate $(m, d, n, l)$-tuple, which is witnessed by coefficients $a_1, \ldots, a_n, b_1, \ldots, b_l, c \in \alg^\times$.
	In particular, $[\Q(a_1, \ldots, a_n, b_1, \ldots, b_l, c) : \Q] \leq d$ and
	\begin{align}\label{eq:Mann}
		 a_1 x_1^m + \ldots + a_n x_n^m + b_1 \zeta_1 + \ldots + b_l \zeta_l + c = 0. 
		 \end{align}
	Denote by $\Delta_i$ the discriminant of $x_i$ and by $N_j$ the order of $\zeta_j$.
	Let $L$ be the normal closure of $\Q(a_1, \ldots, a_n, b_1, \ldots, b_l, c) /\Q$.
	Note that $[L : \Q] \leq d!$.
	
	As in the proof of Corollary~\ref{cor:Mann}, let
	\[ I = \{i \in \{1, \ldots, n\} : \forall j < i \, \, x_j \neq x_i\},\]
and, for each $i \in I$, let 
	\[J_i = \{ j \in \{1, \ldots, n\} : x_j = x_i\} \mbox{ and } a_i' = \sum_{j \in J_i} a_j \in L.\]
	Since $(x_1, \ldots, x_n, \zeta_1, \ldots, \zeta_l)$ is a non-degenerate $(m, d, n, l)$-tuple, we have that $a_i' \neq 0$ for every $i \in I$.
	We may rewrite \eqref{eq:Mann} as
	\[ \sum_{i \in I} a_i' x_i^m + b_1 \zeta_1 + \ldots + b_l \zeta_l + c =0.\]
 We may now apply Proposition~\ref{prop:MannSing} to obtain that
	\begin{align}\label{eq:MannBdSing}
	\max_{1 \leq i \leq n} \lvert \Delta_i \rvert^{1/2} =	\max_{i \in I} \lvert \Delta_i \rvert^{1/2} \leq \exp( (l+1)^8 \exp(10^{22} 10^{27n} (d!)^{13} m^4)).
	\end{align}

Let $L'$ be the normal closure of the extension $L(x_1, \ldots, x_n) / L$.
Then
\begin{align*}
	[L' : \Q] &\leq [L : \Q] \prod_{i=1}^n [\Q(\sqrt{\Delta_i}, x_i) : \Q] \nonumber\\ 
	&\leq d! (2 \max_i \lvert \Delta_i \rvert^{2/3})^n\\
	&\leq 2^n d! \exp\left( \frac{3n}{2}(l+1)^8 \exp(10^{22} 10^{27n} (d!)^{13} m^4)\right) \\
	&\leq \exp((l+1)^8 \exp(10^{23} 10^{27n} (d!)^{13} m ^4))
	\end{align*}
by Proposition~\ref{prop:classup} and \eqref{eq:MannBdSing}.
Let $c' = - c - a_1 x_1^m - \ldots - a_n x_n^m \in L'$.
So
\begin{align}\label{eq:roots}
	 c' = b_1 \zeta_1 + \ldots + b_l \zeta_l
	 \end{align}
by \eqref{eq:Mann}.
Note $c' \neq 0$, since no proper sub-sum of $b_1 \zeta_1 + \ldots + b_l \zeta_l + c$ vanishes.
Proposition~\ref{prop:Evertse} applied to \eqref{eq:roots} then implies that
\begin{align*}
	 \max_{1 \leq j \leq l} N_j &\leq 2 (l+1)^{6(l+1)^2} \exp((l+1)^8 \exp(10^{23} 10^{27n} (d!)^{13} m ^4))^2\\
	 &\leq \exp((l+1)^8 \exp(10^{24} 10^{27n} (d!)^{13} m^4)) . \qedhere
	 \end{align*}
\end{proof}


\begin{thebibliography}{ABPM15}
\bibitem[ABPM15]{AllombertBiluMadariaga15}
B.~Allombert, Yu. Bilu, and A.~Pizarro-Madariaga, \emph{C{M}-points on straight
	lines}, Analytic number theory, Springer, Cham, 2015, pp.~1--18.

\bibitem[And98]{Andre98}
Y.~Andr\'{e}, \emph{Finitude des couples d'invariants modulaires singuliers sur
	une courbe alg\'{e}brique plane non modulaire}, J. Reine Angew. Math.
\textbf{505} (1998), 203--208.

\bibitem[BG06]{BombieriGubler06}
E.~Bombieri and W.~Gubler, \emph{Heights in {D}iophantine geometry}, New
Mathematical Monographs, vol.~4, Cambridge University Press, Cambridge, 2006.

\bibitem[BGT26]{BiluGunTron26}
Yu. Bilu, S.~Gun, and E.~Tron, \emph{Effective multiplicative independence of
	three singular moduli}, Algebra Number Theory \textbf{20} (2026), no.~6,
1073--1123.

\bibitem[BHK20]{BiluHabeggerKuhne18}
Yu. Bilu, P.~Habegger, and L.~K\"{u}hne, \emph{No singular modulus is a unit},
Int. Math. Res. Not. IMRN (2020), no.~24, 10005--10041.

\bibitem[Bin19]{Binyamini19a}
G.~Binyamini, \emph{Density of algebraic points on {N}oetherian varieties},
Geom. Funct. Anal. \textbf{29} (2019), no.~1, 72--118.

\bibitem[Bin20]{Binyamini19}
G.~Binyamini, \emph{Some effective estimates for {A}ndr\'{e}--{O}ort in
	{$Y(1)^n$}}, J. Reine Angew. Math. \textbf{767} (2020), 17--35, with an
appendix by E. Kowalski.

\bibitem[Bin24]{Binyamini24}
G.~Binyamini, \emph{{L}og-{N}oetherian functions}, preprint,
ar{X}iv:2405.16963v1 (2024).

\bibitem[BJST26]{BinyaminiSchmidtJonesThomas26}
G.~Binyamini, G.~Jones, H.~Schmidt, and M.~Thomas, \emph{An effective
	{P}ila--{W}ilkie theorem for sets definable using {P}faffian functions, with
	some diophantine applications}, J. Eur. Math. Soc. (JEMS) (2026), published
online first.

\bibitem[BK20]{BiluKuhne20}
Yu. Bilu and L.~K\"{u}hne, \emph{Linear {E}quations in {S}ingular {M}oduli},
Int. Math. Res. Not. IMRN (2020), no.~21, 7617--7643.

\bibitem[BLM17]{BiluLucaMasser17}
Yu. Bilu, F.~Luca, and D.~Masser, \emph{Collinear {CM}-points}, Algebra \&
Number Theory \textbf{11} (2017), no.~5, 1047--1087.

\bibitem[BLPM16]{BiluLucaMadariaga16}
Yu. Bilu, F.~Luca, and A.~Pizarro-Madariaga, \emph{Rational products of
	singular moduli}, J. Number Theory \textbf{158} (2016), 397--410.

\bibitem[BMZ13]{BiluMasserZannier13}
Yu. Bilu, D.~Masser, and U.~Zannier, \emph{An effective ``theorem of
	{A}ndr\'{e}'' for {CM}-points on a plane curve}, Math. Proc. Cambridge
Philos. Soc. \textbf{154} (2013), no.~1, 145--152.

\bibitem[Cox22]{Cox22}
D.~Cox, \emph{Primes of the form {$x^2+ny^2$}---{F}ermat, class field theory,
	and complex multiplication}, third ed., AMS Chelsea Publishing, Providence,
RI, 2022, with contributions by R. Lipsett.

\bibitem[DZ00]{DvornicichZannier00}
R.~Dvornicich and U.~Zannier, \emph{On sums of roots of unity}, Monatsh. Math.
\textbf{129} (2000), no.~2, 97--108.

\bibitem[Eve99]{Evertse99}
J.~Evertse, \emph{The number of solutions of linear equations in roots of
	unity}, Acta Arith. \textbf{89} (1999), no.~1, 45--51.

\bibitem[Fow20]{Fowler20}
G.~Fowler, \emph{Triples of singular moduli with rational product}, Int. J.
Number Theory \textbf{16} (2020), no.~10, 2149--2166.

\bibitem[Fow23]{Fowler23}
G.~Fowler, \emph{Equations in three singular moduli: the equal exponent case},
J. Number Theory \textbf{243} (2023), 256--297.

\bibitem[Fow26]{Fowler26}
G.~Fowler, \emph{Some uniform effective results on {A}ndr{\'e}--{O}ort for sums
	of powers in {$\mathbb{C}^n$}}, Math. Proc. Cambridge Philos. Soc.
\textbf{180} (2026), no.~3, 607--641.

\bibitem[Gol76]{Goldfeld76}
D.~Goldfeld, \emph{The class number of quadratic fields and the conjectures of
	{B}irch and {S}winnerton-{D}yer}, Ann. Scuola Norm. Sup. Pisa Cl. Sci. (4)
\textbf{3} (1976), no.~4, 623--663.

\bibitem[GZ86]{GrossZagier86}
B.~Gross and D.~Zagier, \emph{Heegner points and derivatives of {$L$}-series},
Invent. Math. \textbf{84} (1986), no.~2, 225--320.

\bibitem[HW08]{HardyWright08}
G.~Hardy and E.~Wright, \emph{An introduction to the theory of numbers}, sixth
ed., Oxford University Press, Oxford, 2008, Revised by D. R. Heath-Brown and
J. H. Silverman, With a foreword by A. Wiles.

\bibitem[K{\"{u}}h12]{Kuhne12}
L.~K{\"{u}}hne, \emph{An effective result of {A}ndr\'{e}--{O}ort type}, Ann. of
Math. (2) \textbf{176} (2012), no.~1, 651--671.

\bibitem[K{\"{u}}h13]{Kuhne13}
L.~K{\"{u}}hne, \emph{An effective result of {A}ndr\'{e}--{O}ort type {II}},
Acta Arith. \textbf{161} (2013), no.~1, 1--19.

\bibitem[K{\"{u}}h21]{Kuhne21}
L.~K{\"{u}}hne, \emph{Intersection of class fields}, Acta Arith. \textbf{198}
(2021), no.~2, 109--127.

\bibitem[Lan35]{Landau35}
E.~Landau, \emph{Bemerkungen zum {Heilbronnschen} {Satz}}, Acta Arith.
\textbf{1} (1935), 1--18.

\bibitem[Lan02]{Lang02}
S.~Lang, \emph{Algebra}, third ed., Graduate Texts in Mathematics, vol. 211,
Springer-Verlag, New York, 2002.

\bibitem[Lau84]{Laurent84}
M.~Laurent, \emph{\'{E}quations diophantiennes exponentielles}, Invent. Math.
\textbf{78} (1984), no.~2, 299--327.

\bibitem[Li21]{Li21}
Y.~Li, \emph{Singular units and isogenies between {CM} elliptic curves},
Compos. Math. \textbf{157} (2021), no.~5, 1022--1035.

\bibitem[LR19]{LucaRiffaut19}
F.~Luca and A.~Riffaut, \emph{Linear independence of powers of singular moduli
	of degree three}, Bull. Aust. Math. Soc. \textbf{99} (2019), no.~1, 42--50.

\bibitem[Man65]{Mann65}
H.~Mann, \emph{On linear relations between roots of unity}, Mathematika
\textbf{12} (1965), 107--117.

\bibitem[Mar19]{Martinez19}
C.~Mart{\'i}nez, \emph{The number of maximal torsion cosets in subvarieties of
	tori}, J. Reine Angew. Math. \textbf{755} (2019), 103--126.

\bibitem[Pap26]{Papas26}
G.~Papas, \emph{Effective {B}rauer--{S}iegel on some curves in {$Y(1)^n$}},
Math. Ann. \textbf{394} (2026), no.~4, 84.

\bibitem[Pau15]{Paulin15}
R.~Paulin, \emph{An explicit {A}ndr\'e-{O}ort type result for
	{$\mathbb{P}^1(\mathbb{C})\times\mathbb{G}_m(\mathbb{C})$}}, Math. Proc.
Cambridge Philos. Soc. \textbf{159} (2015), no.~1, 153--163.

\bibitem[Pau16]{Paulin16}
R.~Paulin, \emph{An explicit {A}ndr\'{e}--{O}ort type result for
	{$\mathbb{P}^1(\mathbb{C})\times \mathbb{G}_\mathrm{m}(\mathbb{C})$} based on
	logarithmic forms}, Publ. Math. Debrecen \textbf{88} (2016), no.~1-2, 21--33.

\bibitem[Pil11]{Pila11}
J.~Pila, \emph{O-minimality and the {A}ndr\'e--{O}ort conjecture for {$\mathbb{
			C}^n$}}, Ann. of Math. (2) \textbf{173} (2011), no.~3, 1779--1840.

\bibitem[Pil14]{Pila14a}
J.~Pila, \emph{Special point problems with elliptic modular surfaces},
Mathematika \textbf{60} (2014), no.~1, 1--31.

\bibitem[Pil22]{Pila22}
J.~Pila, \emph{Point-counting and the {Z}ilber--{P}ink conjecture}, Cambridge
Tracts in Mathematics, vol. 228, Cambridge University Press, Cambridge, 2022.

\bibitem[PST21]{PilaShankarTsimerman21}
J.~Pila, A.~Shankar, and J.~Tsimerman, \emph{Canonical heights on {S}himura
	varieties and the {A}ndr{\'e}--{O}ort conjecture}, with an appendix by
H.~Esnault and M.~Groechenig, preprint arXiv:2109.08788v4 (2021).

\bibitem[PW06]{PilaWilkie06}
J.~Pila and A.~Wilkie, \emph{The rational points of a definable set}, Duke
Math. J. \textbf{133} (2006), no.~3, 591--616.

\bibitem[PZ08]{PilaZannier08}
J.~Pila and U.~Zannier, \emph{Rational points in periodic analytic sets and the
	{M}anin--{M}umford conjecture}, Atti Accad. Naz. Lincei Rend. Lincei Mat.
Appl. \textbf{19} (2008), no.~2, 149--162.

\bibitem[Rif19]{Riffaut19}
A.~Riffaut, \emph{Equations with powers of singular moduli}, Int. J. Number
Theory \textbf{15} (2019), no.~3, 445--468.

\bibitem[Sca04]{Scanlon04}
T.~Scanlon, \emph{Automatic uniformity}, Int. Math. Res. Not. (2004), no.~62,
3317--3326.

\bibitem[Sch88]{Schinzel88}
A.~Schinzel, \emph{Reducibility of lacunary polynomials. {VIII}}, Acta Arith.
\textbf{50} (1988), no.~1, 91--106.

\bibitem[Sch96]{Schlickewei96}
H.~Schlickewei, \emph{Equations in roots of unity}, Acta Arith. \textbf{76}
(1996), no.~2, 99--108.

\bibitem[Sie35]{Siegel35}
C.~Siegel, \emph{{\"U}ber die {Classenzahl} quadratischer {Zahlk{\"o}rper}},
Acta Arith. \textbf{1} (1935), 83--86.

\bibitem[Tat51]{Tatuzawa51}
T.~Tatuzawa, \emph{On a theorem of {S}iegel}, Jpn. J. Math. \textbf{21} (1951),
163--178.

\bibitem[W{\"u}s14]{Wustholz14}
G.~W{\"u}stholz, \emph{A note on the conjectures of {A}ndr\'e-{O}ort and
	{P}ink}, Bull. Inst. Math. Acad. Sin. (N.S.) \textbf{9} (2014), no.~4,
735--779, with an appendix by {L.} {K}\"uhne.

\bibitem[Zag08]{Zagier08}
D.~Zagier, \emph{Elliptic modular forms and their applications}, The 1-2-3 of
modular forms, Universitext, Springer, Berlin, 2008, pp.~1--103.
		
	\end{thebibliography}

\end{document}